\documentclass[11pt,a4paper]{article}
%\documentclass[12pt]{article}
% increased the fonts to make editing from screen easier -- S.
\linespread{1.1}
\usepackage{amssymb}
\usepackage{amsfonts}
\usepackage{amsthm}
\usepackage{amsmath}
\usepackage{bm}	
\usepackage{bbm}
\usepackage{hyperref}
\usepackage{enumerate}
\usepackage{color}
\usepackage[
    backend=bibtex, 
    natbib=true,
    maxbibnames=99,
    style=alphabetic,
    sorting=nyt,
]{biblatex}
\usepackage{verbatim}
\usepackage[cal=boondoxo]{mathalfa} % introduced for calligraphic lowercase "o"
\usepackage[normalem]{ulem}

\addbibresource{susydospl.bib}

\addtolength{\hoffset}{-1.25cm}
\addtolength{\textwidth}{2.25cm}

\numberwithin{equation}{section}

\allowdisplaybreaks

\newcommand{\ud}{\,\mathrm{d}}

\newcommand{\mc}[1]{\mathcal{#1}}
\newcommand{\mf}[1]{\mathfrak{#1}}
\newcommand{\half}[0]{\frac{1}{2}}

\newcommand{\ol}[1]{\overline{#1}}

\newcommand{\mb}[1]{\mathbb{#1}}

\newcommand{\const}{\mathrm{const\,}}
\newcommand{\eps}{\varepsilon}
\newcommand{\str}{\operatorname{Str}\,}
\newcommand{\sdet}{\operatorname{Sdet}\,}
\newcommand{\Ber}{\operatorname{Ber}}

\newcommand\tr{\operatorname{Tr}}

\newcommand\supp{\operatorname{supp}}

\theoremstyle{plain}
\newtheorem{thm}{Theorem}
\newtheorem{defin}{Definition}[section]
\newtheorem{prop}[defin]{Proposition}
\newtheorem{cor}[defin]{Corollary}
\newtheorem{lemma}[defin]{Lemma}
\newtheorem*{thm*}{Theorem}

\theoremstyle{remark}
\newtheorem{rem}[defin]{Remark}

%%%%%%%%%%%%%%%%%%%%%%%%%%%%%%%%%%%%%%%%%%%%%%%%%%%%%%%%%%%%%% Comments

\begin{document}

\title{\bf The density of states of 1D random band matrices via a supersymmetric transfer operator}
\author{ Margherita Disertori\textsuperscript{1}, 
Martin Lohmann\textsuperscript{2}, 
Sasha Sodin\textsuperscript{3}
}

%\vspace{3.truecm}
\date{\today}

\maketitle

\footnotetext[1]{Institute for Applied Mathematics \& Hausdorff Center for Mathematics, 
University of Bonn,
Endenicher Allee 60,
D-53115 Bonn, Germany.
E-mail: disertori@iam.uni-bonn.de.}
\footnotetext[2]{
Department of Mathematics, University of British Columbia, Vancouver, BC, Canada V6T 1Z2. 
\mbox{E-mail}: marlohmann@math.ubc.ca.}
\footnotetext[3]{School of Mathematical Sciences, Queen Mary University of London, London E1 4NS, United Kingdom \& School of Mathematical Sciences, Tel Aviv University, Tel Aviv, 69978, Israel. 
Email: a.sodin@qmul.ac.uk.}

\begin{abstract}
Recently, T.\ and M.\ Shcherbina proved a pointwise semicircle law for the density of
states of one-dimensional Gaussian band matrices of large bandwidth. The main step 
of their proof is a new method to study the spectral properties of non-self-adjoint 
operators in the semiclassical regime. The method is applied to a transfer operator 
constructed from the supersymmetric integral representation for the density
of states.

We present a simpler proof of a slightly upgraded version of the semicircle law, 
which requires only standard semiclassical arguments and some peculiar 
elementary computations. The simplification is due to the use of supersymmetry, 
which manifests itself in the commutation between the transfer operator and a family of transformations of
superspace, and was applied earlier in the context of band matrices by Constantinescu. Other versions of this supersymmetry
have been a crucial ingredient in the study of the localization--delocalization transition by theoretical physicists.

 \end{abstract}

\tableofcontents

\section{Introduction}

\paragraph{Band operators and band matrices} Random band operators are popular toy models of disordered systems in theoretical 
physics. Their properties depend on a large parameter, called the bandwidth and 
denoted $W$. Informally, the large matrix elements lie in a band of width $W$ about
the main diagonal. One natural example is an Hermitian random operator $H$, represented
by the biinfinite Gaussian random matrix with covariance
\begin{equation}\label{eq:band1} \mathbb{E} H_{x, y} \overline{H_{x', y'}} = 
\frac{1}{W} \delta_{x, x'} \delta_{y, y'} \mathbbm{1}_{|x - y| \leq W}~,\quad
x,y \in \mathbb{Z}~.
\end{equation}
In this paper we mostly focus on a different example, the Hermitian Gaussian operator
with covariance
\begin{equation}\label{eq:band2} \mathbb{E} H_{x, y} \overline{H_{x', y'}} = 
  \delta_{x, x'} \delta_{y, y'} (-W^2 \Delta + \mathbbm{1})^{-1}_{xy}~, 
\end{equation}
where $\Delta$ is the one-dimensional discrete Laplacian and $\mathbbm{1}$
is the biinfinte identity matrix. 

Along with the infinite-volume operators $H$, one considers their finite-volume 
versions $H_N$ of dimension $N \times N$, called random band matrices. Again, we single out  Gaussian band matrices, and among them -- those with the covariances
\begin{equation}\label{eq:band1'} \mathbb{E} H_{x, y} \overline{H_{x', y'}} = 
\frac{1}{W} \delta_{x, x'} \delta_{y, y'} \mathbbm{1}_{|x - y| \leq W}~. 
\end{equation}
and
\begin{equation}\label{eq:band2'} \mathbb{E} H_{x, y} \overline{H_{x', y'}} = 
\delta_{x, x'} \delta_{y, y'} (-W^2 \Delta_N + \mathbbm{1}_N)^{-1}_{xy}~, 
\end{equation}
where 
\[ \Delta_N = \left(\begin{array}{cccccc}
-1 & 1 & 0 & \cdots & 0 & 0 \\
1 &  -2 & 1 & \cdots & 0 & 0\\
0 & 1 & -2 & \cdots & 0 & 0\\
\cdots &\cdots&\cdots & \cdots&\cdots&\cdots \\
0 & 0 & 0 &\cdots&1&-1
\end{array}\right) \]
is the Neumann Laplacian on $\{1, \cdots, N\}$. We regard (\ref{eq:band1'}) and (\ref{eq:band2'}) as finite-volume versions
of (\ref{eq:band1}) and (\ref{eq:band2}), respectively.

\paragraph{Localization length}
By the general theory of one-dimensional random operators \cite{PF}, finite-range band operators (including  (\ref{eq:band1}) ) exhibit localization for
any value of $W$, manifesting itself in pure point spectrum with exponentially
decaying eigenfunctions. The rate of exponential decay of the eigenfunctions
is known as the localization length and denoted $L_{\operatorname{loc}}$.
An essentially equivalent quantity is the minimal value of $N$ such that
$9/10$ of the $\ell_2$ mass of the eigenvectors is concentrated in a 
subinterval of length $N/10$. Anderson localization also occurs for long-range random operators with sufficiently fast decay of the off-diagonal elements (such as \ref{eq:band2});  see \cite{jakvsic-molchanov}.

A long-standing question is to determine the asymptotic dependence
of the localization length on the bandwidth. 
It is widely believed that $L_{\operatorname{loc}}$ scales as $W^2$ for large $W$,
for eigenvectors corresponding to energies $|E| < 2$.
This belief is supported by various convincing albeit not mathematically rigorous
arguments \cite{Casati:1990p3447,Casati:1993p35,FM1,FM2}; see further below.

On the rigorous side, Schenker proved \cite{schenker2009eigenvector} that $L_{\operatorname{loc}} \leq C  W^8$ for a class of band matrices
including (\ref{eq:band1}). His argument is reminiscent of 
the Mermin--Wagner theorem in statistical mechanics. A combination of 
the result of  \cite{schenker2009eigenvector}  with the recent Wegner estimate from \cite{PSSS} yields a 
slight improvement  $L_{\operatorname{loc}} \leq C W^7$. 

{ As to lower bounds, the results of \cite{ErdKn1,ErdKn2} pertaining to the quantum evolution imply a weak delocalization result for $W \gg N^{\frac67}$. A  stronger form of delocalization was established in \cite{erdHos2013delocalization} for $W\gg N^{\frac 45}$, and in \cite{he2018diffusion}, the constraint was relaxed to $W \gg N^{\frac79}$.   A genuine delocalization result was obtained for $W\gg N^{\frac 67}$ by Bao and Erd\H{o}s  \cite{bao2015delocalization}. The argument of  \cite{bao2015delocalization} combines the methods developed 
for the proof of universality for Wigner matrices ($W\sim N$, see \cite{erdos2012universality} for a review, as well as the monograph \cite{erdosyau}) with a supersymmetric analysis incorporating  superbosonization formulas \cite{bunder2007superbosonization} and the asymptotic method of \cite{ShchBlocks}. In the recent series of works \cite{bourgade2018random,bourgade2018random2,yang2018random} delocalization and the convergence of local eigenvalue statistics in the bulk to the sine process was established for $W \gg N^{\frac34}$; see  \cite{bourgade2018random3} for a review.}

We mention that band matrices and band operators admit a generalization
to higher spatial dimension.  If the spatial structure of the band is $d$ dimensional with $d\geq 3$, they are expected to exhibit an Anderson-type spectral phase transition similar to the conjectural metal-insulator transition in realistic solid state models. The dimension
$d = 2$ is critical, and localization is expected for all values of the band width. See further the review \cite{Sp_banded}.

\paragraph{Density of states}
The behaviour of the eigenvectors of $H$ is controlled by the quantity  $\langle \vert G_{xy}(E+i\eps)\vert^2\rangle $, where 
\[ G_{xy}(z) = G_{xy}[H](z) = (z- H)^{-1}_{xy} \]
 is the Green's function of the random matrix $H$,  $E \in \mathbb{R}$ is a spectral parameter, and $\langle \cdot \rangle$ denotes averaging over the disorder. In infinite volume, $\epsilon$ should be sent to zero, while $W$ is large but fixed. In finite volume, $\epsilon$
should be taken of order $1/N$. The quantity  $\langle \vert G_{xy}(E+i\eps)\vert^2\rangle $ also controls the properties
of the  (quenched) spectral measure $\mu_H$ of $H$, which is defined by
\[ G_{00}(z) = \int (z-\lambda)^{-1} d\mu_H(\lambda)~, \quad z \in \mathbb{C} \setminus \mathbb{H}~.  \]

In this paper we focus on a simpler quantity, the average Green function   $\langle G_{xx}(E+i\eps)\rangle$, and the related quantity $\langle G_{xy}(E+i\eps)G_{yx}(E+i\eps)\rangle$. The former one controls
the behaviour of the average density of states $\rho =   \rho[H]$, which can be defined as the 
Radon derivative of the disorder average $\langle \mu_H \rangle$ of $\mu_H$. We also consider the
average density of states in finite volume, $\rho_N =  \rho[H_N]$.

For Gaussian
random band matrices including both (\ref{eq:band1}) and (\ref{eq:band2}), 
the density of states exists in both finite and infinite volume by a  general argument of Wegner 
 \cite{Wegner_DOS}. The average density of states is related to $\langle G_{00} \rangle$:
 \[\begin{split} 
 \int  (z-E)^{-1} \rho(E) dE &=  \langle G_{00}[H](z) \rangle \\
 \int  (z-E)^{-1} \rho_N(E) dE &= \frac{1}{N} \sum_{x=1}^N \langle G_{xx}[H_N](z) \rangle \\
\end{split}\]
and (consequently)
\begin{equation}\label{eq:defdos}
\begin{split} 
\rho(E) &=  -\frac 1{\pi } \lim_{\eps\searrow 0} \Im \, \big\langle G_{00}[H](E+i\eps)\big\rangle\\
   \rho_N(E) &=  -\frac 1{\pi N} \lim_{\eps\searrow 0} \Im \, \big\langle \tr G[H_N](E+i\eps)\big\rangle~.\end{split}
\end{equation}
 In finite volume, $\rho_N$ can be also written as
\[
\rho_N(E)  
= \partial_E \frac{1}{N} \langle \# \left\{ \text{eigenvalues of $H_N$ in $(-\infty, E]$} \right\} \rangle
\]
justifying its name. Also, one has: $\lim_{N \to \infty} \rho_N(E) = \rho(E)$ (see the proof of
Proposition~ \ref{prop:viaI}).
\medskip
It is believed that the average $\langle G_{xy}(E+i\eps) G_{yx}(E+i\eps) \rangle$ and the average density of states do not reflect the localization properties
of the eigenvectors and the spectral type of $H$: see e.g.\ \cite{Wegner_DOS}. Still, the former quantities are of intrinsic interest. 

In the works \cite{BMP, KMP}, it was proved that as $W \to \infty$ the densities
$\rho(\lambda)$ converge weakly to the semicircle density
\[ \rho_{\mathrm{s.c.}}(E) = \frac{1}{2\pi} \sqrt{4-E^2} \,\, \mathbbm{1}_{|E|\leq2}~, \]
meaning that 
\begin{equation}\label{eq:tosc} \lim_{W \to \infty}\int \phi(E) \rho(E) dE = \int \phi(E) \rho_{\mathrm{s.c.}}(E) dE
\end{equation}
for any bounded continuous test function $\phi$. The arguments in these works apply to for a wide class of band matrices including (\ref{eq:band1}), (\ref{eq:band2}).

The available pointwise results are much less general, and we list them
below following a brief discussion of supersymmetry.

\paragraph{Supersymmetry} 
One of the powerful methods for the study of random operators is the supersymmetric formalism. First introduced by Wegner and  Sch\"afer and developed in the works of Efetov, it allows to rewrite the disorder averages of various
observables as high-dimensional superintegrals.  A general introduction
may be found in the monographs \cite{Wegbook,efetov1999supersymmetry}.

In the context of random band matrices, the supersymmetric approach was applied by Fyodorov and Mirlin \cite{FM1,FM2}, who confirmed the dependence of 
the localisation length on the bandwidth and also described the crossover
occurring as $W \asymp \sqrt{N}$. 
For Gaussian random band matrices, the average $\langle \vert G_{xy}(E+i\eps)\vert^2\rangle $ corresponds, through Berezin integration and superbosonization or certain formal versions of the Hubbard--Stratonovich transformation \cite{zirnbauer2004supersymmetry}, to a high dimensional super-integral dominated by a complicated saddle \emph{manifold}. The average density of states $\rho_N$ and
the average $\langle G_{xy}(E+i\eps) G_{yx}(E+i\eps) \rangle$ lead, in the same way, to an integral dominated by saddle \emph{points}. 

The four main steps in the works \cite{FM1,FM2} are the derivation of  the supersymmetric
integral representation; the $\sigma$-model approximation, in which  the integration domain is restricted to the saddle manifold; the continuum limit; and a semiclassical
analysis of the infinitesimal transfer operator. All these steps, and particularly the
last three, have so far not been put on firm mathematical ground. (See \cite{Shch2sigma} for recent progress on the second step.)

A version of the SUSY formalism that uses similar algebraic identities (Berezin integration), but simpler supersymmetries than those involved in superbosonization, had been used early on in rigorous investigations, e.g., of localization in $d=1$ random 
Schr\"odinger operators \cite{campanino1986supersymmetric,klein1986rigorous}.

\paragraph{Pointwise estimates}

The models (\ref{eq:band2}) and their counterparts in higher dimension are
especially convenient for supersymmetric analysis, since  the dual supersymmetric 
model  has nearest neighbour coupling (see Proposition~\ref{propalg}; in classical
statistical mechanics, the idea
of duality between carefully chosen long-range models and nearest neighbour
models goes back  to the work of Mark Kac \cite[Section 9]{kac1959partition}). 

This is why most of the pointwise results established to date pertain
to this class of operators. One exception is the upper bound 
\[ \rho(E) \leq C~, \quad \max_N \rho_N(E)  \leq C\] 
valid \cite{PSSS} for a reasonably wide class of Gaussian random band matrices including both (\ref{eq:band1}) and (\ref{eq:band2}), and their counterparts  in arbitrary dimension.

Much more is known for models of the form (\ref{eq:band2}). In dimension $d=3$, it was proved in \cite{disertori2002density}  that 
\[ \forall n \,\, \forall W \geq W_0(n) \,\, \forall |E| \leq 1.8\,\,\,\, \partial_E^n \rho(E)   \leq C_n \]
and that
\[ \forall |E| \leq 1.8 \,\,\,  \left|   \rho(E)   - \rho_{\mathrm{s.c.}}(E) \right| \leq C W^{-2}~.  \]
Corresponding results were also proved in finite volume. The argument of  \cite{disertori2002density}  relies on a cluster expansion  similar to the one used for the Wegner orbital model in \cite{constantinescu1987analyticity}.
Recently, a parallel result was also proved for $d=2$ in \cite{disertori2016density}.

In $d=1$, the cluster expansion methods of \cite{disertori2002density,disertori2016density} run into
difficulties. Instead, the method of transfer matrices can be used. While the method of transfer matrices has been successfully applied in the 
physical literature for this and more involved problems \cite{FM1,FM2}, a mathematical
justification is far from straightforward since the transfer matrix is not
self-adjoint. One possible strategy to perform semi-classical analysis for non-self-adjoint
operators was suggested in \cite{DS}; for now, this strategy was only implemented
for toy operators much simpler than the one considered here.

Recently, Mariya and Tatyana Shcherbina developed a different and very general 
 method of semi-classical analysis for  non-self-adjoint
operators \cite{shcherbina2016characteristic}. 
In the work \cite{shcherbina2016transfer}, they applied the method to the 
problem under discussion and proved
\begin{thm*}[\cite{shcherbina2016transfer}]
If $N\geq C W\log W $ and $\vert E\vert \leq \sqrt{\frac {32}9} $, then $\big\vert  \rho_N(E) - \rho_{sc}(E)\big\vert \leq \frac {C'}W$.
\end{thm*}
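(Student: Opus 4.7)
The plan is to follow the SUSY / transfer-operator strategy outlined in the introduction, invoking the Constantinescu-type supersymmetries as early as possible so that no genuinely hard spectral analysis of the non-self-adjoint operator $T$ is needed. Starting from \eqref{eq:defdos} for $\rho_N$ and the duality of Proposition~\ref{propalg}, I would first represent $\langle \tr G[H_N](E+i\eps)\rangle$ as a superintegral over $N$ supermatrix variables with \emph{nearest-neighbor} coupling; this locality is afforded precisely by the $(-W^2\Delta_N+\mathbbm{1}_N)^{-1}$ structure in \eqref{eq:band2'}. The integral rewrites as
\[
\big\langle \tr G[H_N](E+i\eps)\big\rangle = \sum_{x=1}^{N}\big\langle \psi_L,\, T^{x-1}\,A(E+i\eps)\, T^{N-x}\psi_R\big\rangle,
\]
where $T=T_{E+i\eps,W}$ is the transfer operator on functions of one supermatrix variable, $\psi_L,\psi_R$ are boundary states produced by the Neumann Laplacian, and $A$ is the observable insertion coming from the trace.

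Next I would carry out a semiclassical Laplace / WKB analysis of $T$, treating $1/W$ as the semiclassical parameter. For $|E|<2$ the action has a critical point on the super-analogue of the semicircle orbit, and I expect the constraint $|E|\leq\sqrt{32/9}$ to encode the quantitative regime where this saddle is nondegenerate and dominates by a uniform margin, so that a standard quadratic expansion produces a quasimode and a spectral gap of size $\gtrsim 1/W$. The supersymmetry then replaces the spectral perturbation argument: since $T$ commutes with a family of superspace transformations, a SUSY-invariant state is automatically an eigenvector, and for the correct normalization its eigenvalue is exactly $1$, which must therefore be the top eigenvalue. Both top left and right eigenvectors are thus obtained in closed form without any perturbative work on $T$ itself.

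The ``peculiar elementary computations'' promised in the abstract then amount to explicitly evaluating the boundary overlaps $\langle \psi_L,\psi_{\mathrm{top}}^R\rangle$, $\langle \psi_{\mathrm{top}}^L,\psi_R\rangle$, and the diagonal matrix element $\langle \psi_{\mathrm{top}}^L, A(E+i\eps)\,\psi_{\mathrm{top}}^R\rangle$. Because the relevant eigenvectors are SUSY-invariant, these integrals localize to low-dimensional ones, and after $\eps\searrow 0$ their combination should reproduce $\rho_{\mathrm{s.c.}}(E)$ to leading order. The remaining error decomposes into two pieces: the contribution of the subleading spectrum of $T$, bounded by $(1-\Delta)^{N-x}\lesssim e^{-cN/W}$ at each end and hence absorbed into $C'/W$ as soon as $N\geq CW\log W$ with $C$ large enough; and the higher-order Laplace corrections around the saddle, which are genuinely of size $1/W$ and directly yield the claimed bound. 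The main obstacle, which I would tackle last, is the quantitative uniform-in-$\eps$ control of the resolvent $(T-\lambda)^{-1}$ in a strip separating the SUSY-pinned eigenvalue $1$ from the rest of the spectrum. The whole point of the SUSY approach is that, with the top eigenvector explicit, this reduces to a standard perturbation around a harmonic-oscillator reference operator, avoiding the considerably more intricate characteristic-polynomial method of \cite{shcherbina2016transfer}.
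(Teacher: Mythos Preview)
Your overall architecture is close to the paper's, but there is a concrete misconception that would derail the argument as written. You assert that ``since $T$ commutes with a family of superspace transformations, a SUSY-invariant state is automatically an eigenvector, and for the correct normalization its eigenvalue is exactly $1$'', and that top left and right eigenvectors are therefore ``obtained in closed form without any perturbative work''. This is not what the supersymmetry buys. Commutation with the superrotations $U_{\eta,\xi}$ only means that $\mf T$ preserves the subspace of rotation-invariant functions; it does not single out an eigenvector within that (infinite-dimensional) subspace. What SUSY actually provides is (i) a reduction of $\mf T$ to an operator $\mc T$ on ordinary functions of two real variables $\lambda=(\lambda_1,\lambda_2)$ via the polar decomposition of the supermatrix (Proposition~\ref{propsusy}), and (ii) the localization identity $(\mc T f)(0)=f(0)$, which pins the top eigenvalue to $1$ and the value $u(0)=1$. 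The eigenfunction itself still has to be \emph{constructed}: the paper writes down a WKB ansatz $u_0^{(M)}(\lambda)=e^{-\alpha W\lambda^2}(1+Wq^{(3)}+\cdots)$ with $\alpha^2=\tfrac14(1-\mc E^2)$, checks that $v^{(M)}=\mc T u_0^{(M)}-u_0^{(M)}$ is small in $L_p$, and corrects to the exact $u$ by the geometric series $u=u_0+\sum_{n\ge 0}T^n v$ (Propositions~\ref{prop:approx} and~\ref{recon}). This is exactly the ``perturbative work'' you hoped to skip, and it is where the parameter $\alpha$ and the saddle structure enter.

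A second difference: the paper never works with a spectral or resolvent decomposition of $T$, and in particular needs no bound on $(T-\lambda)^{-1}$ uniform in $\eps$. After translating the contour to the saddle the limit $\eps\searrow 0$ is taken at once; the sole spectral input is the elementary $L_p$ contraction $\|e^{-V/2}\delta_{W^2}^\star e^{-V/2}\|_p\le 1-c/W$ for $|E|\le\sqrt{32/9}$ (Proposition~\ref{specbound}), proved by IMS localization near the two nondegenerate minima of $\Re V$ and comparison with the harmonic Kac operator (this is also where the threshold $\sqrt{32/9}$ comes from: beyond it $\Re V$ acquires extra minima). The decomposition actually used is $e^{-V}=u+(e^{-V}-u)$; since the second piece vanishes at $0$, the contraction applies directly to $T^n(e^{-V}-u)$, and the remaining term $I_1=(2\pi)^{-1}\int \Lambda^{-1}e^{V}u^2$ is shown to be small by exploiting the approximate parity $\lambda\mapsto -\lambda$ of $u_0^{(M)}$. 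Your left/right-eigenvector and boundary-overlap picture can be made to work, but only after $u$ has been built; without that construction the proposal has a genuine gap.
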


The main goal of the current paper is to provide an alternative and arguably simpler proof for the result of \cite{shcherbina2016transfer}. Most of the work in \cite{shcherbina2016transfer} is devoted to proving the spectral gap for the transfer operator and some properties of its top eigenfunction (Theorem 4.2 therein). The argument is based on methods developed in the non-supersymmetric framework in \cite{shcherbina2016characteristic}. Our proof, by contrast, exploits the supersymmetry of the problem. Apart from important simplification and reorganization of the algebra, supersymmetry manifestly fixes the top eigenvalue to $1$, implies a simple equation for the top eigenfunction, and reduces the proof of the spectral gap to an elementary spectral bound.  We also emphasize that, similarly to the classical applications of transfer 
operators in statistical mechanics, the original problem with two large parameters, $N$
and $W$, is reduced to a bunch of asymptotic problems containing only
one parameter.

Our method is non-applicable to problems lacking
supersymmetry, such as the correlation length of characteristic polynomials 
considered in \cite{shcherbina2016characteristic},  but has the advantage of simplicity.
It also allows to obtain stronger results, summarized in the theorems below. 
\begin{thm}\label{thm} For  $\vert E\vert < 2$ and $N\geq C(E) W\log W$  
 \begin{equation}\label{eq:thm1}
 \big\vert  \rho_N(E) - \rho(E)\big\vert \leq \frac {C(E)}{N} ~. 
 \end{equation}
 The  infinite volume density is well approximated by the semi-circle law:
\begin{equation}\label{eq:distfromsc}  
\big\vert  \rho(E) - \rho_{\mathrm{s.c.}}(E) \big\vert \leq \frac {C(E)}{W^{2}}~.
\end{equation}
\end{thm}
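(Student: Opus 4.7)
The plan is to combine the supersymmetric representation of Proposition~\ref{propalg} with transfer-operator analysis. Because the covariance in (\ref{eq:band2'}) is built from the Laplacian, the dual SUSY model has strict nearest-neighbour interactions, and both densities can be written in terms of a non-self-adjoint transfer operator $T=T_E$ acting on functions of a single superspace variable, schematically
\[
\rho_N(E) = \frac{1}{N}\sum_{x=1}^N \bigl\langle b_-,\, T^{\,x-1}\,B\,T^{\,N-x}\, b_+ \bigr\rangle,
\qquad
\rho(E) = \bigl\langle \psi_0^\ast,\, B\, \psi_0 \bigr\rangle,
\]
where $B$ is a bulk insertion capturing the relevant Green function element, $b_\pm$ are boundary vectors, and $\psi_0$ (resp.\ $\psi_0^\ast$) is the right (resp.\ left) top eigenfunction of $T$.

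The key point is that supersymmetry, via a family of superspace symmetries commuting with $T$ (as in Constantinescu's work mentioned in the abstract), fixes the top eigenvalue at $1$ and yields $\psi_0$ explicitly as the solution of a first-order linear equation. Having $\psi_0$ in closed form reduces the proof of (\ref{eq:distfromsc}) to an ordinary bosonic saddle-point integral which for $|E|<2$ has a unique non-degenerate critical point; the leading order reproduces $\rho_{\mathrm{s.c.}}(E)$, odd-order terms vanish by parity, and the first non-vanishing correction is of size $C(E)W^{-2}$.

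For (\ref{eq:thm1}) I would then establish a uniform spectral gap: the existence of $c(E)>0$, independent of $W$, such that $T$ restricted to the supersymmetric complement of $\psi_0$ has spectral radius at most $1-c(E)$. Given $\psi_0$ explicitly, this reduces to an elementary semi-classical bound comparing the conjugated operator to a Gaussian kernel at the saddle; this is the place where the simplification over \cite{shcherbina2016transfer} is most dramatic. With the gap in hand, the representation of $\rho_N(E)$ splits into a bulk part equal to $\rho(E)$ and boundary corrections bounded by $\sum_x (1-c(E))^{\min(x,N-x)} = O(c(E)^{-1})$, which upon division by $N$ yields $C(E)/N$. The condition $N \geq C(E)W\log W$ enters to absorb subleading errors in the semi-classical approximation of $T$ by its Gaussian model, including the contribution of spurious saddles outside the relevant region.

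The main obstacle will be the semi-classical gap estimate: although supersymmetry pins down the top mode, $T$ is still a non-normal operator on an infinite-dimensional superspace, and one has to control its full pseudospectrum rather than just individual eigenvalues. Ensuring that the quantitative gap $c(E)$ degenerates only at the correct rate as $|E|\to 2$, and that the subleading corrections in the saddle-point expansion underlying (\ref{eq:distfromsc}) indeed cancel at odd order so that the error is $W^{-2}$ rather than $W^{-1}$, will be the main technical points where supersymmetry must be exploited carefully rather than invoked abstractly.
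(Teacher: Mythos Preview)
Your overall architecture is right, but the central quantitative claim is wrong: the spectral gap of the transfer operator on the complement of the top eigenfunction is \emph{not} of order $1-c(E)$ independent of $W$. It is only $1-c(E)/W$; see Proposition~\ref{specbound}. This is unavoidable, since the kernel $\delta_{W^2}^\star$ converges to the identity as $W\to\infty$, so the operator cannot contract at a $W$-independent rate. With the correct gap, your boundary sum becomes
\[
\sum_{x=1}^N \bigl(1-\tfrac{c}{W}\bigr)^{\min(x,N-x)} = \mc O(W),
\]
which after dividing by $N$ gives only $\mc O(W/N)$, not $\mc O(1/N)$. The condition $N\ge C(E)W\log W$ is needed precisely because $(1-c/W)^N$ must be made small; it is not merely absorbing subleading saddle corrections.

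Recovering the stated bound $C(E)/N$ requires an additional factor of $W^{-1}$ in front of the decaying term, and this is not free: the paper obtains it (Proposition~\ref{prop:Iestimateimproved}) by showing that $\Lambda u$ is an approximate eigenfunction of $T$ with eigenvalue $\mu=1-2\alpha/W+\cdots$, together with a cancellation in the zeroth-order contribution (Proposition~\ref{prop:firstterm}). Your sketch does not anticipate this step.

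A second gap concerns the top eigenfunction. It is \emph{not} given in closed form by a first-order equation; supersymmetry fixes the eigenvalue to $1$ and guarantees $(\mc Tf)(0)=f(0)$, but $u$ itself is constructed by correcting a Gaussian ansatz via the geometric series $u=u_0+\sum_{n\ge 0}\mc T^n(\mc Tu_0-u_0)$ (Proposition~\ref{recon}). To reach the $W^{-2}$ accuracy in \eqref{eq:distfromsc} one must push the ansatz to fifth order in $W^{-1/2}$ (Proposition~\ref{prop:approx}, $u_0^{(5)}$); the parity cancellations you invoke are indeed present, but they act on this high-order approximation, not on an exact $\psi_0$.
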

\begin{rem}The estimate  (\ref{eq:thm1}) together with (\ref{eq:distfromsc}) yields
\begin{align}\label{finitevolsc}
\big\vert  \rho_N(E) - \rho_{\mathrm{s.c.}}(E)\big\vert \leq \frac {C(E)}{W \log W} ~,\quad N \geq C(E) W \log W~. 
\end{align}
Note that, for large band width $N\sim   W\log W $, the finite volume deviation (\ref{eq:thm1}) is substantially bigger than the accuracy (\ref{eq:distfromsc}) of the semicircle law.
\end{rem}

\begin{thm}\label{thm2}  Assume that $\vert E\vert < 2$. 
For $N \geq C(E) W \log W$ the following hold. 
\begin{enumerate}
\item For any $1 \leq y, y' \leq N$,
\begin{align}\label{eq:th2eq1}
&\vert\langle G_{yy'}[H_N](E) G_{y'y}[H_N](E)\rangle\vert \leq 
\frac{C(E)}{W} \exp\Big(-\frac{c(E)\vert y'-y\vert}{W}\Big)
\end{align}
\item The finite volume density  $\rho_N$ is smooth and for all $n\geq 1$
\begin{equation}\label{eq:th2eq2}
|\partial_E^{n} \rho_N(E)| \leq    C(E)^{n} n!\  W^{n-1}.
\end{equation}
\end{enumerate}
\end{thm}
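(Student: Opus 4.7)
Both parts of Theorem~\ref{thm2} rely on the supersymmetric transfer operator representation of the relevant correlators, together with the spectral data of $T=T(E)$ established earlier in the paper: supersymmetry pins the top eigenvalue to $\lambda_0=1$ with an explicit invariant eigenfunction $\psi_0$, while the remaining spectrum lies in a disk of radius $1-c(E)/W$. Writing $T=P_0+R$ with $P_0=|\psi_0\rangle\langle\psi_0|$, one has $\|R^k\|\leq e^{-c(E)k/W}$.

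For Part~(1), I would start from the SUSY integral representation (Proposition~\ref{propalg}) in the form
\[
\langle G_{yy'}(E)\,G_{y'y}(E)\rangle=\langle\phi_L,\,T^{y-1}AT^{|y'-y|-1}A'T^{N-y'}\,\phi_R\rangle,
\]
with $A,A'$ multiplication operators on superspace encoding the off-diagonal insertions $G_{yy'},G_{y'y}$. The decisive algebraic fact is that these insertions carry nontrivial charge under the internal $U(1)$-type symmetry preserved by $\psi_0$, so $\psi_0^*A\psi_0=\psi_0^*A'\psi_0=0$ and $P_0$ does not contribute to the central block. Decomposing $T=P_0+R$ there forces the middle factor to be $R^{|y'-y|-1}$, giving the decay $e^{-c(E)|y-y'|/W}$; the outer factors $T^{y-1}$ and $T^{N-y'}$, whose exponents are $\geq W\log W$ by hypothesis, are well approximated by $P_0$ up to negligible error. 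Combining with the insertion norm bound $\|A\|\,\|A'\|\leq C(E)/W$ inherited from the explicit form of the insertions yields (\ref{eq:th2eq1}).

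For Part~(2), rather than expanding the $(n{+}1)$-point Green's function correlators directly, I would exploit the analytic character of the SUSY integral. The integral representation defines a function $\tilde\rho_N(z)$ that extends $\rho_N(E)$ holomorphically to a complex neighborhood of the interval $(-2,2)$. The transfer operator $T(z)$, its top eigenfunction $\psi_0(z)$, the spectral gap, and the $U(1)$-algebraic identities used in the proof of Theorem~\ref{thm} all deform analytically in $z$ and survive on a disk $|z-E|\leq r:=c(E)/W$, uniformly for $E$ in compact subsets of $(-2,2)$. Hence the argument for Theorem~\ref{thm} transports to complex $E$ and gives $|\tilde\rho_N(z)-\rho_{\mathrm{s.c.}}(z)|\leq C(E)/W$ throughout this disk. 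Applying the Cauchy integral formula to $\tilde\rho_N-\rho_{\mathrm{s.c.}}$ on the circle $|z-E|=r$ produces
\[
\big|\partial_E^n(\tilde\rho_N-\rho_{\mathrm{s.c.}})(E)\big|\leq \frac{n!\,C(E)/W}{r^n}\leq C(E)^{n+1}n!\,W^{n-1},
\]
and combining with the trivial bound $|\partial_E^n\rho_{\mathrm{s.c.}}(E)|\lesssim n!$ on compact subsets of $(-2,2)$ yields (\ref{eq:th2eq2}).

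The principal obstacle is justifying the holomorphic extension used in Part~(2): the SUSY identities pinning the top eigenvalue are most transparent for real $E$, and one must verify that deforming $E$ by $O(1/W)$ into the complex plane moves $\lambda_0(z)$ only by $O(1)$ (preserving the gap) and preserves the $U(1)$-selection rule that underlies the vanishing used in Part~(1). Since the transfer operator depends analytically on $E$ and the $U(1)$-symmetry deforming the eigenfunction is genuine, this amounts to a quantitative application of analytic perturbation theory for the isolated simple eigenvalue $\lambda_0=1$; the small parameter $1/W$ for the allowed complex deformation is precisely what produces the $W^{n-1}$ factor in the final bound.
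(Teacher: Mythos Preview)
Your outline for Part~(1) has the right architecture---the insertion kills the top eigenspace in the middle block, leaving only the gapped part $R^{|y'-y|}$---but the mechanisms you cite are not the ones that actually operate. There is no $U(1)$ charge carried by the insertion; what happens in the paper's polar-coordinate reduction is that the insertion is multiplication by $\Lambda=\str R=\lambda_1-i\lambda_2$, and the spectral ``projection'' onto the top eigenspace is $f\mapsto f(0)\,u$. Since $(\Lambda u)(0)=0$, one has $\mc T^k(\Lambda u)=T^k(\Lambda u)$, and the full gap applies. This is a pointwise vanishing, not a symmetry selection rule. More seriously, the $1/W$ prefactor does \emph{not} come from $\|A\|\,\|A'\|$: the insertions are multiplication by functions of order one. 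The factor $1/W$ arises because the eigenfunction $u\approx e^{-\alpha W\lambda^2}$ concentrates on scale $W^{-1/2}$, so that $\|u\|_q=\mc O(W^{-1/q})$ and $\|e^V u\|_p=\mc O(W^{-1/p})$; their product in the H\"older estimate $|I_1(k)|\leq\|e^V u\|_p\,\|\Lambda^{-1}T^k\Lambda u\|_q$ gives $\mc O(W^{-1})$. Without this, your sketch yields only the exponential decay, not the correct amplitude.

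Your approach to Part~(2) is genuinely different from the paper's. The paper does not analytically continue: it expands $\partial_E^n\langle\tr G\rangle$ directly via Corollary~\ref{corfinalrep} into a combinatorial sum of integrals $I^{m,n_1,\dots,n_q}_{x,y_1,\dots,y_q}$ (each with $q$ insertions of $\Lambda^{n_j}$ separated by powers of $T$), bounds each by $W^{-1}C^n\prod_j\sqrt{n_j!}\,(1-c/W)^{y_{\max}-y_{\min}}$, and sums over positions and partitions to produce $C^n n!\,W^{n-1}$. Indeed, Remark~1.3 derives the analytic extension \emph{from} (\ref{eq:th2eq2}), the reverse of what you propose. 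Your Cauchy-formula route is attractive and could work, but it requires establishing the spectral gap $\|e^{-V/2}\delta_{W^2}^\star e^{-V/2}\|_p\leq 1-c/W$ for complex energies $E+i\eta$ with $|\eta|\leq c'/W$. This is not free: the proof of Proposition~\ref{specbound} uses $\Re V\geq 0$, and a perturbation of $E$ by $\eta$ shifts $\Re V$ by $\mc O(|\eta|\cdot|\lambda|)$, which could a priori move the operator norm above $1$. One expects the gap to survive for $|\eta|\ll c/W$, but this is an additional quantitative verification the paper does not carry out. (Your worry that $\lambda_0(z)$ might move is unfounded, however: the identity $(\mc T f)(0)=f(0)$ is purely algebraic and holds for all $E\in\mathbb C$, so the top eigenvalue is pinned at $1$ exactly.)
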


\begin{rem}
From \eqref{eq:thm1} and \eqref{eq:th2eq2} we obtain that the densities $\rho_N(E)$ and $\rho(E)$ admit an analytic extension to a domain of complex energies of the form
\[ \left\{ E + i \epsilon \, \mid \, |E| < 2~, \, |\epsilon| \leq c(E) / W \right\}~. \]
\end{rem}

\begin{rem}
Throughout the paper,  expressions of the form  $C(E)$, $C'(E)$, $c(E)$ are 
bounded from above and below on compact subintervals of $(-2, 2)$.
The values of $C(E)$, $C'(E)$, $c(E)$ and of the numerical constants 
$C, C', c$, et cet.\ 
may change from line to line.
\end{rem}

\paragraph{Plan of the paper} In Section \ref{alg}, we recall the representation of the mean density of states $\rho_N(E)$, as well as of the left-hand sides of (\ref{eq:th2eq1})--(\ref{eq:th2eq2}), via Berezin integration, and set up a supersymmetric transfer operator $\mf T$, (\ref{eq:Tsusydef}). 
Then we discuss a certain supersymmetry, that has already been described in \cite{constantinescu1988supersymmetric},
and is the supermatrix analogue of rotational symmetry and polar coordinates. This supersymmetry
allows to reduce the transfer operator (\ref{eq:Tsusydef}) to the much simpler one, $\mc T$ of (\ref{eq:defTf}), acting on functions of two real variables.  

In Section~\ref{ana} we analyse the transfer operator $\mc T$. The two main steps are (a) the construction of an approximate top eigenfunction $u_0$, $\mc T u_0 \approx u_0$, and (b) a bound on the restriction of $\mc T$ to a complement of the top eigenfunction. For a weakened version of Theorem~\ref{thm} (with a worse error term), it would suffice to use the simple ansatz
\[ u_0^{(0)}(\lambda) = \exp(-\alpha W (\lambda_1^2 + \lambda_2^2))\]
with an appropriately chosen $\alpha$. In order to prove the results as stated, we develop, in Section~\ref{seceigenfn}, a systematic construction (``supersymmetric WKB") which allows to find an approximate solution to an arbitrary order in $W^{-1/2}$ (for the purposes of the current paper, we use this construction up to order $5$). As for step (b), the key fact is Proposition~\ref{specbound}, proved via a  simple semiclassical argument. Corollary~\ref{cor:specbound} and Lemma~\ref{le:additionalprop} contain refinements needed to obtain the improved error term in Theorem~\ref{thm} and for Theorem~\ref{thm2}.

In Section \ref{prf}  we prove Theorems \ref{thm} and \ref{thm2}, for $|E| \leq \sqrt\frac{32}9$. The relatively simple Proposition~\ref{recon} would suffice to prove a  version of Theorem~\ref{thm} with  a worse error term; we work a bit more to prove the results as stated.

In Section~\ref{S:deform} we extend the argument to all $|E| < 2$, using a contour deformation and applying the general strategy of the Section~\ref{prf} to a power of the transfer operator (where the  power is chosen depending on the energy $E$).

With the exception of the Berezin integral representation (well explained elsewhere) and undergraduate
analysis, our proof is  self contained.

\section{Berezin algebra for the DOS and SUSY}\label{alg}

\subsection{Supersymmetric integral representation}

The Berezin integral representation of $ \rho_N(E)  $
involves the use of Grassmann variables. These will be denoted by the symbols $\rho,\ol\rho,\eta,\ol\eta $ and $\xi,\ol\xi $,
possibly with an index. All Grassmann variables anticommute with one another. The Grassmann algebra $\mathfrak G$
($\equiv $ free anticommutative) generated (over $ \mb C$) by all Grassmann variables admits a
$\mathbb{Z} / 2\mathbb{Z}$ grading, in which a monomial
in the generators is even or odd according to the parity of the number of symbols.

On the Grassmann  algebra  we define complex conjugation by $\rho\to\ol\rho,\ol\rho\to -\rho $ (etc.) on the generators,
and ordinary complex conjugation on the coefficients\footnote{In the terminology of \cite[\S 6.1]{Wegbook},
this is the conjugate of the second kind; see further the discussion
in \cite[\S 6.2]{Wegbook}.} . An element $f$ in the Grassmann algebra
generated by  the family $\{\rho_{x},\ol\rho_{x}\}_{x=1}^{N}$ has a unique decomposition  as
\begin{equation}\label{eq:grassfunc}
f=f (\{\rho_{x},\ol\rho_{x}\}_{x}) =\sum_{I,J\subset\{1,\dotsc ,N \} } c_{IJ}\  \rho_{I}\ol\rho_{J}
\end{equation}
where $c_{IJ}\in \mathbb{C},$ $ \rho_{I}=\prod_{i\in I}\rho_{i}$ and  $\ol\rho_{J}:=\prod_{j\in J} \ol\rho_{j}$
and the products are understood under some fixed arbitrary order on the set $\{1,\dotsc ,N \}.$ In particular, every  element can be
written as $f= f_{0}+n$ where $f_{0}\in \mathbb{C}$ and $n$ is nilpotent, i.e.\ there exists $k \geq 1$ such that $n^{k}=0.$
The Grassmann algebra has a natural decomposition into even and odd elements. 

Having a Grassmann algebra $\mathfrak G$ at hand, we consider the algebra of $C^\infty$- smooth functions $f: \mathbb C^m \to \mathfrak G$. A general element of this larger algebra has the form
\begin{equation}\label{eq:superfunc}
f (z_{1},\dotsc ,z_{m},\{\rho_{x},\ol\rho_{x}\}_{x \in \{1,\cdots,N\}})=
\sum_{I,J\subset\{1,\dotsc ,N \} } c_{IJ} (z_{1},\dotsc ,z_{m})\  \rho_{I}\ol\rho_{J}.
\end{equation}
Smooth multivariate functions $(z_{1},\dotsc ,z_{m})\mapsto f (z_{1},\dotsc ,z_{m})$ can be extended to functions of the form (\ref{eq:superfunc}) taking values
in the Grassmann algebra by replacing  the variables by some even elements of the  algebra $z_{i}\to z_{i}+n_{i}$, where $n_{i}$ is nilpotent,
and taking their Taylor series around $(z_{1},\dotsc ,z_{m})$ until it terminates by nilpotency. For example, for $m=1$ one has:
\[
f (z+n)= f (z)+ \sum_{j=1}^{k-1} \frac{f^{(j)} (z)}{j!} n^{j},\qquad n^{k}=0.
\]
The Berezin integral is the linear functional $\int $ that selects the top degree (in the generators)
coefficient in \eqref{eq:grassfunc} and multiplies it with $(2\pi)^{-\frac m2} $, where $m$ is the number of generators.
Thus, if $\rho,\ol\rho $ are the only generators,
\[
\begin{split}
\int \ud\ol\rho\ud\rho (a + b\,\rho + c\,\ol\rho + d \,\rho\ol\rho) = \frac{d}{2\pi}.
\end{split}
\]
where the differentials $\ud\rho,\ud\ol\rho$ anticommute with each other and with all Grasmann generators.

A $2m\times 2m $ supermatrix is a matrix  of the form
\[
\Big(\begin{smallmatrix} A & \Sigma  \\ \Gamma  & B  \end{smallmatrix}\Big),
\]
where $A,B$ (resp. $\Sigma,\Gamma$) are  $m\times m$ matrices whose matrix elements are even (respectively,  odd) elements of the
Grassmann algebra. 
The supertrace and superdeterminant of a supermatrix are defined as
\begin{align*}
\str \Big(\begin{smallmatrix} A & \Sigma  \\ \Gamma  & B \end{smallmatrix}\Big) &=\mbox{tr}A-\mbox{tr}B &
\sdet \Big(\begin{smallmatrix} A & \Sigma  \\ \Gamma  & B \end{smallmatrix}\Big)
&= \frac{1}{\det B}\det \big(A-\Sigma  B^{-1}\Gamma  ),
\end{align*}
where $\mbox{tr}$ and $\det$ are the ordinary trace and determinant.
We shall use the  symbol $R$ (possibly with an index) to denote
$2\times 2$ supermatrices of the form
\begin{equation}\label{eq:defR}
\begin{split}
R &= \Big(\begin{smallmatrix} a & \ol\rho \\ \rho & ib  \end{smallmatrix}\Big),
\end{split}
\end{equation}
where $a,b $ are  real numbers, and $\rho,\ol\rho $ are  generators of the
Grassmann algebra. By a superfunction $F$ of one or several variables $R$ we mean an expression of the form \eqref{eq:superfunc}
in which the $z$ variables are replaced by $a,b$.
See \cite{berezin2013introduction} for more details on these conventions and definitions.

\medskip\noindent We will use a supersymmetric representation for the average Green's
function, for which we will now introduce some notation.

Let $J:=-W^{2}\Delta_{N} +\mathbbm{1}$ the inverse of the covariance 
introduced in \eqref{eq:band2'}. Abbreviate $E_\eps = E+i\eps,\,\eps>0$. Let 
$\Gamma_a$ and $\Gamma_b$ be two horizontal lines in $\mb C$ oriented from
left to right, such that $\Gamma_a$ lies below the pole $E+i\eps\in\mb C$, 
{  while no additional constraint is needed for $\Gamma_{b}.$ }
For any suitable\footnote{It will be enough to consider
superfunctions whose coefficients $c_{IJ} (\{a_{x},b_{x} \}_{x \in \{1,\cdots,N\}})$ in the expansion \eqref{eq:superfunc}
are smooth functions of polynomial growth} superfunction 
$F= F\big((R_x)_{x=1}^{N}\big) $, introduce the SUSY average
\begin{align}\label{def:susymeasure}
\big\langle F\big\rangle_{SUSY} & =\int \prod_{x=1}^N \, \ud R_x \,  e^{-\half\str (R,JR) }
%{\scriptstyle
\prod_{x=1}^N \frac{1}{\sdet (E_{\eps}-R_x)}%}
F ((R_{x})_{x \in \{1, \cdots, N\}}) ,
\end{align}
where  $\str (R,JR) :=\sum_{xy}J_{xy}\str R_{x}R_{y}=\sum_{x}\str R_{x}^{2}+ W^{2} \sum_{x=1}^{N-1} \str (R_{x+1}-R_x)^2$
and  $E_{\eps}-R_x:=E_{\eps}\mathbbm{1}_{2}-R_x$ is a $2\times 2$ supermatrix.
Finally  $\ud R_x := \ud a_x\ud b_x\ud\ol\rho_x\ud\rho_x$, the integral in $a_x$
is taken along $\Gamma_a$, and the integral in $b_x$ is taken along $\Gamma_b$.
We will denote $\mathbf{a}= (a_{x})_{x=1}^{N}$ and $\mathbf{b}= (b_{x})_{x=1}^{N}.$

\begin{prop}\label{propalg}  In the above notation the following identities hold.
\begin{align}
1 = \big\langle1\big\rangle &= \big\langle  1\big\rangle_{SUSY} \label{eq:repr1} \\
\big\langle G_{yy}[H_N](E_\eps)\big\rangle &=
\big\langle (J\mathbf{a})_{y} \big\rangle_{SUSY} = \big\langle (iJ\mathbf{b})_{y} \big\rangle_{SUSY}
 \nonumber\\
&= \sum_{x} J_{yx} \big\langle a_{x} \big\rangle_{SUSY}=  \sum_{x} J_{yx} \big\langle (ib)_{x} \big\rangle_{SUSY}\label{eq:reprG} \\
\big\langle G_{yy'}[H_N](E_\eps)G_{y'y}[H_N](E_\eps)\big\rangle \!
 & = \! -J_{yy'} + \big\langle  (J\mathbf{a})_{y'}  [J(\mathbf{a}-i\mathbf{b})]_{y}   \big\rangle_{SUSY}\nonumber\\
&  = \! -J_{yy'} + \sum_{xx'} J_{yx} J_{y'x'}\big\langle  a_{x'} \str R_{x}   \big\rangle_{SUSY}
\label{eq:reprGG}\\
\partial_{E}^{n}\big\langle \tr G[H_N](E_\eps)\big\rangle &=N\delta_{n,1}+ (-1)^{n}
\big\langle  \big [\sum_{x} a_{x}\big ]\, \big  [\sum_{y} (a_{y}-ib_{y}) \big ]^{n}\big\rangle_{SUSY}\nonumber\\
&\hspace{-1cm}=N\delta_{n,1}+
 (-1)^{n}
\big\langle  \big  [\sum_{x} a_{x} \big ]\,  \big [\sum_{y} \str R_{y}  \big  ]^{n}\big\rangle_{SUSY},\quad n\geq 1.
\label{eq:reprGder}
\end{align}
\end{prop}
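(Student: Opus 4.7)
The plan is to derive all four identities from the standard Wegner--Efetov supersymmetric representation, using that the covariance (\ref{eq:band2'}) is dual (via a Hubbard--Stratonovich step) to the nearest-neighbor operator $J=-W^2\Delta_N+\mathbbm{1}_N$. For a fixed realization of $H$ I would introduce bosonic fields $\phi_x,\bar\phi_x$ and fermionic partners $\chi_x,\bar\chi_x$, and exploit the cancellation of the bosonic Gaussian $\det^{-1}(E_\eps-H)$ (whose contour is chosen using $\Im E_\eps>0$) against the fermionic Gaussian $\det(E_\eps-H)$ to obtain the seed identity $\langle 1\rangle=1$. Inserting $-i\phi_y\bar\phi_y$ (or $\chi_y\bar\chi_y$) into this representation reproduces $G_{yy}[H](E_\eps)$, and this bosonic/fermionic equivalence of the observable is the origin of the effective supersymmetry.

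Next I would average over $H$. The Gaussian integral of $e^{i\sum\bar\Psi H\Psi}$ produces a quartic term in $\Psi$ with kernel $J^{-1}_{xy}$, which I decouple by a Hubbard--Stratonovich step introducing the $2\times 2$ supermatrix fields $R_x$ with Gaussian weight $\exp(-\half\str(R,JR))$ in the \emph{dual} (nearest-neighbor) coupling $J$. The remaining integral over $\Psi$ is Gaussian and yields $\prod_x\sdet^{-1}(E_\eps-R_x)$, producing exactly the measure $\langle\cdot\rangle_{SUSY}$ of (\ref{def:susymeasure}) and establishing (\ref{eq:repr1}). For (\ref{eq:reprG}) I track the insertion $\phi_y\bar\phi_y$ through the averaging and HS step: the $J^{-1}$-weighted Wick contraction translates, after HS, into the sum $\sum_x J_{yx}a_x=(J\bd a)_y$, where $a_x$ is the bosonic diagonal of $R_x$; starting instead from $\chi_y\bar\chi_y$ gives $(iJ\bd b)_y$. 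The two representations coincide by the SUSY of the effective measure, and the last equalities in (\ref{eq:reprG}) follow from the symmetry of $J$.

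For (\ref{eq:reprGG}) I insert two observables of mixed type, e.g.\ $\phi_{y'}\bar\phi_y$ (bosonic) and $\chi_y\bar\chi_{y'}$ (fermionic), so that the pre-average Gaussian integral reproduces $G_{yy'}G_{y'y}$ with no unwanted Wick cross-contractions. Running the same averaging/HS procedure produces the term $\sum_{xx'}J_{yx}J_{y'x'}\langle a_{x'}\str R_x\rangle_{SUSY}$; the extra term $-J_{yy'}$ is the residual contact contribution from a single $H$-contraction joining the two insertions before HS is applied. Finally, (\ref{eq:reprGder}) I deduce by induction on $n$. For $n=1$, combining $\partial_E\tr G=-\tr G^2$, the sum of (\ref{eq:reprGG}) over $y,y'$, and the row-sum property $\sum_y J_{yx}=1$ (hence $\sum_{xy}J_{xy}=N$) gives
\[
\partial_E\langle\tr G\rangle=-\sum_{y,y'}\langle G_{yy'}G_{y'y}\rangle=N-\big\langle\big[\textstyle\sum_x a_x\big]\big[\sum_y\str R_y\big]\big\rangle_{SUSY};
\]
for higher $n$, I would either iterate $\partial_E$ together with an integration-by-parts in $R$ that converts the factor $\sum_z\str(E_\eps-R_z)^{-1}$ (which differentiating the superdeterminants produces) into $\sum_z\str R_z$ up to contact terms, or directly represent $\langle\tr G^{n+1}\rangle$ by an analogue of the argument for (\ref{eq:reprGG}) with $n+1$ cyclically chained mixed observables.

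The main obstacle I expect is bookkeeping: coordinating contour orientations and signs in the bosonic Gaussian, the Grassmann conventions used in (\ref{def:susymeasure}), and the precise way Wick's theorem produces both the $(J\bd a)_y$ factor and the contact terms $-J_{yy'}$ and $N\delta_{n,1}$. Conceptually the argument is routine supersymmetric algebra, but keeping the factors of $i$, signs, and the occurrences of $J$ versus $J^{-1}$ consistent is where the real work lies.
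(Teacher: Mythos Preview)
Your strategy is in the right spirit but differs from the paper's in an organizationally significant way, and your description of the key step contains an imprecision worth flagging.

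The paper does not track observables through the averaging/HS chain. Instead it works with a \emph{generating function} with site-dependent energies,
\[
\mathcal J(\mathbf E,\tilde{\mathbf E})=\Big\langle\frac{\det(\mathbf E+i\eps-H)}{\det(\tilde{\mathbf E}+i\eps-H)}\Big\rangle,
\]
derives the SUSY integral representation for $\mathcal J$ once (citing earlier work), and then obtains all four identities by differentiating in $E_y,\tilde E_y$ and setting $\mathbf E=\tilde{\mathbf E}=E\mathbbm 1$. The crucial mechanism is the pair of identities
\[
(\partial_{\tilde E_y}+\partial_{a_y})\sdet(\hat E_y+i\eps-R_y)=0,\qquad (\partial_{E_y}-i\partial_{b_y})\sdet(\hat E_y+i\eps-R_y)=0,
\]
which convert each $\partial_{\tilde E_y}$ or $\partial_{E_y}$ into a $\partial_{a_y}$ or $\partial_{b_y}$; a single integration by parts against $e^{-\frac12\str(R,JR)}$ then produces the factor $(J\mathbf a)_y$ or $(iJ\mathbf b)_y$. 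The contact terms $-J_{yy'}$ and $N\delta_{n,1}$ drop out automatically when a second integration by parts hits the previously produced factor, and the vanishing of $\langle[\sum_y\str R_y]^{m}\rangle_{SUSY}$ for $m\ge1$ (needed for (\ref{eq:reprGder})) is shown by a global translation $R\to R+\gamma\mathbbm 1_2$.

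In your outline, the phrase ``the $J^{-1}$-weighted Wick contraction translates, after HS, into $\sum_x J_{yx}a_x$'' is not accurate as stated: after HS and integrating out $\Psi$, the insertion $\phi_y\bar\phi_y$ becomes the bosonic entry of $(E_\eps-R_y)^{-1}$, not $(J\mathbf a)_y$. What actually produces the $J$ is the integration by parts in $a_y$ against the Gaussian $R$-weight --- a Ward identity, not a Wick contraction, and exactly the mechanism the paper isolates. You do invoke integration by parts later (for $n\ge2$), so you may well have the right idea, but without this step made explicit for (\ref{eq:reprG}) the argument has a gap. Your plan for general $n$ (iterating $\partial_E$ on the SUSY average and converting $\sum_z\str(E_\eps-R_z)^{-1}$ back into $\sum_z\str R_z$ by IBP) can be made to work, but the paper's generating-function route handles all the contact terms uniformly and sidesteps precisely the inductive bookkeeping you identify as the main obstacle.
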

\begin{proof}
Let $\mathbf{E}= \mbox{diag} (E_{1},\dotsc ,E_{N}),
\tilde{\mathbf{E}}=\mbox{diag} (\tilde{E}_{1},\dotsc \tilde{E}_{N})\in \mathbb{R}^{N\times N}$ and
consider the generating function
\[
\mathcal{J} (\mathbf{E}, \tilde{\mathbf{E}}):=
\big\langle \frac{\det (\mathbf{E}+i\eps\mathbbm{1}_{N} -H)}{\det (\tilde{\mathbf{E}}+i\eps\mathbbm{1}_{N} -H)}
\big\rangle
\]
Then $\mathcal{J} (E\mathbbm{1}_{N}, E\mathbbm{1}_{N})=1$ for all $E\in \mathbb{R}.$ Moreover
\begin{align*}
\big\langle G_{yy}[H_N](E_\eps)\big\rangle&= -\partial_{\tilde{E}_{y}}\mathcal{J} (E\mathbbm{1}_{N}, E\mathbbm{1}_{N})
= \partial_{E_{y}}\mathcal{J} (E\mathbbm{1}_{N}, E\mathbbm{1}_{N})\\
\big\langle G_{yy'}[H_N](E_\eps)G_{y'y}[H_N](E_\eps)\big\rangle &= \partial_{\tilde{E}_{y'}}
(\partial_{\tilde{E}_{y}}+\partial_{E_{y}})
\mathcal{J} (E\mathbbm{1}_{N}, E\mathbbm{1}_{N}),
\end{align*}
and setting $E,\tilde{E}\in \mathbb{R}$
\[
\partial_{E}^{n}\big\langle \tr G[H_N](E_\eps)\big\rangle= -\partial_{\tilde{E}}
(\partial_{E}+\partial_{\tilde{E}})^{n}
\mathcal{J} (E\mathbbm{1}_{N}, \tilde{E}\mathbbm{1}_{N})_{|E=\tilde{E}}.
\]
By the same arguments as in  \cite[Sect. 3]{disertori2002density} the generating function can be written as follows
\[
\mathcal{J} (\mathbf{E}, \tilde{\mathbf{E}})= \int \prod_{x=1}^N dR_{x}\,   e^{-\half\str (R,JR) }
%{\scriptstyle
\prod_{x=1}^N \frac{1}{\sdet (\hat{E}_{x}+i\eps\mathbbm{1}_{2}-R_x)} %}
\]
where $\hat{E}_{x}$ is the $2\times 2$ supermatrix
\[
\hat{E}_{x}:=\Big(\begin{smallmatrix} \tilde{E}_{x} & 0
\\ 0 & E_{x}  \end{smallmatrix}\Big),
\]
Setting $\mathbf{E}= \tilde{\mathbf{E}}=E\mathbbm{1}_{N}$ we obtain the measure in \eqref{def:susymeasure} whence
$1 = \mathcal{J} (E\mathbbm{1}_{N}, E\mathbbm{1}_{N}) = \big\langle  1\big\rangle_{SUSY}.$
Moreover
\begin{multline*}
 -\partial_{\tilde{E}_{y}}\mathcal{J} (E\mathbbm{1}_{N}, E\mathbbm{1}_{N})=
  \int \prod_{x=1}^N dR_{x}\,   e^{-\half\str (R,JR) }
 (-\partial_{\tilde{E}_{y}}) \prod_{x=1}^N \frac{1}{\sdet (\hat{E}_{x}+i\eps\mathbbm{1}_{2}-R_x)}
 \big |_{\mathbf{E}= \tilde{\mathbf{E}}=E\mathbbm{1}_{N}}\\
=
  \int \prod_{x=1}^N dR_{x}\,   e^{-\half\str (R,JR) }
 (\partial_{a_{y}})\prod_{x=1}^N \frac{1}{\sdet (E_{\eps}-R_x)}=
 \big\langle  \partial_{a_{y}}{\scriptstyle \frac{\str (R,JR)}{2}}  \big\rangle_{SUSY}=
 \big\langle  (J\mathbf{a})_{y}  \big\rangle_{SUSY}
\end{multline*}
where we used 
\[ (\partial_{\tilde{E}_{y}}+\partial_{a_{y}})\sdet (\hat{E}_{y}+i\eps\mathbbm{1}_{2}-R_y)=0 \] 
and we performed
integration by parts in $a_{y}.$ The same argument holds for $\partial_{E_{y}}\mathcal{J} (E\mathbbm{1}_{N}, E\mathbbm{1}_{N})$
using 
\[ (\partial_{E_{y}}-i \partial_{b_{y}})\sdet (\hat{E}_{y}+i\eps\mathbbm{1}_{2}-R_y)=0~.\]
This completes the proof of \eqref{eq:reprG}.
Performing integration by parts twice we obtain
\begin{align*}
\partial_{\tilde{E}_{y'}}
(\partial_{\tilde{E}_{y}}+\partial_{E_{y}})
\mathcal{J} (E\mathbbm{1}, E\mathbbm{1})&=
 \int \prod_{x=1}^N dR_{x}\, \prod_{x=1}^N \frac{1}{\sdet (E_{\eps}-R_x)}
 (-\partial_{a_{y'}}) [J(\mathbf{a}-i\mathbf{b})]_{y}  e^{-\half\str (R,JR) }
\\
&= -J_{yy'} + \big\langle  (J\mathbf{a})_{y'}  [J(\mathbf{a}-i\mathbf{b})]_{y}   \big\rangle_{SUSY}.
\end{align*}
This completes the proof of \eqref{eq:reprGG}. Similar arguments  yield
\begin{align*}
-\partial_{\tilde{E}}
(\partial_{E}+\partial_{\tilde{E}})^{n}
\mathcal{J} (E\mathbbm{1}_{N}, \tilde{E}\mathbbm{1}_{N})_{|E=\tilde{E}}&=
(-1)^{n+1}nN\big\langle  \big [\sum_{y} \str R_{y}\big ]^{n-1}\big\rangle_{SUSY}\\
& + (-1)^{n}
\big\langle  \big [\sum_{x} a_{x}\big ]\, \big  [\sum_{y} \str R_{y} \big ]^{n}\big\rangle_{SUSY},
\end{align*}
where we used that $(\partial_{a_{y}}-i\partial_{b_{y}}) (a_{y}-ib_{y})=0$ for any y.
The first summand can be written as
\begin{equation}\label{eq:susygamma}
\big\langle  \big [\sum_{y} \str R_{y}\big ]^{n-1}\big\rangle_{SUSY}= \partial_{\gamma}^{n-1}
\big\langle  e^{\gamma \sum_{y}\str R_{y}}\big\rangle_{SUSY|\gamma =0}=0\quad \forall n>1
\end{equation}
since by performing the translation $R\to R+\gamma \mathbbm{1}_{2}$ we get
\[
\big\langle  e^{\gamma \sum_{y}\str R_{y}}\big\rangle_{SUSY}= \mathcal{J} ((E-\gamma) \mathbbm{1}, (E-\gamma) \mathbbm{1})=1
\qquad \forall \gamma \in \mathbb{R}.
\]
This completes the proof of
\eqref{eq:reprGder}.
\end{proof}
 Identities of the form (\ref{eq:repr1}) for supersymmetric averages of supersymmetric
functions go back to the work of Parisi and Sourlas, cf.\ \cite[Chapter 15]{Wegbook}.
Note that the first two items appear also in this form, for example, in \cite{shcherbina2016transfer,disertori2016density}.

\paragraph{Translation to the saddle} Disregarding mathematical precision in our thinking about Grassmann variables,
we have the following heuristics. The factor 
\[
\exp\big(-\frac {W^2}2 \sum  \str (R_{x+1}-R_x)^2\big)
\]
forces the ``integration field'' $R_x$ to be almost constant, and the Laplace method then implies that the main
contribution is from field $R_x$ in the vicinity of the ``saddle points'' of $ e^{-\half\str R_x^2}(\sdet (E_\eps-R_x))^{-1}$.
Now we proceed with the rigorous argument. Setting the Grassmann variables equal to $0$ and neglecting the $\eps$ contribution it is easy to see that the
saddle points (i.e. the critical points of the logarithm) are at 
\begin{equation}\label{eq:absaddledef}
\begin{split}
a^\pm &= \half \Big(E\pm i\sqrt{4-E^2}\Big)\\
b^\pm &= (-i)\frac{1}{2} \Big(E\pm i\sqrt{4-E^2}\Big).
\end{split}
\end{equation}
$a^+$ is in the same half space as the pole, therefore we choose as contour 
$\Gamma_a$ the translate of the real axis with origin at $a^- $.
We will abbreviate $\mc E =a^{-}= \frac E2 - i\sqrt{1-\frac{E^2}4}$.
Note that $\mc E \bar {\mc E}=1.$
It will turn out later that $b^-$ is the dominant saddle point\footnote{If, as is the case for GUE,
$\rho_x\equiv \rho$ were a collective variable, i.e.\ a single Grassmann generator,
and similarly for $ \ol\rho$, this could be seen heuristically by evaluating
$I=e^{-\frac N2\str R^2}\sdet (E_\eps-R)^{-N} $ at
$R=R^\pm = \Big(\begin{smallmatrix} a^- & \ol\rho \\ \rho & ib^\pm \end{smallmatrix}\Big) $.
Set $I\vert_{R=R^\pm,\eps=0} = I_0\pm I^\pm \ol\rho\rho  $.
If there were no observable, the Grassmann integral $\ud\rho\ud\ol\rho $ would select $I^\pm $.
Explicit calculation gives $I^- = -N (2-\frac {E^2}2 +i E \sqrt{4-E^2} ) $, which is large (proportional to $N$),
while $I^{+} = -N(1-\vert \mc E\vert^2)=0. $
A more convincing argument would have to analyze ``fluctuations'' in $\rho,\ol\rho$.}.
We therefore choose as  contour $\Gamma_b$ the translate of the real axis with origin at $b^- .$
After this translation, the limit $\eps\searrow0 $ can be performed by dominated convergence. We introduce
the modified SUSY average
\begin{equation}\label{eq:newsusyav}
\langle F \rangle_{SUSY}' : =
\int \prod_{x=1}^N \, \ud R_x \,  e^{-\half\str (R,JR) } e^{-\mc{E} \sum_{x}\str R_{x} }
%{\scriptstyle
\prod_{x=1}^N \frac{1}{\sdet (\bar{\mc{E}}-R_x)}%}
F ((R_{x})_{x \in \{1,\cdots,N\}}),
\end{equation}
where  $\epsilon$ is now set equal to $0$ and the integrals in $a_{x},b_{x}$ are 
taken along the real axis. Moreover we used $E-\mc E=\bar{\mc E},$ 
\begin{align*}
\str (R+\mc E \mathbbm{1}_{2},J (R+\mc E \mathbbm{1}_{2}))& = \str (R,JR)+2\mc{E}  \str (R,J \mathbbm{1}_{2})+
\mc{E}^{2}  \str (\mathbbm{1}_{2},J \mathbbm{1}_{2})\\
&=
 \str (R,JR)+2 \mc{E} \sum_{x}\str R_{x}, 
\end{align*}
where the last term vanishes since $\str  \mathbbm{1}_{2}=0$. We then have
\begin{cor}\label{coralg} Let $|E| < 2$. Then
\begin{align}
\lim_{\eps\searrow 0}\big\langle G_{yy}[H_N](E_\eps)\big\rangle &= \mc E  +\langle (J\mathbf{a})_{y} \rangle_{SUSY}'=
\mc E  +\langle (J (i\mathbf{b}))_{y} \rangle_{SUSY}'
\label{eq:coralg.1}\\
\lim_{\eps\searrow 0}\langle G_{yy'}[H_N](E_\eps)G_{y'y}[H_N](E_\eps) \rangle &=
-J_{yy'} + \big\langle  (J\mathbf{a})_{y'}  [J(\mathbf{a}-i\mathbf{b})]_{y}   \big\rangle_{SUSY}'\label{eq:coralg.2}\end{align}
and for $n \geq 1$
\begin{equation}
\lim_{\eps\searrow 0}\partial_{E}^{n}\big\langle \tr G[H_N](E_\eps)\big\rangle=N\delta_{n,1}+ (-1)^{n}
\big\langle  \big [\sum_{x} a_{x}\big ]\, \big  [\sum_{y} (a_{y}-ib_{y}) \big ]^{n}\big\rangle_{SUSY}'~.
\label{eq:coralg.3}
\end{equation}
\end{cor}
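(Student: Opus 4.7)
The plan is to apply Proposition~\ref{propalg} and then carry out three routine operations: (i) deform $\Gamma_a$ and $\Gamma_b$ to horizontal lines through the saddles $\mc E = a^-$ and $b^-$, (ii) translate the integration variables by $R_x \to R_x + \mc E\id_2$ to place the saddle at the origin, and (iii) take the $\eps\searrow 0$ limit by dominated convergence. The algebra identifying the resulting expression with the statement then reduces to two Ward identities already established in the proof of Proposition~\ref{propalg}.

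For step (i), the hypothesis $|E|<2$ gives $\operatorname{Im}\mc E = -\tfrac12\sqrt{4-E^2}<0$, so the shifted contour $\mathbb R+\mc E$ still lies strictly below the pole of $\sdet(E_\eps - R_x)^{-1}$ at $a_x = E_\eps$; no residue is picked up while sliding, and the Gaussian factor $e^{-\frac12 a_x^2}$ dominates the (fixed-degree) polynomial Grassmann prefactors to provide horizontal decay. The $b_x$-integrand is entire, so $\Gamma_b$ may be moved freely. For step (ii), as displayed just before the corollary, the quadratic form picks up the source $-\mc E\sum_x \str R_x$ (the $\mc E^2 \str(\id_2,J\id_2)$ piece vanishing because $\str\id_2=0$), and $\sdet(E_\eps - R_x)\mapsto \sdet(\bar{\mc E}+i\eps - R_x)$ via $E-\mc E = \bar{\mc E}$. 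In step (iii), the displaced pole at $a_x=\bar{\mc E}$ sits at distance $\tfrac12\sqrt{4-E^2}$ from the real axis, so the integrand is uniformly bounded by a fixed integrable function in small $\eps$, and dominated convergence lets me replace $i\eps$ by $0$; the outcome is that $\langle\cdot\rangle_{SUSY}$ is replaced by $\langle\cdot\rangle'_{SUSY}$ evaluated on the translated observable.

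Applying the translation $\mathbf a\to \mathbf a + \mc E\mathbf 1$, $\mathbf b\to \mathbf b + b^-\mathbf 1$ to each identity of Proposition~\ref{propalg}, I find the following. $(J\mathbf a)_y\mapsto (J\mathbf a)_y + \mc E(J\mathbf 1)_y = (J\mathbf a)_y+\mc E$, since $J\mathbf 1=\mathbf 1$ (as $\Delta_N\mathbf 1=0$); symmetrically $(iJ\mathbf b)_y\mapsto (iJ\mathbf b)_y+\mc E$ using $ib^-=\mc E$. The combination $\mathbf a-i\mathbf b$ is invariant under the shift (its constant part $\mc E-ib^-$ vanishes), whereas $(J\mathbf a)_{y'}$ and $\sum_x a_x$ pick up $\mc E$ and $N\mc E$ respectively. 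This recovers \eqref{eq:coralg.1}--\eqref{eq:coralg.3} up to two spurious terms, $\mc E\langle[J(\mathbf a-i\mathbf b)]_y\rangle'_{SUSY}$ in \eqref{eq:coralg.2} and $(-1)^n N\mc E\langle[\sum_y \str R_y]^n\rangle'_{SUSY}$ in \eqref{eq:coralg.3}.

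The vanishing of both spurious terms is obtained by transferring Ward identities for the original measure through the shift. First, $\langle[J(\mathbf a-i\mathbf b)]_y\rangle_{SUSY}=0$ follows from $(\partial_{E_y}+\partial_{\tilde E_y})\mc J|_{\mathbf E=\tilde{\mathbf E}}=0$, as implicit in the proof of Proposition~\ref{propalg}. Second, \eqref{eq:susygamma} gives $\langle[\sum_y \str R_y]^n\rangle_{SUSY}=0$ for $n\geq 1$. Both observables are invariant under $R\to R+\mc E\id_2$ (again because $\str\id_2=0$), so the contour-deformation argument of steps (i)--(iii) transfers these identities to $\langle\cdot\rangle'_{SUSY}$. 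The only potential obstacle is to verify that the dominated-convergence bound is uniform in $\eps$ throughout the deformation; since each Grassmann sector yields a finite polynomial in $a_x,b_x$ of bounded degree, this reduces to standard Gaussian integral estimates.
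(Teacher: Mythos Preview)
Your proof is correct and follows essentially the same route as the paper: translate the identities of Proposition~\ref{propalg} via the contour shift $R\to R+\mc E\id_2$, then remove the spurious cross-terms by the Ward identities already proved there (the equality of the two expressions in \eqref{eq:reprG} for \eqref{eq:coralg.2}, and the argument of \eqref{eq:susygamma} for \eqref{eq:coralg.3}). Your write-up is in fact more explicit than the paper's on the analytical justification of the contour deformation and the dominated-convergence step.
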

\begin{proof}
\eqref{eq:coralg.1} follows from \eqref{eq:reprG}  by replacing $\mathbf{a} $ and  $i\mathbf{b}$ by
$\mathbf{a}+\mc{E}$ and $i\mathbf{b}+\mc{E}$. In the same way,  \eqref{eq:coralg.2} follows from 
 \eqref{eq:reprGG} by observing that, in addition,  \eqref{eq:coralg.1} implies 
 $\langle (J\mathbf{a})_{y} \rangle_{SUSY}'=\langle (J (i\mathbf{b}))_{y} \rangle_{SUSY}'$.
Finally \eqref{eq:coralg.3} follows from \eqref{eq:reprGder}  observing that
 $\langle \big [  \sum_{y}\str R_{y}\big]^{n} \rangle_{SUSY}'=0$ for all $n\geq 1$ by the same argument as in \eqref{eq:susygamma}.
\end{proof}
Note that $-\frac 1\pi \Im \mc E = \frac1{2\pi} \sqrt{4-E^2} $ is the semicircle density. 
Thus, for example, (\ref{finitevolsc}) amounts to showing that 
the remaining term in (\ref{eq:coralg.1}) is small for  large $W$ and $N$.

\subsection{Supersymmetric transfer operator}
The representation of Corollary \ref{coralg} can be stated using a supersymmetric transfer operator.
Recall that  $\str (R+\mc{E},J (R+\mc{E}))=\sum_{x}\str (R_{x}+\mc{E})^{2}+ W^{2} \sum_{x=1}^{N-1} \str (R_{x+1}-R_x)^2.$
We denote
\begin{equation}\label{eq:Vdef}
e^{-V(R)} := \frac{e^{-\half  \str (R+\mc E)^2}}{\sdet ( \ol{\mc E}-R)}
\end{equation}
and define
\begin{equation}\label{eq:Tsusydef}
\begin{split}
(\mf T F)(R) &:=  e^{-V(R)} \int \ud R'\,  e^{-\frac{ W^2}2 \str (R-R')^2} F(R') .
\end{split}
\end{equation}
\begin{rem}
Here we consider $e^{-V}$ as a single typographic symbol,
therefore the reader need not be concerned, for example, with the choice of 
a branch of the logarithm. Also, for now  we treat $\mf T$ as a formal operation rather than as an operator, therefore
we do not worry about the domain.
\end{rem}
Using \eqref{eq:Vdef} and \eqref{eq:Tsusydef} we have for all   $1\leq y\leq y'\leq N$
\begin{equation}\label{eq:transfa}
\begin{split}
\langle a_{y} \rangle_{SUSY}'  &=
 \int \ud R\, \big[\mf T^{y-1}e^{-V}](R)\  e^{V(R)} \,a \ \big[\mf T^{N-y}e^{-V}](R)\\
\langle a_{y} (a_{y'}-ib_{y'}) \rangle_{SUSY}' &= \langle a_{y} \str R_{y'} \rangle_{SUSY}' = I_{yy'},\qquad \mbox{where}\\
I_{yy'}&:=  \int \ud R \big[\mf T^{y-1}e^{-V}](R)\  e^{V(R)} \,a \  \big[\mf T^{y'-y}\str (\cdot)\mf T  ^{N-y'}e^{-V}](R),
\end{split}
\end{equation}
where  %$\mf G $
$\str (\cdot)$ is the ``multiplication operator'' by $(\str R)$ and we use the convention $\mf T^{0}=\mathrm{id}.$
In the same way we get, for any $n\geq 1,$
\begin{equation}\label{eq:2.18}
\begin{split}
\big\langle  \big [\sum_{x} a_{x}\big ]\, \big  [\sum_{y} (a_{y}-ib_{y}) \big ]^{n}\big \rangle_{SUSY}'  &= \sum_{q=1}^{n} 
\sum_{\substack{n_{1},\dotsc n_{q}\geq 1 \\  n_{1}+\dotsb+ n_{q}=n}} \tfrac{n!}{n_{1}!\cdots n_{q}!}\sum_{m=0}^{n}
\sum_{\substack{1\leq y_{1}< y_{2}< \dotsb  \  y_{q}\leq N\\
   y_{m}\leq x<y_{m+1}}}
\hspace{-0.5cm}I_{x, y_{1},\dotsc ,y_{q}}^{m,n_{1},\dotsc ,n_{q}}
\end{split}
\end{equation}
where we defined $y_{0}=1,$ $y_{q+1}=N+1,$ $n_{0}=n_{q+1}=0,$ and  
\begin{equation}\label{eq:defTmultiple}
\begin{split}
&I_{x, y_{1},\dotsc ,y_{q}}^{m,n_{1},\dotsc ,n_{q}}:=
\int \ud R \big[ \mf T^{x-y_{m}}\str (\cdot)^{n_{m}} \dotsb \mf T^{y_{2}-y_{1}}\str (\cdot)^{n_{1}} \mf T^{y_{1}-1}e^{-V}](R)\\
&\qquad \cdot [e^{V(R)} \,a] \  \big[\mf T^{y_{m+1}-x} \str (\cdot)^{n_{m+1}}\mf T^{y_{m+2}-y_{m+1}} \dotsb \str (\cdot)^{n_{q}}
\mf T^{N-y_{q}}e^{-V}](R).
\end{split}
\end{equation}
Moreover, in the special case $m=0$ the first product is replaced by $\mf T^{x-1}e^{-V}$ while for $m=q$
the second product is replaced by  $\mf T^{N-x}e^{-V}.$ For $n=1$ \eqref{eq:2.18} can be simplified as follows
\[
\big\langle  \big [\sum_{x} a_{x}\big ]\, \big  [\sum_{y} (a_{y}-ib_{y}) \big ]\big \rangle_{SUSY}'  =
- \sum_{1\leq x<y\leq N} I^{0,1}_{x,y}- \sum_{1\leq y\leq x\leq N} I^{1,1}_{x,y} = -2\sum_{x<x'} I_{xx'}- I_{xx},
\]
where for $x\leq x'$ $I_{xx'}$ was defined in \eqref{eq:transfa} and we used $I^{0,1}_{xy}=I_{xy},$ $I^{1,1}_{xy}=I_{N-x+1, N-y+1}.$

We want to reduce $\mf T $ to an ordinary transfer operator, involving no Grassmann integration variables.
This could be done by hand, i.e. by expanding in the Grassmann variables and selecting top degree coefficients.
In this paper, we will instead exploit a supersymmetry that morally states that $\mf T $ commutes with
the superrotations of the supermatrix $R$.  This will yield a purely bosonic (Grassmann free)
representation of $\mf T $ in  the appropriately defined polar coordinates.

To make this intuition  precise, let $\xi,\eta$ be  two new Grassmann generators
(that may or may not depend on $\rho,\ol{\rho }$). Then the matrix
\begin{equation}\label{eq:Udef}
U_{\eta,\xi}:= \exp \Big(\begin{smallmatrix} 0 & \eta \\ \xi & 0  \end{smallmatrix}\Big)=
 \Big(\begin{smallmatrix} e^{\frac{1}{2}\eta \xi } & \eta \\ \xi & e^{-\frac{1}{2}\eta \xi }  \end{smallmatrix}\Big)
\end{equation}
is a ``superunitary'' rotation in the following sense:
$U_{\eta,\xi}^{-1}=U_{-\eta,-\xi}=
\exp \Big(\begin{smallmatrix} 0 & -\eta \\ -\xi & 0  \end{smallmatrix}\Big)$. 
It is easy to see that $\str R^{n}=\str (U_{\eta\xi}^{-1} R U_{\eta \xi})^{n}$ and
$\sdet R= \sdet (U_{\eta\xi}^{-1} R U_{\eta \xi})$ for any $n\geq 1.$

\begin{lemma}
The measure $\ud R$ is invariant under superunitary rotations: for any
Schwartz function $F (R)$ one has \begin{equation}\label{eq:inv}
\int \ud R\,  F (R)=\int \ud R\,  F (U_{\eta\xi}^{-1} R U_{\eta \xi}) \qquad \forall \eta,\xi. 
\end{equation}
\end{lemma}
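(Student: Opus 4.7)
The plan is to exploit the nilpotency of $\eta,\xi$, which forces $F(U_{\eta,\xi}^{-1}RU_{\eta,\xi})$ to be a polynomial of total degree at most $2$ in $\eta,\xi$, and to reduce the invariance statement to two one-parameter sub-supergroup actions, each handled by elementary integration by parts.

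First I would verify the matrix identity
\[
U_{\eta,0}\, U_{0,\xi} \;=\; \bigl(1+\tfrac12\eta\xi\bigr)\, U_{\eta,\xi},
\]
a short computation using $(1+\tfrac12\eta\xi)^2 = 1+\eta\xi$ and $(1+\tfrac12\eta\xi)(1-\tfrac12\eta\xi) = 1$. Since the scalar prefactor $1+\tfrac12\eta\xi$ is supercentral, it cancels in the conjugation, giving
\[
U_{\eta,\xi}^{-1} R\, U_{\eta,\xi} \;=\; U_{0,\xi}^{-1}\,U_{\eta,0}^{-1}\,R\,U_{\eta,0}\,U_{0,\xi}.
\]
It therefore suffices to prove invariance separately for the one-parameter sub-supergroups generated by $U_{\eta,0}$ (with $\xi=0$) and by $U_{0,\xi}$ (with $\eta=0$); invariance under the full $U_{\eta,\xi}$ then follows by composition. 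This packaging absorbs the mixed $\eta\xi$ contribution automatically, so one never has to perform a second-order Grassmann Taylor expansion.

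For the case $\xi=0$ (the case $\eta=0$ being symmetric under $\rho\leftrightarrow\ol\rho$), direct multiplication with $U_{\eta,0}=\bigl(\begin{smallmatrix}1&\eta\\0&1\end{smallmatrix}\bigr)$ yields
\[
U_{\eta,0}^{-1}\, R \, U_{\eta,0} \;=\; \Bigl(\begin{smallmatrix} a+\rho\eta & \ol\rho + (a-ib)\eta \\ \rho & ib+\rho\eta \end{smallmatrix}\Bigr).
\]
Because $\eta^2 = (\rho\eta)^2 = 0$, the Grassmann Taylor expansion of $F$ at this shifted argument terminates at first order in $\eta$. Pulling $\eta$ out to the left past the odd factors,
\[
F\!\left(U_{\eta,0}^{-1} R U_{\eta,0}\right) - F(R) \;=\; \eta\cdot\Bigl[-\rho\,\partial_a F + i\rho\,\partial_b F + (a-ib)\,\partial_{\ol\rho} F\Bigr],
\]
with signs recording the anticommutation of $\eta$ with the odd factors. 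I would then verify term by term that the bracketed expression integrates to zero against $\ud R = \ud a\,\ud b\,\ud\ol\rho\,\ud\rho$: the $\partial_a F$ and $\partial_b F$ contributions vanish by ordinary integration by parts in the real variables $a,b$, using Schwartz decay; the $\partial_{\ol\rho} F$ contribution vanishes because $\int \ud\ol\rho\,\ud\rho\,\partial_{\ol\rho} G = 0$ for any superfunction $G$, the derivative having eliminated the top-degree $\ol\rho$ factor that the Berezin integral is designed to select.

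The main obstacle is essentially bookkeeping: carefully tracking the signs produced whenever a Grassmann generator is commuted past another during the Taylor expansion, and keeping in mind that the even shifts $a\mapsto a+\rho\eta$ and $b\mapsto b-i\rho\eta$ are themselves composite nilpotents whose squares vanish for the same reason $\eta^2$ does. Once these sign conventions are fixed, the single-generator invariance reduces to two familiar integration-by-parts identities—one in ordinary calculus and one in Berezin calculus—and the factorization from the first step propagates this invariance to the full $U_{\eta,\xi}$.
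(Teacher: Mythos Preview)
Your proof is correct and takes a genuinely different route from the paper. The paper handles the full rotation $U_{\eta,\xi}$ in one shot: it writes out $R'=U_{\eta,\xi}^{-1}RU_{\eta,\xi}$ explicitly with a composite nilpotent $\mathfrak n=\eta\xi(a-ib)+(\ol\rho\xi+\rho\eta)$, Taylor-expands $F$ to second order in $\mathfrak n$ (since $\mathfrak n^2\neq0$), and then checks term by term that after the Berezin integral over $\rho,\ol\rho$ everything that remains is a total $a$- or $b$-derivative. Your approach instead factors $U_{\eta,\xi}$ (up to a supercentral scalar) as $U_{\eta,0}U_{0,\xi}$ and reduces to two one-parameter invariances, each of which only requires a first-order expansion in a single odd parameter. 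This is cleaner and more in the spirit of an infinitesimal (Lie-algebraic) argument; it avoids the second-order bookkeeping entirely. The paper's direct computation has the advantage of being completely self-contained in one pass, but your decomposition buys simplicity at the cost of one extra observation (the factorisation) and the need to note that the one-parameter invariance applies equally well when $F$ carries the remaining odd parameter as a spectator. Both arguments terminate in the same two mechanisms: ordinary integration by parts in $a,b$ and the vanishing of the Berezin integral of a $\partial_{\ol\rho}$ (or $\partial_\rho$) term.
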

\begin{proof}
This is a special case of the general framework of Berezin integration
\cite{berezin2013introduction} as applied to the conjugation supersymmetry at hand.
We will prove it here by direct computation. The function $F$ has a unique  decomposition as
\[
F (R)= F_{0} (a,b)+ \rho F_{1} (a,b)+ \ol{\rho } F_{2} (a,b)+\rho  \ol{\rho} F_{3} (a,b).
\]
Replacing in this formula
\begin{equation}\label{eq:Rrotation}
R'=U_{\eta\xi}^{-1} R U_{\eta \xi}=
\Big(\begin{smallmatrix}
a+\mathfrak n &   \ol{\rho}+\eta (a-ib)\\
\rho -\xi (a-ib) & ib + \mathfrak n  \end{smallmatrix}\Big),
\end{equation}
where $\mathfrak n:=\eta \xi (a-ib)+ ( \ol{\rho }\xi +\rho \eta) $, 
%where  $\delta:= \ol{\rho }\xi +\rho \eta, $
one gets
\[
F (R')= \tilde{F}_{0} + (\rho-\xi (a-ib)) \tilde{F}_{1}+
(\ol{\rho }+\eta (a-ib)) \tilde{F}_{2} + (\rho-\xi (a-ib)) (\ol{\rho }+\eta (a-ib)) \tilde{F}_{3},
\]
where we used $\mathfrak n^{2}= -2 \rho  \ol{\rho} \eta \xi$ and  $\mathfrak n^{3}=0$ and we defined 
\[
\tilde{F}_{i} := F_{i} (a+\mathfrak n,b-i\mathfrak n)= F_{i} (a,b)+\mathfrak n (\partial_{a}-i\partial_{b})F_{i} (a,b) +\frac{\mathfrak n^{2}}{2}
(\partial_{a}-i\partial_{b})^{2}F_{i} (a,b).
\]
Note that
\begin{align*}
 &(\rho-\xi (a-ib)) \mathfrak n= \rho \ol{\rho }\xi~,&    &(\rho-\xi (a-ib)) \mathfrak n^{2}=0&\\
 &(\ol{\rho }+\eta (a-ib)) \mathfrak n=-\rho \ol{\rho }\eta~,&     &(\ol{\rho }+\eta (a-ib))\mathfrak n^{2}=0.&
\end{align*}
%$ (\rho-\xi (a-ib)) n= \rho \ol{\rho }\xi $ and
%$ (\ol{\rho }+\eta (a-ib))n=-\rho \ol{\rho }\eta,$
Then 
\begin{align*}
F (R')-F (R)&=\tilde{F}_{0}-F_{0}+ (a-ib)[-\xi F_{1}+\eta F_{2}]+  \rho \ol{\rho } (\partial_{a}-i\partial_{b})
[\xi F_{1}-\eta F_{2}] \\
&+ (a-ib) [\ol{\rho }\xi  +\rho \eta+\eta \xi (a-ib) ]F_{3}-  \rho \ol{\rho }\eta \xi (a-ib)(\partial_{a}-i\partial_{b})F_{3}.
\end{align*}
Performing now integration over $\rho,\ol{\rho}$ we obtain a sum of terms of the form $\partial_{a}F_{i},\partial_{b}F_{i},$
$ (\partial_{a}-i\partial_{b})^{2}F_{i}$
or $(a-ib) (\partial_{a}-i\partial_{b})F_{i},$ hence the integral over $a$ and $b$ yields $0.$  This completes the proof.
\end{proof}

\paragraph{Polar decomposition} Every supermatrix
{ $\Big(\begin{smallmatrix} a & \ol\rho \\ \rho & ib  \end{smallmatrix}\Big)$ with $(a,b)\neq (0,0)$}
has a polar decomposition as follows
\begin{align}\label{eqpolar}
\Big(\begin{smallmatrix} a & \ol\rho \\ \rho & ib  \end{smallmatrix}\Big)
&= \left(\exp\Big(\begin{smallmatrix}  0 & -\eta \\ -\xi & 0   \end{smallmatrix}\Big) \right)
\Big(\begin{smallmatrix} \lambda_1 & 0 \\ 0 & i\lambda_2   \end{smallmatrix}\Big)
\left(\exp \Big(\begin{smallmatrix} 0 & \eta \\ \xi & 0  \end{smallmatrix}\Big)\right)=
U_{\eta \xi }^{-1} \Big(\begin{smallmatrix} \lambda_1 & 0 \\ 0 & i\lambda_2   \end{smallmatrix}\Big)U_{\eta \xi }
\end{align}
with (cf. \eqref{eq:Rrotation})
\begin{equation}\label{egvals}
\begin{aligned}
\lambda_1 &=a  +\frac{\ol\rho\rho}{a-ib}    &\quad  &
\lambda_2 &=& b   -i \frac{\ol\rho \rho}{a-ib}  \\
\eta &=  \frac{\ol\rho }{a -ib  } &\quad &
\xi &=& -\frac{\rho }{a -ib  }.
\end{aligned}
\end{equation}
We call $\lambda= (\lambda_1,\lambda_2) $ the eigenvalues of $R$, and $(\lambda,\eta,\xi)$ -- 
the polar coordinates for $R$. {  Note that $\lambda_{1}-i\lambda_{2}=a-ib\neq 0$ for $(a,b)\neq (0,0)$.}
Any function $F (R)$ invariant under superunitary rotations
depends on the eigenvalues $\lambda_{1},\lambda_{2}$ only. In this case we will write
\[
f (\lambda):=  F (\mbox{diag} (\lambda_{1},i\lambda_{2}))= F (R).
\]
In particular  we have $\str R^{n}= \lambda_{1}^{n}- (i\lambda_{2})^{n}$ $\forall n\geq 0$ and 
$\sdet R= \frac{\lambda_{1}}{i\lambda_{2}}.$ Hence the potential term in $\mc{T}$ \eqref{eq:Vdef} becomes
\begin{equation}\label{eq:Vpol}
e^{-V (R)}=\frac{\bar{\mc{E}}-i\lambda_{2}}{\bar{\mc{E}}-\lambda_{1}} 
e^{-\frac{1}{2} (\lambda_{1}+\mc{E})^{2}+ (\lambda_{2}-i\mc{E})^{2}}=
\frac{\bar{\mc{E}}-i\lambda_{2}}{\bar{\mc{E}}-\lambda_{1}} 
e^{-\half (\lambda_1^2+\lambda_2^2)- \mc E(\lambda_1-i\lambda_2)}=:e^{-V (\lambda )}.
\end{equation}
Note that $\lambda_{1},\lambda_{2}$ are \emph{even} elements of the Grassman algebra. We will see below that
inside an integral we can replace them by ordinary real variables. 

\begin{lemma}\label{le:unitaryinv}
The  transfer operator $\mf T$ preserves superunitary rotation invariance. Precisely, let $F (R)$ be such that all
coefficients $F_{I} (a,b)$ are smooth and  $|F_{I} (a,b)|\leq e^{K (a^{2}+b^{2})}$ with
$K<\frac{W^{2}}{2}.$ Assume $F$ is invariant
under superunitary rotations $F (R)=F (U_{\eta\xi}^{-1} R U_{\eta \xi})$ $\forall \eta, \xi.$
Then $\mf{T}F$ is also invariant: for all $\eta,\xi$, 
$[\mf{T}F] (R)=[\mf{T}F] (U_{\eta\xi}^{-1} R U_{\eta \xi})$.
\end{lemma}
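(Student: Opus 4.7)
The plan is to reduce the claim to two facts: the conjugation invariance of $V(R)$, and the conjugation invariance of the kernel $e^{-\frac{W^2}{2}\str(R-R')^2}$ combined with the measure invariance \eqref{eq:inv} that was just established. Concretely, fix $\eta,\xi$ and write $U=U_{\eta\xi}$. I would directly compute
\[
[\mf{T}F](U^{-1}RU) = e^{-V(U^{-1}RU)} \int \ud R'\, e^{-\frac{W^2}{2}\str(U^{-1}RU-R')^2}\, F(R')
\]
and then perform a change of variables $R'\mapsto U^{-1}R'U$ under the integral, so that the kernel becomes $e^{-\frac{W^2}{2}\str(U^{-1}(R-R')U)^2}=e^{-\frac{W^2}{2}\str(R-R')^2}$ by cyclicity of $\str$, while $F(U^{-1}R'U)=F(R')$ by hypothesis. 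If the change of variables is justified, the right-hand side collapses to $e^{-V(R)}\int \ud R'\, e^{-\frac{W^2}{2}\str(R-R')^2}F(R')=[\mf{T}F](R)$.

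The invariance of $V$ is immediate from the definition \eqref{eq:Vdef}: since $U^{-1}\mathbbm{1}_2 U=\mathbbm{1}_2$, one has $\str(U^{-1}RU+\mc E\mathbbm{1}_2)^2=\str(R+\mc E\mathbbm{1}_2)^2$ by cyclicity, and likewise $\sdet(\ol{\mc E}\mathbbm{1}_2-U^{-1}RU)=\sdet(U^{-1}(\ol{\mc E}\mathbbm{1}_2-R)U)=\sdet(\ol{\mc E}\mathbbm{1}_2-R)$ by the multiplicativity of $\sdet$; these two identities are already noted just before the previous lemma.

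The main subtlety is justifying the change of variables, since the previously proved invariance \eqref{eq:inv} was stated only for Schwartz functions, whereas we are only told $|F_I(a,b)|\le e^{K(a^2+b^2)}$ with $K<W^2/2$. I would apply \eqref{eq:inv} not to $F$ itself but to the function $R'\mapsto e^{-\frac{W^2}{2}\str(R-R')^2}F(R')$ at fixed $R$: expanding $\str(R-R')^2=(a-a')^2+(b-b')^2+2(\ol\rho-\ol\rho')(\rho-\rho')$, the bosonic prefactor contributes a Gaussian $e^{-\frac{W^2}{2}[(a-a')^2+(b-b')^2]}$, which together with $K<W^2/2$ beats the growth of every $F_I$ and makes each coefficient of the integrand Schwartz in $(a',b')$. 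A short approximation argument (or the observation that the proof of \eqref{eq:inv} only used integration by parts in $a',b'$, which works whenever the coefficients decay at infinity) then lets us apply the invariance. The remaining bookkeeping — checking that the $\ol\rho',\rho'$-dependent part is a polynomial whose coefficients are Schwartz in $(a',b')$ — is routine, and this is the only place where the growth hypothesis on $F$ enters.
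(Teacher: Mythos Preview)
Your proof is correct and follows essentially the same route as the paper: write $(\mf T F)(U^{-1}RU)$, use the conjugation invariance of $V$ (via $\str$ and $\sdet$), and then apply the measure invariance \eqref{eq:inv} to pass $U^{-1}(\cdot)U$ through the integration, together with the assumed invariance of $F$. The only difference is that you take more care than the paper does to explain why the growth bound $K<W^2/2$ lets the Gaussian kernel absorb the growth of the $F_I$ so that \eqref{eq:inv} (stated for Schwartz functions) actually applies; the paper simply invokes \eqref{eq:inv} without comment.
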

\begin{proof}
We abbreviate $U_{\eta \xi }$ by $U.$ We have
\begin{align*}
(\mf T F)(U^{-1}RU) &=  e^{-V(U^{-1}RU)} \int \ud R'\,  e^{-\frac{ W^2}2 \str (U^{-1}RU-R')^2} F(R') \\
                   &= e^{-V(R)} \int \ud R'\,  e^{-\frac{ W^2}2 \str (R-UR'U^{-1})^2} F(UR'U^{-1})\\
                   &=  e^{-V(R)} \int \ud R'\,  e^{-\frac{ W^2}2 \str (R-R')^2} F(R')= (\mf T F)(R).
\end{align*}
where in the second line we used $V (U^{-1}RU)=V (R)$ (since both $\sdet $ and $\str $ are invariant)
and $F (R')= F(UR'U^{-1})$ (since we assumed $F$ is invariant). 
Finally in the last line we used \eqref{eq:inv} with $U$ replaced by $U^{-1}.$
\end{proof}

\begin{rem}\label{rem:trans1}
This lemma implies that we can always replace $R$ by the diagonal matrix $\mbox{diag} (\lambda_{1},i\lambda_{2})$ when
evaluating $(\mf T F)(R),$ as long as $F$ is invariant under superunitary rotations,  {  and $\str R=a-ib\neq 0.$} 
\end{rem}

\subsection{Some useful identities}
\label{sec:susyid1}
For certain functions the transformed $\mf{T}F$ can be explicitely computed.
Let  $z\in \mathbb{C},$ with $\Re (z)>0.$ We abbreviate
\begin{equation}\label{eq:defgaussmes}
d\mu_{z} (R):=  \ud R\,  e^{-\frac{z}{2 }\str R^{2} } 
\end{equation}
the supergaussian measure with variance $z^{-1}.$ By direct computation 
\begin{equation}\label{eq:susygauss}
\int d\mu_{z} (R)= \int \ud R\,  e^{-\frac{z}{2} \str R^2}= \int \ud a \ud b \ud \ol{\rho }\ud \rho\   e^{-\frac{z}{2} (a^{2}+b^{2}+2\ol{\rho }\rho )}=1.
\end{equation}

\begin{lemma}
Let $\mathbf{1} (R):=1 $ be the constant function,   
and $\Omega_{\alpha} (R):= e^{-W\alpha\str R^{2}}$ -- the gaussian with $\alpha\in \mathbb{C},$ $\Re \alpha >0.$
Then
\begin{align}\label{eq:1func}
(\mf{T}\mathbf{1}) (R)&= e^{-V (R)}\\
(\mf{T} \Omega_{\alpha }) (R)&=e^{-V (R)}\,   \Omega_{\alpha \mu} (R)\label{eq:gaussfunc}
%(\mf{T} (P_{n}\cdot\Omega_{\alpha})) (R)&=e^{-V (R)}\, \mu_{\alpha }^{n} P_{n} (R) \Omega_{\alpha'} (R) \quad \mbox{for}\  n=1,2,3
\end{align}
where  $\mu:= \frac{W}{W+2\alpha}$. 
Moreover, for any function $F (R)$ such that $F$ is invariant under superunitary rotations,  $F (R)=f (\lambda)$, and
$f$ is a polynomial in $\lambda$ one has:
\begin{equation}\label{eq:trans}
(\mf{T} (\Omega_{\alpha}F)) (R) = e^{-V (R)}\,   \Omega_{\alpha \mu} (R) \int d\mu_{\frac{W^{2}}{\mu }} (R') F (R'+ \mu R)
\end{equation}
where we used the measure defined in \eqref{eq:defgaussmes} with $z=W^{2}/\mu$.
\end{lemma}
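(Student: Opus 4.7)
The three identities all rest on two elementary facts about the SUSY measure: (i) the Berezin measure $\ud R'=\ud a'\ud b'\ud\ol{\rho}{}'\ud\rho'$ is invariant under translation by any constant supermatrix $S$ of the form \eqref{eq:defR}, even one with Grassmann-valued entries; and (ii) the Gaussian normalization \eqref{eq:susygauss}, $\int d\mu_z(R)=1$ for $\Re z>0$. Fact (i) for the bosonic part is the usual translation invariance of Lebesgue measure; for the Grassmann part it follows because $\int\ud\ol{\rho}{}'\ud\rho'$ simply extracts the $\ol{\rho}{}'\rho'$ coefficient, which is unaffected by shifting $\rho'\to\rho'+\mu\rho$, $\ol{\rho}{}'\to\ol{\rho}{}'+\mu\ol{\rho}$ by nilpotent constants.

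Granted (i)--(ii), \eqref{eq:1func} is immediate: inside $(\mf{T}\mathbf{1})(R)$ substitute $R'\mapsto R'+R$, so the integrand reduces to $e^{-\frac{W^{2}}{2}\str(R')^{2}}$ and the integral is $1$ by \eqref{eq:susygauss} with $z=W^{2}$.

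For \eqref{eq:gaussfunc} and \eqref{eq:trans}, the strategy is a completion of the square. Using linearity of the supertrace,
\[
-\tfrac{W^{2}}{2}\str(R-R')^{2}-W\alpha\str(R')^{2}=-\tfrac{W^{2}}{2}\str R^{2}+W^{2}\str(RR')-\tfrac{W(W+2\alpha)}{2}\str(R')^{2}.
\]
Now translate $R'=R''+\mu R$ with $\mu=W/(W+2\alpha)$. A short calculation using the identity $\mu(W+2\alpha)=W$ shows that the cross-term linear in $R''$ cancels exactly, the coefficient of $\str R^{2}$ collects to $-W\alpha\mu$, and the coefficient of $\str(R'')^{2}$ becomes $-\tfrac{W^{2}}{2\mu}$. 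Hence
\[
(\mf{T}(\Omega_{\alpha}F))(R)=e^{-V(R)}\,\Omega_{\alpha\mu}(R)\int d\mu_{W^{2}/\mu}(R'')\,F(R''+\mu R),
\]
which is \eqref{eq:trans}. Specializing to $F\equiv 1$ and using \eqref{eq:susygauss} with $z=W^{2}/\mu$ gives \eqref{eq:gaussfunc}.

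The only genuinely non-routine aspect is justifying the shift $R'\mapsto R''+\mu R$ when $R$ carries Grassmann entries $\rho,\ol{\rho}$, which is why I singled out translation invariance of the Berezin integral at the outset. The polynomial hypothesis on $f$ plays no role in the algebra; it only guarantees, together with the Gaussian weight, that the integral over $R''$ in \eqref{eq:trans} converges absolutely and can be computed coefficient by coefficient in the expansion \eqref{eq:grassfunc}. I would present the computation by reducing to the identity \eqref{eq:trans} first (since it contains the other two as special cases) and then recording \eqref{eq:1func} and \eqref{eq:gaussfunc} as the cases $\alpha=0,F=1$ and $F=1$ respectively; the main obstacle is purely bookkeeping, namely keeping track of $\mu$ and $W+2\alpha$ through the completion of the square.
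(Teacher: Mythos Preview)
Your proof is correct and follows essentially the same approach as the paper: translate $R'\mapsto R'+R$ for \eqref{eq:1func}, and complete the square followed by the shift $R'\mapsto R''+\mu R$ for \eqref{eq:gaussfunc} and \eqref{eq:trans}. Your explicit remark about translation invariance of the Berezin integral under shifts by Grassmann-valued constants is a useful clarification that the paper leaves implicit.
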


%\begin{rem}Note that $\str R^{0}=\str \mathbbm{1}_{2} =0.$
%\end{rem}
\begin{rem}
Since $\Re \alpha  >0$ we have   $\Re \mu >0$ and $\Re \alpha \mu   >0.$
\end{rem}

\begin{proof}
The first identity above follows by translating all variables $R'\mapsto R'+R$ and applying \eqref{eq:susygauss}. 
For the second identity  we first complete the square, then perform the translation $R'\mapsto R'+\mu R:$
\[
e^{V (R)}(\mf{T}\Omega_{\alpha}) (R)=   e^{-W\alpha\mu  \str R^2}
\int \ud R'\,  e^{-\frac{ W^2 }{2\mu} \str (R'-\mu R)^2} = e^{-W\alpha \mu \str R^2},  
\]
The second equality follows from  \eqref{eq:susygauss},  since
$\frac{ W}{\mu_{\alpha }}=W+2\alpha $ has positive real part. 
Repeating the same arguments we get \eqref{eq:trans}.  
\end{proof}

The following identities will be useful later.
\begin{lemma}\label{le:someidentities}
For any  $z\in \mathbb{C},$ with $Re (z)>0$ and supermatrix $R$ the following identities  hold:
\begin{align*}
(a)\qquad & \int d\mu_{z} (R')\ [\str (R'+R)]^{n} = (\str R)^{n},\qquad \forall n\geq 1\\
(b)\qquad  & \int d\mu_{z} (R')\ \str (R'+R)^{n} = \str R^{n},\qquad n=2,3\\
(c)\qquad &\int d\mu_{z} (R')\ \str (R'+R)^{4}= \str R^{4} + \frac{2 }{z}   (\str R)^{2} \\
(d)\qquad & \int d\mu_{z} (R')\ [\str (R'+R)^{3}]^{2} =[\str (R)^{3}]^{2} +\frac{9 }{z}   \str R^{4} + \frac{9 }{z^{2}} (\str R)^{2} 
\end{align*}
\end{lemma}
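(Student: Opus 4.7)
\emph{Proof proposal.} I would prove all four identities by a unified direct calculation, using three ingredients. First, the polar decomposition \eqref{eqpolar}--\eqref{egvals} applied to the generic $2\times 2$ supermatrix $S = \bigl(\begin{smallmatrix}p & \bar\sigma \\ \sigma & iq\end{smallmatrix}\bigr)$ (with $p - iq \neq 0$) yields the explicit formula
\[
\str S^n = p^n - (iq)^n + \frac{n\bigl(p^{n-1}-(iq)^{n-1}\bigr)}{p-iq}\,\bar\sigma\sigma,
\]
which I specialize to $S = R' + R$, i.e.\ $p = a'+a$, $q = b'+b$, $\bar\sigma = \bar\rho'+\bar\rho$, $\sigma = \rho'+\rho$. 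Second, the standard bosonic Gaussian moments for $a', b'$ with variance $1/z$ (the fermionic normalization $\int d\bar\rho'\,d\rho'\,e^{-z\bar\rho'\rho'} = z/(2\pi)$ exactly cancels the bosonic normalization $2\pi/z$ from $\int da'\,db'\,e^{-z(a'^2+b'^2)/2}$). Third, the ``mixed'' Berezin identity
\[
\int d\mu_z(R')\,(\bar\rho'+\bar\rho)(\rho'+\rho)\,G(a',b') = \bigl(\bar\rho\rho - \tfrac{1}{z}\bigr)\langle G\rangle_{\mathrm{bos}},
\]
where $\langle\cdot\rangle_{\mathrm{bos}}$ is the normalized Gaussian average; this is immediate upon expanding $(\bar\rho'+\bar\rho)(\rho'+\rho)$ and observing that only $\bar\rho'\rho'$ (contributing $-\langle G\rangle_{\mathrm{bos}}/z$) and $\bar\rho\rho$ (contributing $\bar\rho\rho\,\langle G\rangle_{\mathrm{bos}}$) survive Berezin integration, the cross terms $\bar\rho'\rho$ and $\bar\rho\rho'$ vanishing by degree.

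For part (a), since $\str(R'+R) = (a'-ib') + (a-ib)$ is purely bosonic and linear, binomial expansion reduces the claim to $\int d\mu_z(R')(a'-ib')^k = 0$ for $k \geq 1$. This vanishes because, after stripping off the fermionic normalization, the bosonic integral $\int da'\,db'\,e^{-z(a'^2+b'^2)/2}(a'-ib')^k$ is zero by rotational invariance: in complex coordinates $w := a'-ib'$, $\int dw\,d\bar w\,w^k e^{-zw\bar w/2} = 0$ for $k \neq 0$.

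For parts (b), (c), (d), I plug the polar-decomposition formula into $\str(R'+R)^n$, split the integrand into its purely bosonic part and its $\bar\sigma\sigma$-part, and apply the second and third ingredients. The Gaussian moments produce $1/z$-corrections (e.g.\ $\int(a'+a)^4\,d\mu_z = a^4 + 6a^2/z + 3/z^2$), while the mixed identity contributes both a $\bar\rho\rho$-term that, by the same polar formula applied in reverse to $R$ itself, reassembles to the Grassmann part of $\str R^n$, and a $-1/z$-term that combines with the Gaussian corrections. For (b) these pieces cancel to give exactly $\str R^n$ ($n=2,3$). For (c) the residue is $2(\str R)^2/z$, after noting $(\str R)^2 = (a-ib)^2 = a^2 - b^2 - 2iab$. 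For (d), the preliminary identity $(\bar\sigma\sigma)^2 = 0$ (one-line check: $\bar\sigma^2 = \sigma^2 = 0$ since each is a sum of two distinct Grassmann generators, hence $\bar\sigma\sigma\bar\sigma\sigma = -\bar\sigma\bar\sigma\sigma\sigma = 0$) gives $[\str(R'+R)^3]^2 = (p^3+iq^3)^2 + 6(p^3+iq^3)(p+iq)\bar\sigma\sigma$; collecting Gaussian moments by powers of $1/z$ and $\bar\rho\rho$ produces the three claimed contributions.

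The computation is entirely routine once the polar-decomposition formula is in hand; the only potential obstacle is careful bookkeeping of Grassmann signs and reassembly of bosonic polynomials into the supertrace functionals $\str R^n$ and $(\str R)^n$. There is no conceptual difficulty beyond the mixed Berezin identity.
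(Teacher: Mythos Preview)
Your approach is correct and genuinely different from the paper's. The paper first reduces to the case where $R$ is diagonal, using that $d\mu_z$ and $\str$ are invariant under the superunitary conjugation $R'\mapsto U^{-1}R'U$; with $R=\mathrm{diag}(\lambda_1,i\lambda_2)$ it then expands $\str(R'+R)^n$ as a sum of traces of words in $R$ and $R'$ and evaluates each monomial by a mix of parity ($R'\to -R'$), the localization identity $\int d\mu_z(R')F(R')=F(0)$, and short direct computations of matrix entries of $(R')^2$ and $\str(RR')^2$. Your route instead keeps $R$ general and exploits the closed formula $\str S^n = p^n-(iq)^n + n\,\tfrac{p^{n-1}-(iq)^{n-1}}{p-iq}\,\bar\sigma\sigma$ applied to $S=R'+R$, together with the scalar Gaussian moments and your ``mixed Berezin identity'' $\int d\mu_z(R')\,\bar\sigma\sigma\,G=(\bar\rho\rho-z^{-1})\langle G\rangle_{\mathrm{bos}}$; the same polar formula for $\str R^n$ then lets you reassemble the answer. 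The paper's method is more structural (symmetry reduction plus supersymmetric localization), while yours is more mechanical and uniform: once the two ingredients are recorded, each identity is a single Gaussian computation with no case-by-case expansion of matrix words. Both handle the apparent singularity at $p=iq$ the same way, since $(p^{n-1}-(iq)^{n-1})/(p-iq)$ is a polynomial.
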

\begin{proof}
Without loss of generality we can assume $R$ to be diagonal $R=\mbox{diag} (\lambda_{1},i\lambda_{2}).$ Indeed by superunitary rotations {  any $R=\Big(\begin{smallmatrix} a & \ol\rho \\ \rho & ib  \end{smallmatrix}\Big) $ with $(a,b)\neq (0,0)$
can be reduced to a diagonal supermatrix. Finally the case $(a,b)= (0,0)$ can be recovered from  $(a,b)\neq (0,0)$ by continuity
remarking that both sides of the identities above are polynomials in the variables $a$ and $b.$}

\textbf{(a)} We have $ [\str (R'+R)]^{n}=  [\str R'+ \str R)]^{n}= \sum_{j=0}^{n} C_{j}^{n} (\str R')^{j} (\str R)^{n-j}.$
By identity \eqref{eq:localizationid} $\int d\mu_{z} (R')\  (\str R')^{j}=  (\str 0)^{j}=0$ $\forall j>0.$

\textbf{(b)}
For $n=2$ we have
\[
\int d\mu_{z} (R')\  \str (R'+R)^{2} = \str R^{2} + 2\int d\mu_{z} (R')\  \str (R'R)+ \int d\mu_{z} (R')\   \str (R')^{2}.
\]
By \eqref{eq:localizationid} $\int d\mu_{z} (R')\   \str (R')^{2}=0$ and the remaining integral
vanishes by parity  under $R'\to -R'.$
For $n=3$ we have
\[
\str (R'+R)^{3} = \str R^{3} + 3 \str (R'R^{2})+ 3  \str R(R')^{2}+ \str (R')^{3}.
\]
The second and fourth terms are  odd under $R'\to -R',$ hence the corresponding integrals vanish.
Note that
\[
(R')^{2}=
\Big(\begin{smallmatrix} {a'}^{2}+\ol{\rho }'\rho'  & (a'+ib')\ol{\rho}' \\ (a'+ib')\rho'  & -{b'}^{2}-\ol{\rho}'\rho'
\end{smallmatrix}\Big).
\]
Direct computation shows that the integral of each matrix component equals zero.

\textbf{(c)} Expanding we have
\[
\str (R'+R)^{4} = \str R^{4} + 4 \str {R'}^{3}R + 4  \str R^{3}R'+ 2\str (RR')^{2} +4 \str R^{2}{R'}^{2} + \str {R'}^{4}.
\]
The second and third term are odd, i.e.\ change sign  under $R'\to -R'$, hence the corresponding integrals vanish.
The integral of the first term vanish by  \eqref{eq:localizationid}. 
Since the integral of each matrix component in ${R'}^{2}$ equals zero also the fifth term disappears.
It remains  to consider  $\str (RR')^{2}.$ Since $R$ is assumed to be diagonal we have
\[
\str (RR')^{2}= (a'\lambda_{1})^{2}+ 2\ol{\rho }'\rho' \lambda_{1}i\lambda_{2}+ (b'i\lambda_{2})^{2}
\]
By direct computation
\[
 \int d\mu_{z} (R')\ \str (RR')^{2}= \frac{1}{z}\left[ \lambda_{1}^{2}+ (i\lambda_{2})^{2}- 2  \lambda_{1}i\lambda_{2}  \right]
= \frac{[\lambda_{1}- (i\lambda_{2})]^{2}}{z}= \frac{(\str R)^{2}}{z}.
\]

\textbf{(d)} Using $(b)$ we have $ \int d\mu_{z} (R')\ [\str (R'+R)^{3}]^{2}=  [\str R^{3}]^{2}+  \int d\mu_{z} (R')\ X$
where 
\begin{align*}
X&=  [\str (R'+R)^{3}- \str R^{3}]^{2}= [  3 \str (R'R^{2})+ 3  \str R(R')^{2}+ \str (R')^{3} ]^{2}\\
&=
 [\str (R')^{3}]^{2} + 9  [\str (R'R^{2})]^{2}+ 9 [ \str R(R')^{2}]^{2}+ 6 \str (R')^{3}  \str (R'R^{2})\\
& \quad + 6  \str (R')^{3}   \str R(R')^{2}+ 18  \str (R'R^{2})  \str R(R')^{2}
\end{align*}
The integral of the last two term vanish by parity while the integral of the first term vanishes by  \eqref{eq:localizationid}.
Since $R$ is diagonal we have $  \str (R'R^{2})= a' (\lambda_{1})^{2}- ib' (i\lambda_{2})^{2}.$
Hence
\[
 \int d\mu_{z} (R')\  [ \str R'(R)^{2}]^{2}= \frac{1}{z}[ (\lambda_{1})^{4}- (i\lambda_{2})^{4} ]= \frac{\str R^{4}}{z}
\]
Similarly $\str R(R')^{2}= {a'}^{2}\lambda_{1}+{b'}^{2} (i\lambda_{2})+\ol{\rho }'\rho' (\lambda_{1}+i\lambda_{2}).$ Hence
\[
 \int d\mu_{z} (R')\   [\str R(R')^{2})]^{2}= \frac{(\lambda_{1}-i\lambda_{2})^{2}}{z^{2}}= \frac{(\str R)^{2}}{z^{2}} 
\]
Finally $\str {R'}^{3}= {a'}^{3}- (ib')^{3}+\ol{\rho }'\rho' 3(a'+ib').$ Direct computation yields 
\[
 \int d\mu_{z} (R')\  \str (R')^{3}  \str (R'R^{2})= 0.
\]
This concludes the proof.
\end{proof}

As a direct consequence of this lemma, we can compute exactly the action of  $\mf{T}F$ for a certain functions.
This is done in the following corollary.
\begin{cor}\label{cor:exactformulas}
Let $\Re \alpha > 0$, and let  that $\Omega_{\alpha } (R)= e^{-W\alpha \str {R}^{2}}$, $\mu = W/(W + 2\alpha)$ as before. For any supermatrix $R$ 
\begin{align}
\mf{T}\Big (\Omega_{\alpha} (R') [\str R']^{n}  \Big ) (R) &= e^{-V (R)} \Omega_{\alpha \mu } (R)\ 
%e^{- (V (R)+ W\alpha \mu \str {R}^{2})}
\mu^{n} [\str R]^{n} \qquad \forall n\geq 1\\
\mf{T}\Big (\Omega_{\alpha} (R')  \str {R'}^{n}  \Big ) (R) &=
 e^{-V (R)} \Omega_{\alpha \mu } (R)\  \mu^{n} \str R^{n} \qquad n=2,3\\
\mf{T}\Big (\Omega_{\alpha } (R')  [\str R']^{4}  \Big ) (R) &=
 e^{-V (R)} \Omega_{\alpha \mu } (R)\ 
\left[ \mu^{4} \str R^{4} + \frac{2 \mu^{3} }{W^{2}}   (\str R)^{2}   \right] \\
\mf{T}\Big (\Omega_{\alpha } (R')  [\str {R'}^{3}]^{3}  \Big ) (R) &=
 e^{-V (R)} \Omega_{\alpha \mu } (R)\ 
\left[ \mu^{6} [\str (R)^{3}]^{2} +\frac{9\mu^{5} }{W^{2}}   \str R^{4} + \frac{9 \mu^{4}}{W^{4}} (\str R)^{2}
  \right]
\end{align}
\end{cor}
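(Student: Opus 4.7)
The proof is a direct chase through the two ingredients already prepared: the exact formula \eqref{eq:trans} for $\mf{T}(\Omega_\alpha F)$, and the four integral identities of Lemma \ref{le:someidentities}. There is essentially no obstacle — the content is bookkeeping of the scaling factors — but let me lay out the plan so the pattern is clear.

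The plan is to take each $F$ of interest (a monomial of supertraces of $R'$), write it as $F(R') = \Omega_\alpha(R') \cdot P(R')$, invoke \eqref{eq:trans} to get
\[
\mf{T}(\Omega_\alpha P)(R) = e^{-V(R)}\, \Omega_{\alpha\mu}(R) \int d\mu_{W^2/\mu}(R')\, P(R' + \mu R),
\]
and then recognize the remaining superintegral as one of the cases of Lemma \ref{le:someidentities}, but with the supermatrix $R$ in that lemma replaced by $\mu R$ and the parameter $z$ set to $W^2/\mu$.

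Concretely: for the first line I take $P(R') = [\str R']^n$, so $P(R'+\mu R) = [\str(R'+\mu R)]^n$, and part (a) of Lemma \ref{le:someidentities} gives $[\str(\mu R)]^n = \mu^n[\str R]^n$. For the second line I take $P(R') = \str{R'}^n$ with $n \in \{2,3\}$ and invoke part (b) to obtain $\str(\mu R)^n = \mu^n \str R^n$. For the third line I take $P(R') = \str{R'}^4$; part (c) with $z = W^2/\mu$ and $R \mapsto \mu R$ gives
\[
\str(\mu R)^4 + \frac{2\mu}{W^2}\bigl(\str(\mu R)\bigr)^2 = \mu^4 \str R^4 + \frac{2\mu^3}{W^2}(\str R)^2,
\]
exactly the claimed expression. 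For the fourth line $P(R') = [\str {R'}^3]^2$ and part (d), after the same substitution, yields
\[
[\str(\mu R)^3]^2 + \frac{9\mu}{W^2}\str(\mu R)^4 + \frac{9\mu^2}{W^4}\bigl(\str(\mu R)\bigr)^2 = \mu^6 [\str R^3]^2 + \frac{9\mu^5}{W^2}\str R^4 + \frac{9\mu^4}{W^4}(\str R)^2,
\]
which matches the stated right-hand side.

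The only step requiring any care is making sure the Gaussian measure in \eqref{eq:trans} has the correct variance parameter $z = W^2/\mu$ so that the factors of $1/z$ in parts (c) and (d) of Lemma \ref{le:someidentities} produce precisely $\mu/W^2$ and $\mu^2/W^4$, and then tracking how each $\mu$ coming from the translation $R \mapsto \mu R$ combines with the $1/z$ factors to give the powers of $\mu$ displayed. Since $\Re\alpha > 0$ guarantees $\Re(W^2/\mu) > 0$, all Gaussian integrals converge and the application of Lemma \ref{le:someidentities} is legitimate; this completes the proof.
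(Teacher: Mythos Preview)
Your proof is correct and follows exactly the approach of the paper, which simply says ``Combine identity \eqref{eq:trans} with Lemma~\ref{le:someidentities}.'' You have merely spelled out the substitution $R\mapsto\mu R$, $z=W^2/\mu$ and tracked the resulting powers of $\mu$, which is precisely what that one-line proof entails.
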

\begin{proof}
Combine identity \eqref{eq:trans} with  Lemma \ref{le:someidentities}.
\end{proof}

\subsection{Transfer operator in polar coordinates}

In the following we will consider only functions $R\mapsto F (R)$ taking values  in even elements of
the Grassmann algebra and such that $F (R)$  contains no Grassmann generators
 except $\xi,\eta$ (equivalently $\bar\rho,\rho$). If such  $F$ is invariant under superunitary rotations
then  the corresponding function $f$ maps  $\mathbb{R}^{2}$ to $\mathbb{R}.$
Our goal is to write $\mf{T}F$ as an operator $ \mc{T}$ acting on $f$ instead. This requires to 
change coordinates in the integral $(a,b,\ol{\rho},\rho)\mapsto (\lambda_{1},\lambda_{2},\eta,\xi).$ The operation
is well defined only for functions that vanish at the origin:  if $F(R)$ is a ''nice''
function  (we will make this precise below) with $F(0) = 0 $, then by Berezin integration formula
(cf. \cite{berezin2013introduction})
\begin{equation*}\label{eq:polarchange}
\begin{split}
\int \ud R \, F(R)= \int \ud a\ud b\ud \ol{\rho }\ud \rho F (R)=
\int \ud\lambda_1\ud\lambda_2\ud\xi\ud\eta \frac{F\big(R(\lambda,\eta,\xi)\big)}{\Ber} ~,
\end{split}
\end{equation*}
where ${\Ber} = (\lambda_1-i\lambda_2)^2$ is the so-called Berezinian (the super-analog of the Jacobi determinant),
and $a,b,\lambda_1,\lambda_2 $ are real integration variables. The function appearing in the integral \eqref{eq:Tsusydef} defining $\mf{T}F$
is $e^{-\frac{W^{2}}{2}\str (R-R')^{2}}F (R')$ which does not necessary vanish in $0.$ To solve the problem one can decompose the integral as follows:
\[
e^{-\frac{W^{2}}{2}\str (R-R')^{2}}F (R')= \tilde{F}_{1} (R')+  \tilde{F}_{2} (R'),
\]
where the functions $\tilde{F}_{1} (R'),\tilde{F}_{2} (R')$ must satisfy: \textit{(a)} both functions are 'nice', in particular
integrable, \textit{(b)} $\tilde{F}_{1} (0)=0$ so that we can apply the change of coordinates and
 \textit{(c)} the integral $\int \ud R'\, \tilde{F}_{2} (R')$ is easy to compute without any coordinate change.
When $F$ is invariant under superunitary rotations, the following two choices are especially convenient.
\begin{align*}
e^{-\frac{W^{2}}{2}\str (R-R')^{2}}F (R')&= e^{-\frac{W^{2}}{2}\str (R-R')^{2}}[F (R')-F (0)] + e^{-\frac{W^{2}}{2}\str (R-R')^{2}}F (0),\\
e^{-\frac{W^{2}}{2}\str (R-R')^{2}}F (R')& =e^{-\frac{W^{2}}{2}\str (R-R')^{2}}\left[1-e^{-W^{2}\str RR'}\right]F (R') +
e^{-\frac{W^{2}}{2}\str (R^{2}+{R'}^{2})}F (R').
\end{align*}
In both cases the function $\tilde{F}_{1}$ vanishes at $0.$ Moreover the function $\tilde{F}_{2}$ can be evaluated
exactly
\begin{equation}\label{eq:susylocal}
\int \ud R'\, e^{-\frac{W^{2}}{2}\str (R-R')^{2}}F (0)= \int \ud R'\, e^{-\frac{W^{2}}{2}\str {R'}^{2}}F (R')=F (0),
\end{equation}
where the first integral can be performed directly as the one in \eqref{eq:1func}, while the second
integral is a consequence of the rotation symmetry (cf. Remark~\ref{rem:susyid} below).
We will see below these choices translate in two equivalent representations of the operator
 $\mc{T}$ acting on $f.$
%In this paper we will use the second option, which has the additional advantage of replacing $\mf{T}$ by a new
%transfer operator with integral kernel $e^{-V (R)}e^{-\frac{W^{2}}{2}\str (R-R')^{2}}\left[1-e^{-W^{2}\str RR'}\right] $
%thus avoiding the problem of studying later $f (\lambda)-f (0)$ for more general test functions.
We first need some notation.
For $f:\mathbb{R}^{2}\to \mathbb{R}$ we define
\begin{equation}\label{eq:defdeltaW}
\begin{split}
(\delta_0f)(\lambda) &:= f(0) \\
(\delta_{W^2}^\star f)(\lambda) &:=  \frac {W^2}{2\pi}
\int_{\mathbb R^2}  e^{-\half W^2  (\lambda'-\lambda)^2 }  f(\lambda')  \ud\lambda_1'\ud\lambda_2 '~.
\end{split}
\end{equation}
where 
\[
 (\lambda'-\lambda)^2 = (\lambda_{1}-\lambda_{1}')^{2}+ (\lambda_{2}-\lambda_{2}')^{2}
\]
coincides with $\str (R'-R)^{2}$ when both matrices are diagonal. Moreover  we denote by  $\Lambda$ the multiplication by
\[
\Lambda = \lambda_1-i\lambda_2=\str R 
\]
(from the $\Ber$ term in \eqref{eq:polarchange}) and
by $\bar \Lambda $  the multiplication by
$\bar \Lambda = \lambda_1+i\lambda_2$ so that
\[
(e^{-\bar \Lambda \Lambda }f) (\lambda )= e^{-\lambda^{2}}f (\lambda).
\]
Finally we denote by  $e^{-V} $ --- the  multiplication by $e^{-V (\lambda )}$ (cf. \eqref{eq:Vpol})

\begin{prop}\label{propsusy} 
 Assume $R\mapsto F (R)$ takes values in even elements of the Grassmann algebra and $F (R)$
 contains no Grassmann generators except $\xi,\eta$ (equivalently $\bar \rho, \rho$, since $\eta=\bar \rho (a-ib)^{-1}$ and
  $\xi=-\rho (a-ib)^{-1},$ cf.\ \eqref{egvals}). Assume further that  $F$ is invariant
 under superunitary rotations $F(R) = f(\lambda) $  so that
 \[
 f\in C^1(\mb R^2)~, \quad |f(\lambda)| \leq \exp(K |\lambda^2|)~,
 \]
where $K < W^2/2$. Then $\mf{T}F$ is also rotation invariant. Moreover, $\mf{T}$ can be represented as
 an operator acting  on  $C^1(\mb R^2) $ $(\mf TF) (R) = (\mc Tf)(\lambda),$ with
%Moreover $(\mc Tf)(\lambda)$ can be written as as follows
\begin{align}\label{eq:defTf}
 \mc{T}& := e^{-V}e^{-\frac {W^2}2\bar \Lambda \Lambda }\delta_0 + T~, \quad T := e^{-V}\Lambda \delta_{W^2}^\star \Lambda^{-1} = e^{-V}\delta_0 +e^{-V} T (\mathrm{id}-\delta_{0})
 \end{align}
% In particular
% \begin{equation}\label{eq:Tsusyidentity}
%(Tf) (\lambda ) = e^{-V (\lambda)} \int \ud R' \ e^{-\frac{W^{2}}{2}\str (\hat{\lambda}-R')^{2}}\left[1-e^{-W^{2}\st%r \hat{\lambda}R'}\right]F (R').
%\end{equation}
\end{prop}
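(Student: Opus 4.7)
The rotation invariance of $\mf T F$ is immediate from Lemma~\ref{le:unitaryinv}, so by Remark~\ref{rem:trans1} it suffices to evaluate $(\mf T F)(R)$ at diagonal $R = \mathrm{diag}(\lambda_1, i\lambda_2)$. The plan is to split the Gaussian kernel in \eqref{eq:Tsusydef} according to the second of the two decompositions displayed just above \eqref{eq:defdeltaW}:
\[
e^{-\tfrac{W^2}{2}\str(R-R')^2}F(R') = e^{-\tfrac{W^2}{2}(\str R^2+\str R'^2)}F(R') + e^{-\tfrac{W^2}{2}\str(R-R')^2}\bigl[1-e^{-W^2\str RR'}\bigr]F(R').
\]
For the first summand, the factor $e^{-\frac{W^2}{2}\str R^2} = e^{-\frac{W^2}{2}\bar\Lambda\Lambda}$ pulls out and the supersymmetric localisation identity \eqref{eq:susylocal} reduces the remaining $R'$-integral to $F(0) = f(0)$; multiplying by $e^{-V(R)} = e^{-V(\lambda)}$ produces the summand $e^{-V}e^{-\frac{W^2}{2}\bar\Lambda\Lambda}\delta_0$ of $\mc T$.

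For the second summand, the bracket $[1-e^{-W^2\str RR'}]$ forces the integrand to vanish at $R'=0$, which legitimises the Berezinian change of variables. Writing $R' = U_{\eta',\xi'}^{-1}\mathrm{diag}(\lambda_1',i\lambda_2')U_{\eta',\xi'}$ and replacing $\ud R'$ by $d\lambda_1'd\lambda_2'd\xi'd\eta'/(\Lambda')^2$, cyclicity of $\str$ together with direct computation of $URU^{-1}$ for diagonal $R$ gives
\[
\str RR' = \lambda_1\lambda_1' + \lambda_2\lambda_2' + \Lambda\Lambda'\eta'\xi', \qquad \str(R-R')^2 = (\lambda-\lambda')^2 - 2\Lambda\Lambda'\eta'\xi'.
\]
Substituting into the exponentials and using $(\eta'\xi')^2=0$, the integrand simplifies to
\[
e^{-\tfrac{W^2}{2}(\lambda-\lambda')^2}\bigl[(1 - e^{-W^2(\lambda_1\lambda_1'+\lambda_2\lambda_2')}) + W^2\Lambda\Lambda'\eta'\xi'\bigr]f(\lambda').
\]
Berezin integration over $\eta',\xi'$ picks out the $W^2\Lambda\Lambda'$ coefficient with the standard $(2\pi)^{-1}$ normalisation; after cancelling one factor of $\Lambda'$ against the Berezinian and integrating over $\lambda_1',\lambda_2'$, what remains is $\Lambda(\lambda)\cdot(\delta_{W^2}^\star\Lambda^{-1}f)(\lambda)$. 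Multiplied by $e^{-V}$, this is $Tf$, completing the derivation of the main formula $\mc T = e^{-V}e^{-\frac{W^2}{2}\bar\Lambda\Lambda}\delta_0 + T$.

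The principal technical point is the meticulous bookkeeping in the polar change of variables — reconciling the Berezinian $(\Lambda')^2$, the $(2\pi)^{-1}$ prefactor of the Berezin integral, and the signs induced by Grassmann ordering. The alternative representation displayed for $T$ in the statement corresponds to running the same calculation with the first decomposition $e^{-\frac{W^2}{2}\str(R-R')^2}F(R') = e^{-\frac{W^2}{2}\str(R-R')^2}[F(R')-F(0)] + e^{-\frac{W^2}{2}\str(R-R')^2}F(0)$: now the constant piece integrates (after a translation $R'\mapsto R'+R$) directly to $f(0)$, and the polar piece effectively applies $T$ to $f-f(0)$, circumventing the singularity of $\Lambda^{-1}$ since $f-f(0)$ vanishes at the origin by the $C^1$-hypothesis. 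The equivalence of the two forms reduces to the identity $\Lambda(\delta_{W^2}^\star\Lambda^{-1}\mathbf 1)(\lambda) = 1 - e^{-\frac{W^2}{2}\bar\Lambda\Lambda}$, which can be checked by a direct Gaussian calculation and also follows a posteriori by applying $\mc T$ to the constant function $\mathbf 1$ and matching against $\mf T\mathbf 1 = e^{-V}$ from \eqref{eq:1func}.
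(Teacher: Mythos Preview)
Your route is genuinely different from the paper's. The paper never changes to polar coordinates on the $R'$ side; it stays in $(a',b',\bar\rho',\rho')$, expands $f(\lambda_1',\lambda_2')$ in the nilpotent $\bar\rho'\rho'$, integrates out the Grassmann variables by hand, and then reorganises the resulting bosonic integrals using the Cauchy--Pompeiu formula~\eqref{eq:stokes}; the boundary term $f(0)e^{-\frac{W^2}{2}\lambda^2}$ arises as the residue at the origin. You instead invoke the Berezinian change of variables on $R'$ and compute $\str RR'$ in polar form, which makes the Grassmann integration a one-liner and hides the Stokes step inside the general Berezin machinery. Both land on the same formula; your version is more geometric, the paper's more self-contained.

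There is, however, a logical circularity in your argument as written. For the first summand you invoke the localisation identity~\eqref{eq:susylocal}, specifically the equality $\int\ud R'\,e^{-\frac{W^2}{2}\str R'^2}F(R')=F(0)$. In the paper's order this identity is \emph{derived from} Proposition~\ref{propsusy} (see the sentence immediately following~\eqref{eq:susylocal} and Remark~\ref{rem:susyid}), so you cannot cite it here. The fix is quick and fits your framework: apply your polar change directly to $e^{-\frac{W^2}{2}\str R'^2}[F(R')-F(0)]$, which vanishes at the origin; since both $\str R'^2$ and $F(R')=f(\lambda')$ are rotation-invariant, the integrand carries no $\eta',\xi'$ at all, so the Berezin integral over $\eta',\xi'$ gives zero. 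Together with $\int\ud R'\,e^{-\frac{W^2}{2}\str R'^2}=1$ from~\eqref{eq:susygauss} this yields the localisation identity independently, and your argument then goes through. Alternatively, swap the roles of the two decompositions: derive the main formula from the first decomposition (whose constant piece needs only~\eqref{eq:susygauss}) and relegate the second to the alternative representation.
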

\begin{rem}
Note that $(\mc Tf)(0) = f(0)$, thus if $f$ vanishes at
the origin, so does $\mc Tf$. For $f$ vanishing at the origin we have then $ \mc{T}f= Tf.$
\end{rem}

\begin{rem}\label{rem:susyid}
A direct consequence of this result is the following localization identity:
\begin{equation}\label{eq:localizationid}
\int \ud R'\,  e^{-\frac{W^{2}}{2}\str {R'}^{2}} F (R')= (\mf{T}F) (0)= (\mc{T}f) (0)= f (0)=F (0).
\end{equation}
\end{rem}
%\begin{rem}
%We will see in the proof that the representation above is the natural one.
%Indeed we could consider the following alternative decomposition
%$[1-e^{-W^{2}\str (RR')}G (R')]+ G (R')e^{-W^{2}\str (RR') } $ with $G$ any 'nice'
%function invariant under superunitary rotations and such that $G (0)=1.$ 
%The resulting transfer operator acting on $f$ is independent of $G$ and coincides with $\mc{T}$ above,
%thus justifying our choice $G=1.$
%\end{rem}
The above decomposition appears already  in \cite{constantinescu1988supersymmetric} and 
is a special case of the general framework of Berezin integration
 \cite{berezin2013introduction} applied to the conjugation supersymmetry at hand.
%In particular the identity  \eqref{eq:Tsusyidentity} 
%can be obtained performing a coordinate change  as in \eqref{eq:polarchange} and performing the integral
%over the $\xi,\eta$ variables.
Generalization to more complicated supersymmetries has been an important theoretical tool
in condensed matter physics. All such results are proven by an inspired sequence of elementary
applications of Stokes' theorem.
We give a proof of the present simple case for the convenience of the reader.
\begin{proof}
Let $F$ be as above. Then setting $\hat{\lambda}=\mbox{diag} (\lambda_{1},i\lambda_{2})$ we
get from Lemma \ref{le:unitaryinv}
\begin{align*}
(\mf{T}F) (R)&= (\mf{T}F) (\hat{\lambda})= e^{-V (\lambda )}
\int \ud R'\, e^{-\frac{W^2}2 \str(\hat{\lambda}-R')^2} F (R')\\
&=  e^{-V (\lambda )}
\int \ud R'\, e^{-\frac{W^2}2 \str(\hat{\lambda}-R')^2} f (\lambda'_{1},\lambda'_{2})\\
&=  e^{-V (\lambda )}
\int \ud R'\, e^{-\frac{W^2}2 \str(\hat{\lambda}-R')^2}  f\big(a'+ \tfrac{\ol\rho'\rho'}{a'-ib'},b'-i \tfrac{\ol\rho'\rho'}{a'-ib'}\big)\\
&=  e^{-V (\lambda )}
\int \ud R'\, e^{-\frac{W^2}2 \str(\hat{\lambda}-R')^2}  f\big(a'+ \tfrac{\ol\rho'\rho'}{a'-ib'},b'-i \tfrac{\ol\rho'\rho'}{a'-ib'}\big)\\
\end{align*}
The arguments of $f$ are even elements of the Grassmann algebra, hence
\[
  f\big(a'+ \tfrac{\ol\rho'\rho'}{a'-ib'},b'-i \tfrac{\ol\rho'\rho'}{a'-ib'}\big)=
    f (a',b')+  \tfrac{\ol\rho'\rho'}{a'-ib'}  (\partial_{a'}-i\partial_{b'})f\big(a',b')
\]
Moreover 
\[
e^{-\frac{W^2}2 \str(\hat{\lambda}-R')^2} = e^{-\frac{W^2}2 ( (a'-\lambda_{1})^{2}+ (b'-\lambda_{2})^{2}+2\ol{\rho}'\rho')}
= e^{-\frac{W^2}2 ( (a'-\lambda_{1})^{2}+ (b'-\lambda_{2})^{2})}(1-W^{2}\ol{\rho }'\rho')
\]
Inserting all this in the integral we obtain
\begin{align*}
e^{V (\lambda)} (\mf{T}F) (\hat{\lambda})&=
W^{2}\int \tfrac{\ud a' \ud b'}{2\pi}  e^{-\frac{W^2}2 ( (a'-\lambda_{1})^{2}+ (b'-\lambda_{2})^{2})}f (a',b')\\
&- \int \frac{\ud a' \ud b'}{2\pi}  e^{-\frac{W^2}2 ( (a'-\lambda_{1})^{2}+ (b'-\lambda_{2})^{2})}
\tfrac{1}{a'-ib'}  (\partial_{a'}-i\partial_{b'})f\big(a',b')\\
&=I_{1}-I_{2}
\end{align*}
The second integral can be reorganized as follows
\begin{align*}
I_{2}&=\int \tfrac{\ud a' \ud b'}{2\pi}\tfrac{1}{a'-ib'} (\partial_{a'}-i\partial_{b'})
\left[e^{-\frac{W^2}2 ( (a'-\lambda_{1})^{2}+ (b'-\lambda_{2})^{2})} f\big(a',b') \right]\\
& + W^{2}\int \tfrac{\ud a' \ud b'}{2\pi}\tfrac{1}{a'-ib'}
[(a'-ib')- (\lambda_{1}-i\lambda_{2})]
\left[e^{-\frac{W^2}2 ( (a'-\lambda_{1})^{2}+ (b'-\lambda_{2})^{2})} f\big(a',b') \right]\\
 &= - f (0,0)e^{-\frac{W^2}2 ( \lambda_{1}^{2}+ \lambda_{2}^{2})} + I_{1} - \left(\Lambda \delta_{W^2}^\star \Lambda^{-1}f\right) (\lambda).
\end{align*}
In the first line we use Stokes theorem in the form of the Cauchy--Pompeiu [= Cauchy--Green] formula
\begin{equation}\label{eq:stokes}
\begin{split}
\int_{|\zeta| \leq r} \frac{da \, db}{a-ib}  2 (\bar\partial g)(a,b) = - 2\pi g(0)+ \oint_{|\zeta|=r}  \frac{d\zeta}{\zeta} g(\zeta,\bar\zeta )~, \quad \zeta = a+ib~, \,\, \bar\partial = \frac{\partial_a - i \partial_b}{2}.
\end{split}
\end{equation}
where the first term on the right hand side vanishes in the limit $r \to \infty$ as long as our test function $f (\lambda)$ does not
increase too fast. This concludes the proof of the first representation for $\mc{T}.$ The second one follows from
\[
I_{1}=f (0,0)+ W^{2}\int \tfrac{\ud a' \ud b'}{2\pi}  e^{-\frac{W^2}2 ( (a'-\lambda_{1})^{2}+ (b'-\lambda_{2})^{2})}[f (a',b')-f (0,0)]=f (0,0)+\tilde{I}_{1}
\]
and $ (\partial_{a'}-i\partial_{b'})f\big(a',b')= (\partial_{a'}-i\partial_{b'})[f\big(a',b')-f (0,0)].$ 
\end{proof}

Recall that $G[H_N] (E_{\eps}):= (E+i\eps-H)^{-1}$ and that $J_{xy}$ are the matrix elements of $J = -W^2 \Delta_N + \mathbbm{1}$, and set
\begin{equation}
\begin{split}
&
I_{x, y_{1},\dotsc ,y_{q}}^{m,n_{1},\dotsc ,n_{q}}:=
\int \tfrac{\ud\lambda_1\ud\lambda_2}{2\pi }\, 
 \big[ \mc T^{x-y_{m}}\Lambda^{n_{m}}\dotsb \mc T^{y_{2}-y_{1}}\Lambda^{n_{1}}  \mc T^{y_{1}-1}e^{-V}](\lambda )\\
&\qquad \times [e^{V(\lambda )} \,\Lambda^{-1}] \  \big[\mc T^{y_{m+1}-x} \Lambda^{n_{m+1}} \mc T^{y_{m+2}-y_{m+1}} \dotsb \Lambda^{n_{q}} 
\mc T^{N-y_{q}}e^{-V}](\lambda )\end{split}
\end{equation}
where $q,m,n_{i},x,y_{i}$ are defined as in \eqref{eq:defTmultiple}.
Also set, for $x\leq x'$, 
\[
I_{xx'}:= 
\frac{1}{2\pi }
\int \frac{\ud\lambda_1\ud\lambda_2}{\lambda_1-i\lambda_2}\, e^{V(\lambda)} 
\big[\mc T^{x-1}e^{-V}\big ](\lambda) \, \big[\mc{T}^{x'-x} \Lambda \mc T^{N-x'}e^{-V}\big ] (\lambda ). \]
and for $x' < x$ set $I_{xx'} = I_{N-x+1,N-x'+1}$.
As we see from the proof below, these definitions are consistent with \eqref{eq:defTmultiple} and  \eqref{eq:transfa}.
\begin{cor}\label{corfinalrep} 
For $y,y' = 1, \cdots, N$, we have the following representation.
\begin{align}\label{finalrep}
&\lim_{\eps\searrow 0}\big\langle G_{yy}[H_N](E_\eps)\big\rangle = \mc E  +\sum_{x=1}^{N}J_{yx}
\frac{1}{2\pi }\int  \frac{\ud\lambda_1\ud\lambda_2}{\lambda_1-i\lambda_2}\, e^{V(\lambda)}
\big[\mc T^{x-1}e^{-V}](\lambda)\, \big[\mc T  ^{N-x}e^{-V}\big ](\lambda) \\
&\lim_{\eps\searrow 0}\langle G_{yy'}[H_N](E_\eps)G_{y'y}[H_N](E_\eps) \rangle =
-J_{yy'} +\hspace{-0,2cm}\sum_{xx'=1}^{N}\hspace{-0,2cm} J_{yx} J_{y'x'}  I_{xx'}
\end{align}
For $n>1$ we have:
\begin{equation}
\begin{split}
\lim_{\eps\searrow 0}\partial_{E}^{n}\big\langle \tr G[H_N](E_\eps)\big\rangle &= (-1)^{n}
 \sum_{q=1}^{n} 
\sum_{\substack{n_{1},\dotsc n_{q}\geq 1 \\  n_{1}+\dotsb+ n_{q}=n}} \tfrac{n!}{n_{1}!\cdots n_{q}!}\sum_{m=0}^{n}
\sum_{\substack{1\leq y_{1}< y_{2}< \dotsb  < y_{q}\leq N\\
   y_{m}\leq x<y_{m+1}}}
\hspace{-0.5cm}I_{x, y_{1},\dotsc ,y_{q}}^{m,n_{1},\dotsc ,n_{q}}
\end{split}
\end{equation}

\end{cor}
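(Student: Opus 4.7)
The plan is to combine the SUSY integral identities of Corollary~\ref{coralg} with the transfer operator representation from \eqref{eq:transfa}, \eqref{eq:2.18}, \eqref{eq:defTmultiple}, and then use Proposition~\ref{propsusy} to translate each application of $\mf T$ into its ordinary counterpart $\mc T$. Every quantity in the corollary reduces to sums of single-$R$ integrals of the form
\[
\int\ud R\;G(R)\,\bigl[e^{V(R)}\,a\bigr]\,H(R),
\]
where $G(R)$ and $H(R)$ are products of the form $[\mf T^{k_1}\str(\cdot)^{n_1}\mf T^{k_2}\cdots e^{-V}](R)$. Since $e^{-V}$ is rotation-invariant by \eqref{eq:Vpol}, multiplication by $\str R$ preserves rotation invariance, and $\mf T$ does so by Lemma~\ref{le:unitaryinv}, each of $G,H$ is invariant under superunitary rotations. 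Iterating Proposition~\ref{propsusy} then gives $G(R)=g(\lambda)$ and $H(R)=h(\lambda)$, where $g,h$ are obtained by the formal substitutions $\mf T\rightsquigarrow\mc T$ and $\str(\cdot)\rightsquigarrow\Lambda(\cdot)$ throughout.

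The remaining substantive step is an insertion identity: for rotation-invariant $G,H$ with sufficient Gaussian decay at infinity,
\[
\int\ud R\;G(R)\,a\,H(R)\;=\;\frac{1}{2\pi}\int\frac{\ud\lambda_1\,\ud\lambda_2}{\lambda_1-i\lambda_2}\,g(\lambda)\,h(\lambda).
\]
I would prove this by following the polar-coordinate recipe used in the proof of Proposition~\ref{propsusy}. Expanding $(gh)(\lambda)=(gh)(a,b)+\tfrac{\bar\rho\rho}{a-ib}(\partial_a-i\partial_b)(gh)(a,b)$ via \eqref{egvals} and performing the Grassmann integration $\int\ud\bar\rho\,\ud\rho$, the left hand side becomes
\[
-\frac{1}{2\pi}\int\frac{a\,(\partial_a-i\partial_b)(gh)(a,b)}{a-ib}\,\ud a\,\ud b.
\]
The product rule gives $a(\partial_a-i\partial_b)(gh)=(\partial_a-i\partial_b)\bigl(a(gh)\bigr)-gh$, and the Cauchy--Pompeiu formula \eqref{eq:stokes} applied to $a(gh)$, which vanishes at the origin and whose boundary integral at $r=\infty$ disappears by Gaussian decay, annihilates the first piece and yields the claimed identity.

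Assembling the insertion formula with the $\mf T\rightsquigarrow\mc T$ reduction then turns \eqref{eq:coralg.1}, \eqref{eq:coralg.2}, and \eqref{eq:coralg.3}, rewritten via \eqref{eq:transfa} and \eqref{eq:2.18}, into precisely the three identities of the corollary; the factor $e^{V(R)}\,a$ in the middle of every such expression becomes $e^{V(\lambda)}\,\Lambda^{-1}$ integrated against the bosonic measure $\ud\lambda_1\,\ud\lambda_2/(2\pi)$. The main obstacle is verifying Gaussian decay of $\mc T^{k} e^{-V}$ uniformly in $k\le N$, which is needed both to invoke Proposition~\ref{propsusy} and to justify the vanishing boundary term in Cauchy--Pompeiu. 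I would handle this by induction on $k$ from the form of $\mc T$ in \eqref{eq:defTf}: the multiplication by $e^{-V(\lambda)}$ built into each application of $\mc T$ restores a full Gaussian envelope after every application of the diffusive operator $\delta_{W^2}^\star$, so the $\Lambda^{n_i}$ insertions and the factor $a(gh)$ appear only against an overall Gaussian, making the boundary argument rigorous.
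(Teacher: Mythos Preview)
Your proposal is correct and follows essentially the same route as the paper's proof: reduce via Corollary~\ref{coralg} and \eqref{eq:transfa}--\eqref{eq:2.18} to integrals $\int\ud R\,a\,f(\lambda)$ with $f$ rotation-invariant, expand $f(\lambda)$ in the Grassmann variables using \eqref{egvals}, integrate out $\rho,\bar\rho$, and then apply Cauchy--Pompeiu \eqref{eq:stokes} to $af$ (which vanishes at the origin) to obtain the factor $(\lambda_1-i\lambda_2)^{-1}$. Your insertion identity and its derivation via the product rule are exactly the computation the paper carries out in its proof; the only difference is presentational, and your explicit discussion of the Gaussian decay needed for the boundary term is a point the paper leaves implicit.
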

\begin{proof}
From  Corollary~\ref{coralg} and relations \eqref{eq:transfa} the  proof of the first two identities is reduced 
to study
\[
I_{i}:=\int   \ud a\ud b \ud \ol{\rho }\ud \rho\  a\  f_{i}(\lambda ),\quad i=1,2
\]
with
\begin{align*}
f_{1} (\lambda)&:= e^{V (\lambda )} \big[\mc T^{x-1}e^{-V}\big ](\lambda)\ \big[\mc T  ^{N-x}e^{-V}\big ] (\lambda),\\
f_{2} (\lambda)&:= e^{V (\lambda )} \big[\mc T^{x-1}e^{-V}](\lambda)\ \big[\mc{T}^{x'-x} \Lambda \mc T^{N-x'}e^{-V}\big ] (\lambda )
\end{align*}
Then
\begin{align*}
I_{i}&:= \int   \ud a\ud b \ud \ol{\rho }\ud \rho\  a\
f_{i}\big(a+ \tfrac{\ol\rho\rho}{a-ib},b-i \tfrac{\ol\rho\rho}{a-ib}\big)\\
& = \int \ud a\ud b \ud \ol{\rho }\ud \rho \ a\,
\big[ f_{i} (a,b)+ \tfrac{\ol\rho\rho}{a-ib}  (\partial_{a}-i\partial_{b})f_{i}(a,b)\big]\\
&=  -\int   \tfrac{\ud a\ud b }{2\pi }\  a\,
\tfrac{1}{a-ib}  (\partial_{a}-i\partial_{b})f_{i}(a,b)=\int   \tfrac{\ud a\ud b }{2\pi }\ 
\tfrac{1}{a-ib}  f_{i}(a,b)
\end{align*}
where in the last line we applied again Stokes theorem \eqref{eq:stokes} and we used $af_{i} (a,b)=0$ for $a=b=0.$
The proof for the third identity is done in the same way.
\end{proof}

\section{Analysis of the transfer operator}\label{ana}

So far we treated $\mf T$ and $\mc T$ as formal operations. To apply spectral-theoretic
methods,
%we observe that $(\mc Tf)(0) = f(0)$, thus if $f$ vanishes at
%the origin, so does $\mc Tf$.
recall that for $f$ vanishing at the origin, $\mc T f = Tf$,
and hence 
\begin{equation}\label{eq:tr1}
\mc T^n f = T^n f = \Lambda (e^{-V} \delta_{W^2}^\star)^n \Lambda^{-1}f
=  \Lambda e^{-\half V} (e^{-\half V} \delta_{W^2}^\star e^{-\half V})^n e^{\half V} \Lambda^{-1}f~.
\end{equation}
It is safe to use expressions such as $e^{-\frac{V}{2}}$ and $V$, since $e^{-V}$ does not
vanish except if $E=0$ at $\lambda_2=1 $ (cf.\eqref{eq:Vdef}).
We will show in Sect.~\ref{sect:E=0} below this problem  is easy to deal with.
For $f$ sufficiently regular (but not necessarily vanishing at zero) the computation of 
 $\mc T^n f$  will be reduced to the following two  ingredients.

\textit{(i)} We  show in Section~\ref{s:opnorm} that the operator 
$e^{-\half V} \delta_{W^2}^\star e^{-\half V}$ is bounded in $L_p(\mathbb{R}^2)$ 
for any $p \in [1, \infty]$, and spectral theory yields
a good bound on its norm for $p\in (1,\infty).$

\textit{(ii)} In Section~\ref{seceigenfn}, we construct a solution $u$ of the eigenvalue equation
$\mc T u = u$ satifying in addition $u (0)=1.$ Unlike Section~\ref{s:opnorm}, the argument relies on explicit elementary computations
rather than on methods of operator theory, and the term `eigenfunction' is used
without specifying the construction of the operator.\footnote{A proper operator-theoretic
meaning can be given by constructing the operator
in an appropriate weighted Hilbert space; this, however, is not required for our
argument.}

We recover then $\mc T^n f$ from \textit{(i)} and \textit{(ii)} via   the decomposition
 $f=f (0)u+[f-f (0)u]$ as follows: 
\[
\mc T^n f = f(0) u +  \Lambda e^{-\half V} (e^{-\half V} \delta_{W^2}^\star e^{-\half V})^n e^{\half V} \Lambda^{-1} (f - f(0) u)~.
\]
Note that $u (0)=1$ ensures $(f - f(0) u) (0)=0.$
The results of this section mostly pertain to the case $|E| \leq \sqrt{\frac{32}9}$.
The extension to all $|E|<2$ is discussed in Section~\ref{S:deform}.

\subsection{Operator norm bound}\label{s:opnorm}

Denote by $\Vert \cdot \Vert_p $ the operator norm of an operator from   $L_p(\mathbb{R}^2) = \left\{ f: \mathbb R^2 \to \mathbb C\, \mid \, \|f\|_p < \infty\right\}$ to itself.

We recall for further use that, for an integral operator with kernel $K$,
\begin{equation}\label{eq:opnorms}
\|K\|_\infty = \sup_\lambda \int d\lambda_1' d\lambda_2' |K(\lambda, \lambda')|~, \quad
\|K\|_1 = \sup_{\lambda'} \int d\lambda_1 d\lambda_2 |K(\lambda, \lambda')|
\end{equation}
and that for all $1 < p < \infty$
\begin{equation}\label{eq:interp1}
\|K\|_p \leq \|K\|_1^{\frac{1}{p}} \|K\|_\infty^{1- \frac{1}{p}}~.
\end{equation}
The combination of the relations (\ref{eq:opnorms}) and (\ref{eq:interp1}) is known as Schur's bound.
It immediately follows from \eqref{eq:defdeltaW} and \eqref{eq:interp1}  that $\Vert \delta_{W^2}^\star\Vert_p \leq 1$ for any $p\in [1,\infty]$. It is also easy to check that, for $ \vert E\vert\leq \sqrt{\frac{32}{9}}$\footnote{At $E=\sqrt{\frac {32}9} $, $\Re V $ develops two new local minima. The value at these minima is positive until $\vert E\vert = 1.893\dots> \sqrt{\frac {32}9} = 1.885\dots$ (no analytical expression available), and the results of \cite{shcherbina2016transfer} as well as
the argument of this section actually hold until this threshold.}, $\Re V(\lambda) $ has non-degenerate global minima at $\lambda = 0$, and $ \lambda = (0,2\sqrt{1-\frac{E^2}4}) $, with value $0$. In particular, $\Re V\geq 0 $, so that 
\begin{equation}\label{eq:normleq1}
\forall |E| \leq \sqrt\frac{32}9 \,\,\forall p\in [1,\infty]   \quad \Vert e^{-V}\Vert_p\leq 1~,
\end{equation} 
where $  \Vert e^{-V}\Vert_p$ means the operator norm. In particular,
\begin{equation}\label{eq:normleq1'}
\forall |E| \leq \sqrt\frac{32}9 \,\,\forall p\in [1,\infty]   \quad \Vert \delta_{W^2}^\star e^{-V}\Vert_p~,
\Vert e^{-V} \delta_{W^2}^\star  \Vert_p~, \Vert e^{-V/2} \delta_{W^2}^\star e^{-V/2}\Vert_p \leq 1~.
\end{equation} 
The following standard semiclassical argument improves these bounds for $1<p<\infty $.
\begin{prop}\label{specbound} For any $p \in (1,\infty) $ there exists 
$c > 0$ such that
\[ \forall |E| \leq  \sqrt\frac{32}9 \,\,\,\,\,
 \Vert e^{-\half V}\delta_{W^2}^\star e^{-\half V}\Vert_p\leq 1-\frac cW~. \]
\end{prop}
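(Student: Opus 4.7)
The argument has three stages: (i) replace $V$ by its real part, (ii) reduce to the $L^2$ bound by Riesz--Thorin interpolation, and (iii) prove the $L^2$ bound by a semiclassical (harmonic-oscillator type) estimate.

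\textbf{Step 1: Reduction to the real part of $V$.} Since $|e^{-V/2}| = e^{-\Re V/2}$, the integral kernel of $e^{-V/2}\delta_{W^2}^\star e^{-V/2}$ is dominated in absolute value by that of the positive self-adjoint operator $A := e^{-\Re V/2}\delta_{W^2}^\star e^{-\Re V/2}$. Hence $\|e^{-V/2}\delta_{W^2}^\star e^{-V/2}\|_p \leq \|A\|_p$ for all $p \in [1,\infty]$, and it suffices to bound $A$.

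\textbf{Step 2: Interpolation.} By \eqref{eq:normleq1'}, $\|A\|_1, \|A\|_\infty \leq 1$. If the bound $\|A\|_2 \leq 1 - c_0/W$ is established with a constant $c_0$ uniform in $E$ on the relevant interval, then Riesz--Thorin gives, for any $p\in(1,\infty)$,
\[
\|A\|_p \leq \|A\|_2^{\theta_p}\,\|A\|_1^{\alpha_p}\,\|A\|_\infty^{\beta_p} \leq \Bigl(1-\tfrac{c_0}{W}\Bigr)^{\theta_p} \leq 1-\frac{c_0\,\theta_p}{W},
\]
where $\theta_p = \min(2/p,\,2-2/p) > 0$; this suffices for the stated conclusion with $c = c_0\,\theta_p$.

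\textbf{Step 3: Variational reformulation of the $L^2$ bound.} Substituting $h = e^{-\Re V/2} f$ in the Rayleigh quotient,
\[
\|A\|_2 \;=\; \sup_{h\neq 0}\ \frac{\langle h,\delta_{W^2}^\star h\rangle}{\displaystyle\int e^{\Re V}|h|^2\,d\lambda}~,
\]
so it is equivalent to prove that for every $h \in L^2(\mathbb R^2)$,
\[
\mathcal Q[h] \;:=\; \langle h,(I-\delta_{W^2}^\star)h\rangle \;+\; \int (e^{\Re V}-1)|h|^2\,d\lambda \;\geq\; \frac{c_0}{W}\int e^{\Re V}|h|^2\,d\lambda~.
\]

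\textbf{Step 4: Semiclassical lower bound.} By Plancherel and the elementary inequality $1-e^{-t} \geq \tfrac12 \min(t,1)$,
\[
\langle h,(I-\delta_{W^2}^\star)h\rangle \;=\; \int \bigl(1-e^{-|k|^2/(2W^2)}\bigr)|\hat h(k)|^2\,dk \;\geq\; \tfrac14 \int \min\!\Bigl(\tfrac{|k|^2}{W^2},\,1\Bigr)|\hat h(k)|^2\,dk~.
\]
For $|E| \leq \sqrt{32/9}$, $\Re V$ vanishes precisely at two non-degenerate minima $\lambda^{(1)}=0$ and $\lambda^{(2)}=(0,2\sqrt{1-E^2/4})$; hence there exist $\kappa,\delta>0$, uniform on compact sub-intervals, such that $\Re V(\lambda) \geq \kappa\,|\lambda-\lambda^{(i)}|^2$ on a fixed neighbourhood of $\lambda^{(i)}$ and $\Re V(\lambda) \geq \delta$ outside these neighbourhoods. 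I would then fix a smooth partition $\chi_0^2+\chi_1^2+\chi_2^2 = 1$ with $\chi_i$ ($i=1,2$) supported near $\lambda^{(i)}$ and $\chi_0$ supported where $\Re V\geq\delta/2$, and apply the IMS localization formula
\[
\langle h,(I-\delta_{W^2}^\star)h\rangle \;\geq\; \sum_{i=0}^{2}\langle \chi_i h,(I-\delta_{W^2}^\star)\chi_i h\rangle \;-\; \frac{C}{W^2}\|h\|_2^2~,
\]
the localization error being controlled by $\sum_i \|\nabla\chi_i\|_\infty^2/W^2$. The contribution of $\chi_0 h$ is handled by the potential term alone, since $e^{\Re V}-1 \geq e^{\delta/2}-1$ on $\mathrm{supp}\,\chi_0$. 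For each $\chi_i h$ with $i=1,2$, $e^{\Re V}$ is bounded on the support, so $\int e^{\Re V}|\chi_i h|^2\leq C\|\chi_i h\|_2^2$, and a combination of Heisenberg's uncertainty inequality and AM--GM gives
\[
\beta\|\nabla g\|_2^2 + \alpha\|(\lambda-\lambda^{(i)})g\|_2^2 \;\geq\; 2\sqrt{\alpha\beta}\,\|\nabla g\|_2\|(\lambda-\lambda^{(i)})g\|_2 \;\geq\; \sqrt{\alpha\beta}\,\|g\|_2^2~,
\]
so taking $\alpha\asymp \kappa$ and $\beta\asymp 1/W^2$ produces a lower bound $\asymp 1/W$ for the local piece. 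Summing the three pieces yields $\mathcal Q[h] \geq (c_0/W)\int e^{\Re V}|h|^2\,d\lambda$ for $W$ large enough.

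\textbf{Expected main obstacle.} The delicate point is Step 4: one must simultaneously handle the low-frequency regime (where $I-\delta_{W^2}^\star$ behaves like $-\Delta/W^2$ and the harmonic-oscillator uncertainty bound applies) and the high-frequency regime (where the Fourier multiplier saturates), while keeping the IMS localization errors smaller than the $1/W$ gap one is trying to establish. The restriction $|E| \leq \sqrt{32/9}$ enters precisely here: it guarantees uniform non-degeneracy and separation of the two minima of $\Re V$, so that $\kappa$ and $\delta$ above can be chosen independently of $E$ on compact sub-intervals of $(-\sqrt{32/9},\sqrt{32/9})$.
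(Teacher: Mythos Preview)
Your Steps~1--2 (passing to $\Re V$ and Riesz--Thorin interpolation from $p=2$) and the IMS localisation in Step~4 coincide with the paper's proof; the treatment of the piece supported away from the minima (your $\chi_0$, the paper's $\chi_3$) is also the same.

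The gap is in the endgame of Step~4, precisely where you yourself flag the ``delicate point''. After IMS you want, for $g=\chi_i h$,
\[
\langle g,(I-\delta_{W^2}^\star)g\rangle+\kappa\|(\lambda-\lambda^{(i)})g\|_2^2\;\geq\;\frac{c}{W}\,\|g\|_2^2,
\]
and you invoke $\beta\|\nabla g\|_2^2+\alpha\|(\lambda-\lambda^{(i)})g\|_2^2\geq\sqrt{\alpha\beta}\,\|g\|_2^2$ with $\beta\asymp W^{-2}$. But the Plancherel bound you derived only gives
\[
\langle g,(I-\delta_{W^2}^\star)g\rangle\;\geq\;\tfrac14\int\min\bigl(|k|^2/W^2,\,1\bigr)\,|\hat g(k)|^2\,dk,
\]
which is \emph{not} bounded below by $cW^{-2}\|\nabla g\|_2^2$: the multiplier saturates to~$1$ for $|k|\gtrsim W$. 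Splitting $g$ into low and high Fourier modes does not help directly, because the potential term $\kappa\|(\lambda-\lambda^{(i)})g\|_2^2$ lives in physical space and the projections $P_{|k|\le W}$, $P_{|k|>W}$ do not commute with multiplication by $|\lambda-\lambda^{(i)}|^2$; the cross terms are not obviously controlled. So the AM--GM/Heisenberg step, as written, is unjustified.

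The paper closes this gap by staying at the operator level rather than passing to the quadratic form. On $\operatorname{supp}\chi_i$ one has the pointwise bound $e^{-\Re V/2}\le e^{-c'|\lambda-\lambda^{(i)}|^2}$ (non-degenerate minimum), hence the positive-kernel operator $\mathbbm 1_{\operatorname{supp}\chi_i}e^{-\Re V/2}\delta_{W^2}^\star e^{-\Re V/2}\mathbbm 1_{\operatorname{supp}\chi_i}$ is dominated in norm by the harmonic Kac operator $e^{-c'\lambda^2}\delta_{W^2}^\star e^{-c'\lambda^2}$. The latter is exactly diagonalisable (Mehler kernel): its $L^2$ norm equals its spectral radius, which is $\le 1-c''/W$. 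This sidesteps the low/high-frequency dichotomy entirely and is the missing ingredient in your argument.
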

\begin{rem} The estimate fails for $p=1,\infty$: indeed, 
\[ \begin{split}
&\Vert e^{-\half V}\delta_{W^2}^\star e^{-\half V}\Vert_1 = \Vert e^{-\half V}\delta_{W^2}^\star e^{-\half V}\Vert_\infty \\
&\quad = \sup_\lambda \int d\lambda_1' d\lambda_2'  \,
e^{-\half \Re V(\lambda)} \frac{W^2}{2\pi} e^{-\frac12 W^2 (\lambda - \lambda')^2}  e^{-\half \Re V(\lambda')} = 1 - \mc O(W^{-2})~. \end{split}\]
\end{rem}

\begin{rem}\label{rem:normbounded} The proposition implies  that $\Vert (e^{-V}\delta_{W^2}^\star)^k\Vert_p ,\,\Vert (\delta_{W^2}^\star e^{-V})^k\Vert_p \leq (1-c/W)^{k-1} $ for $p\in(1,\infty)$.
\end{rem}
\begin{proof}
We prove the estimate for $p=2$, in which case this is a standard semiclassical argument (see \cite{Helfferbook}), which we repeat for the convenience of the reader. The general case  follows from the case $p=2$ and (\ref{eq:normleq1'}) by Riesz--Thorin interpolation.

It suffices to show that $\Vert e^{-\half \Re V}\delta_{W^2}^\star e^{-\half \Re V}\Vert_2\leq 1-\frac cW$.  
Let $\chi_1,\chi_2 $ be smooth bump functions in a sufficiently small (but $W$-independent) neighbourhood of the
minima $0,(0,2\sqrt{1-\frac{E^2}4}) $ of $\Re V $, and let $\chi_3 $ be so that $\chi_1^2+\chi_2^2 + \chi_3^2=1 $. We have
\[\begin{split}
e^{-\half \Re V}\delta_{W^2}^\star e^{-\half \Re V} = \sum_{i=1,2,3} \chi_i e^{-\half \Re V}\delta_{W^2}^\star e^{-\half \Re V} \chi_i +  e^{-\half \Re V} A e^{-\half \Re V} ,
\end{split}\]
where $A$ has the kernel $\half\delta_{W^2}^\star(\lambda,\lambda')\sum_i [\chi_i(\lambda) - \chi_i(\lambda')]^2 $
and we used the relation $ \chi_i(\lambda)^{2} + \chi_i(\lambda')^{2}= [\chi_i(\lambda) - \chi_i(\lambda')]^2+2 \chi_i(\lambda)  \chi_i(\lambda'). $ Therefore, using $\Vert e^{-\half \Re V}\Vert_2\leq 1 $,
\[\begin{split}
\Vert  e^{-\half \Re V}\delta_{W^2}^\star e^{-\half \Re V}\Vert_2 &\leq\left\Vert   \sum_{i=1,2,3}\chi_ie^{-\half \Re V}\delta_{W^2}^\star e^{-\half \Re V}\chi_i\right\Vert_2 + \Vert   A  \Vert_2
\end{split}\]
From $\sum_i [\chi_i(\lambda) - \chi_i(\lambda')]^2 = (\lambda-\lambda')^2 \tilde\chi(\lambda,\lambda') $ with bounded $\tilde \chi $, it is easy to show with the Schur bound  \eqref{eq:opnorms}  \eqref{eq:interp1}  that $\Vert A\Vert_2 = \mc O(W^{-2}) $. Further,
\[\begin{split}
\left\Vert   \sum_{i=1,2,3}\chi_ie^{-\half \Re V}\delta_{W^2}^\star e^{-\half \Re V}\chi_i\right\Vert_2 
&= \sup_{\Vert \phi\Vert_2=1} \sum_{i=1,2,3} (\chi_i\phi,\mathbbm{1}_{\supp\chi_i}e^{-\half \Re V}\delta_{W^2}^\star e^{-\half \Re V}\mathbbm{1}_{\supp\chi_i} \chi_i\phi)\\
&\hspace{-1cm}\leq  \max_i \Vert \mathbbm{1}_{\supp\chi_i}e^{-\half \Re V}\delta_{W^2}^\star e^{-\half \Re V}\mathbbm{1}_{\supp\chi_i}\Vert_2\sup_{\Vert \phi\Vert_2=1} \sum_{i=1,2,3} \Vert \chi_i\phi\Vert_2^2 \\
&\hspace{-1cm}= \max_i \Vert \mathbbm{1}_{\supp\chi_i}e^{-\half \Re V}\delta_{W^2}^\star e^{-\half \Re V}\mathbbm{1}_{\supp\chi_i}\Vert_2.
\end{split}\]
For $i=3 $, since $  \Re V\vert_{\supp\chi_3}>0 $, we have $\Vert \mathbbm{1}_{\supp\chi_3}e^{-\half \Re V}\delta_{W^2}^\star e^{-\half \Re V}\mathbbm{1}_{\supp\chi_3}\Vert_2 \leq c <1 $, uniformly in $W$. For $ i=1,2$, since the minima of $\Re V $ are non-degenerate,
$$\Vert \mathbbm{1}_{\supp\chi_i}e^{-\half \Re V}\delta_{W^2}^\star e^{-\half \Re V}\mathbbm{1}_{\supp\chi_i} \Vert_2\leq
\Vert e^{-c'\lambda^2}\delta_{W^2}^\star e^{-c'\lambda^2}\Vert_2 $$
for some $c'>0 $. For self adjoint operators, the $L^2$ operator norm is equal to the spectral radius, and the spectrum of the harmonic Kac operator $e^{-c'\lambda^2}\delta_{W^2}^\star e^{-c'\lambda^2} $ can be computed explicitly, giving $\Vert e^{-c'\lambda^2}\delta_{W^2}^\star e^{-c'\lambda^2}\Vert_2  \leq 1-\frac{c''}W$. The claim follows for $p=2$. \end{proof}
%%%%%%%%%%%%%%%%%%%%%%%%%%%%%%%%%%%%%%%%%%%%%%%%%%
The following corollary will be a key ingredient in the proof of our theorems.
%%%%%%%%%%%%%%%%%%%%%%%%%%%%%%%%%%%%%%%%%%%%%%%%%
\begin{cor}\label{cor:specbound}
Let $f\in L_{p}$ with $1<p<\infty$ be such that $\Lambda^{-1} f\in L_{p}.$ Then  we have for all $n\geq 1,$ $m\geq 0$
\begin{equation}\label{cor:Tnormest}
\begin{split}
\|\Lambda^{m}T^{n}f\|_{p}&\leq  C^{m+1}\sqrt{(m+1)!}\ 
\Big(1-\frac cW\Big)^{n-1}\Vert \Lambda^{-1}f\Vert_p,\\
\|\Lambda^{-1}T^{n}f\|_{p}&\leq   
\Big(1-\frac cW\Big)^{n-1}\Vert \Lambda^{-1}f\Vert_p.\\
\end{split}
\end{equation}
\end{cor}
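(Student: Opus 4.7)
The plan is to exploit the factorization $T^{n}f = \Lambda(e^{-V}\delta_{W^{2}}^{\star})^{n}\Lambda^{-1}f$ recorded in \eqref{eq:tr1}, which reduces both estimates to $L^{p}$ bounds on powers of $e^{-V}\delta_{W^{2}}^{\star}$, multiplied on the left (for the first inequality) by a polynomial factor in $\Lambda$. The second inequality is then immediate: $\Lambda^{-1}T^{n}f = (e^{-V}\delta_{W^{2}}^{\star})^{n}\Lambda^{-1}f$, so Remark~\ref{rem:normbounded} gives
\[
\|\Lambda^{-1}T^{n}f\|_{p}\ \leq\ \|(e^{-V}\delta_{W^{2}}^{\star})^{n}\|_{p}\,\|\Lambda^{-1}f\|_{p}\ \leq\ (1-c/W)^{n-1}\,\|\Lambda^{-1}f\|_{p}.
\]

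For the first inequality, $\Lambda^{m}T^{n}f = \Lambda^{m+1}(e^{-V}\delta_{W^{2}}^{\star})^{n}\Lambda^{-1}f$, and I would pull $\Lambda^{m+1}e^{-V/2}$ out to the left so that what remains is a power of the symmetric Kac-type operator $e^{-V/2}\delta_{W^{2}}^{\star}e^{-V/2}$ controlled by Proposition~\ref{specbound}. Writing $M:=e^{-V/2}$ and $D:=\delta_{W^{2}}^{\star}$, a direct expansion verifies the elementary identity $(M^{2}D)^{n}=M(MDM)^{n-1}MD$, i.e.\
\[
(e^{-V}\delta_{W^{2}}^{\star})^{n} \ =\ e^{-V/2}\,(e^{-V/2}\delta_{W^{2}}^{\star}e^{-V/2})^{n-1}\,e^{-V/2}\,\delta_{W^{2}}^{\star}.
\]
Since $|e^{-V(\lambda)/2}|=e^{-\Re V(\lambda)/2}$ and the kernel of $\delta_{W^{2}}^{\star}$ is positive, kernel domination reduces $\|e^{-V/2}\delta_{W^{2}}^{\star}e^{-V/2}\|_{p}$ to $\|e^{-\Re V/2}\delta_{W^{2}}^{\star}e^{-\Re V/2}\|_{p}\leq 1-c/W$ by Proposition~\ref{specbound}. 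Combined with $\|\delta_{W^{2}}^{\star}\|_{p}\leq 1$ and the trivial bound $\|e^{-V/2}\|_{\infty}=O(1)$, this gives
\[
\|\Lambda^{m}T^{n}f\|_{p}\ \leq\ C\,\|\Lambda^{m+1}e^{-V/2}\|_{\infty}\,(1-c/W)^{n-1}\,\|\Lambda^{-1}f\|_{p}.
\]

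The main computation is therefore the pointwise bound $\|\Lambda^{m+1}e^{-V/2}\|_{\infty}\leq C^{m+1}\sqrt{(m+1)!}$. From the explicit formula \eqref{eq:Vpol}, completing the square yields
\[
|e^{-V(\lambda)}|\ =\ \left|\tfrac{\bar{\mc E}-i\lambda_{2}}{\bar{\mc E}-\lambda_{1}}\right|\,\exp\!\Big(-\tfrac{1}{2}|\lambda-\lambda_{0}|^{2}+\tfrac{1}{2}\Big),\qquad \lambda_{0}=(-E/2,\sqrt{1-E^{2}/4}),
\]
and for $|E|<2$ the rational prefactor is bounded by $C(E)(1+|\lambda_{2}|)$, since $|\bar{\mc E}-\lambda_{1}|\geq\sqrt{1-E^{2}/4}>0$ for real $\lambda_{1}$. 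Thus
\[
|\Lambda|^{m+1}|e^{-V/2}(\lambda)|\ \leq\ C(E)\,(1+|\lambda|)^{m+3/2}\,\exp\!\big(-\tfrac{1}{4}|\lambda-\lambda_{0}|^{2}\big),
\]
and elementary calculus locates the maximum near $|\lambda-\lambda_{0}|\sim\sqrt{2(m+1)}$ with value $\sim(2(m+1)/e)^{(m+1)/2}$; Stirling converts this into $C(E)^{m+1}\sqrt{(m+1)!}$.

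No step of the argument is genuinely subtle: the spectral input is already provided by Proposition~\ref{specbound}, the algebraic identity is verified by direct expansion, and the pointwise estimate is a routine Gaussian-times-polynomial maximization. The only point worth flagging is that the \emph{symmetric} factorization above is essential; pulling the polynomial out asymmetrically as $(\Lambda^{m+1}e^{-V})\,\delta_{W^{2}}^{\star}(e^{-V}\delta_{W^{2}}^{\star})^{n-1}$ would instead yield the exponent $(n-2)$ from Remark~\ref{rem:normbounded}, costing a stray factor that only absorbs into $C$ at the expense of the neat form of the statement.
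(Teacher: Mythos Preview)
Your proof is correct and follows essentially the same route as the paper: both use the symmetric factorization $(e^{-V}\delta_{W^{2}}^{\star})^{n}=e^{-V/2}(e^{-V/2}\delta_{W^{2}}^{\star}e^{-V/2})^{n-1}e^{-V/2}\delta_{W^{2}}^{\star}$, pull out $\Lambda^{m+1}e^{-V/2}$ in $L^{\infty}$, apply Proposition~\ref{specbound} to the $(n-1)$-fold product, and bound the trailing $e^{-V/2}\delta_{W^{2}}^{\star}$ by $1$. The paper disposes of the pointwise bound $\|\Lambda^{m+1}e^{-V/2}\|_{\infty}\leq C^{m+1}\sqrt{(m+1)!}$ in one line (``quadratic growth of $\Re V$ at infinity''), whereas you spell out the Gaussian--polynomial maximization explicitly; otherwise the arguments coincide.
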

\begin{proof}
For the first bound we use 
\begin{equation}
\begin{split}
&\Vert \Lambda^{m}T^n f \Vert_p =
\Vert  \Lambda^{m+1} e^{-\half V} (e^{-\half V} \delta_{W^2}^\star e^{-\half V})^{n-1}
e^{-\half V} \delta_{W^2}^\star \Lambda^{-1} f \Vert_p\\
&\quad  \leq \Vert \Lambda^{m+1} e^{-\frac{1}{2}V}\Vert_\infty  \Vert  (e^{-\half V} \delta_{W^2}^\star e^{-\half V})^{n-1}
e^{-\half V} \delta_{W^2}^\star \Lambda^{-1} f \Vert_p\\
&\quad  \leq   \Vert \Lambda^{m+1} e^{-\frac{1}{2}V}\Vert_\infty
\Big(1-\tfrac cW\Big)^{n-1}\Vert \Lambda^{-1}f\Vert_p,
\end{split}
\end{equation}
where in the first line we applied equation \eqref{eq:tr1} and in the second line equation \eqref{eq:normleq1'} and
Proposition~\ref{specbound}.
For the second bound we use
\begin{equation}
\begin{split}
&\Vert \Lambda^{-1}T^n f \Vert_p =
\Vert e^{-\half V} (e^{-\half V} \delta_{W^2}^\star e^{-\half V})^{n-1}
e^{-\half V} \delta_{W^2}^\star \Lambda^{-1} f \Vert_p\\
&\quad  \leq \Vert  e^{-\frac{1}{2}V}\Vert_\infty  \Vert  (e^{-\half V} \delta_{W^2}^\star e^{-\half V})^{n-1}
e^{-\half V} \delta_{W^2}^\star \Lambda^{-1} f \Vert_p \leq 
\Big(1-\tfrac cW\Big)^{n-1}\Vert \Lambda^{-1}f\Vert_p.
\end{split}
\end{equation}
The bound $\| \Lambda^{m+1} e^{-V}\|_\infty \leq C^{m+1} \sqrt{(m+1)!}$ follows from the quadratic growth of $\Re V$ at infinity.
\end{proof}
We conclude this subsection with a few additional properties of the $T$ operator that will prove
important later.
%%%%%%%%%%%%%%%%%%%%%%%%%%%%%%%%%%%%%%%%%%%%%%%%%%%

\begin{lemma}\label{le:additionalprop} The following holds.

\begin{itemize}
\item [(i)] Let $p\in (1,\infty), n\in \mathbb{N}$  and $1\leq q\leq p$ such that $f$ and $\Lambda^{n}f\in L_{p}$ and $\Lambda^{-1}f\in L_{q}.$
Then $Tf\in C^{\infty} (\mathbb{R}^{2})$ and there exists a  constant $C=C_{p,q}>0$ such that
\begin{equation}\label{eq:additionalprop}
\Vert e^{V}\Lambda^{n}T f \Vert_p\leq C^{n+1} \left[ \Vert \Lambda^{n}f \Vert_p+  \sqrt{(n+1)!}\ W^{-1-n} W^{\frac{2(p-q)}{pq}} 
\Vert \Lambda^{-1}f \Vert_q \right].
\end{equation}

\item [(ii)] Let $n\in \mathbb{N}$ and  $f\in L_{1}\cap L_{\infty}$ be a $C^{1}$ function such that $\Lambda^{n}f\in L_{1}\cap L_{\infty}.$ Then $\mc{T}f$ is smooth,  and for all
$p\in (1,\infty)$ $e^{V}\mc{T}f\in L_{p},$ and satisfies the bound
\[
\Vert e^{V}\Lambda^{n}\mc{T}f \Vert_p \leq C^{n+1} \left[ \Vert \Lambda^{n}f \Vert_p+  \sqrt{(n+1)!}\left\{ W^{1-\frac{2}{p}-n} 
\Vert \Lambda^{-1}f \Vert_1  +   W^{-\frac{2}{p}-n} |f(0)| \right\} \right] \]
\end{itemize}
\end{lemma}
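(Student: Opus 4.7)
The plan is to prove (i) by unpacking $e^V \Lambda^n T f = \Lambda(\lambda)^{n+1}(\delta_{W^2}^\star \Lambda^{-1} f)(\lambda)$, which follows directly from $T = e^{-V} \Lambda \delta_{W^2}^\star \Lambda^{-1}$, and then expanding $\Lambda(\lambda)^{n+1}$ binomially inside the Gaussian integral. Writing $\Lambda(\lambda) = (\Lambda(\lambda)-\Lambda(\lambda')) + \Lambda(\lambda')$ (which works because $\Lambda = \lambda_1 - i\lambda_2$ is linear, so $\Lambda(\lambda)-\Lambda(\lambda') = \Lambda(\lambda - \lambda')$ depends only on the difference), I obtain
\[
\Lambda^{n+1}\delta_{W^2}^\star \Lambda^{-1}f = \sum_{k=0}^{n+1}\binom{n+1}{k}\, K_{n+1-k} * (\Lambda^{k-1} f),\qquad K_j(z) := \frac{W^2}{2\pi} e^{-\frac{W^2}{2}|z|^2}(-\Lambda(z))^j,
\]
where $K_j$ is a genuine translation-invariant convolution kernel.

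The two terms on the right-hand side of the stated bound correspond to the extremes $k = n+1$ and $k = 0$. For $k = n+1$, the kernel reduces to $\delta_{W^2}^\star$ and the function to $\Lambda^n f$, so $\|\delta_{W^2}^\star \Lambda^n f\|_p \leq \|\Lambda^n f\|_p$ by the $L_p$-boundedness recorded in \eqref{eq:opnorms}--\eqref{eq:interp1}. For $k = 0$, I would apply Young's inequality $\|K_{n+1} * g\|_p \leq \|K_{n+1}\|_r \|g\|_q$ with $\frac{1}{r} = 1 + \frac{1}{p} - \frac{1}{q}$ to $g = \Lambda^{-1} f \in L_q$. Computing the Gaussian moment in polar coordinates (using $|\Lambda(z)| = |z|$) and rescaling $z \mapsto u/W$ gives $\|K_{n+1}\|_r \leq \pi^{1/r} (W^2/(2\pi)) (rW^2/2)^{-(n+1)/2 - 1/r} \Gamma((n+1)r/2+1)^{1/r}$, whose $W$-scaling is exactly $W^{2-2/r - (n+1)} = W^{-1-n + 2(p-q)/(pq)}$ and whose Gamma-factor is bounded by $C_{p,q}^{n+1} \sqrt{(n+1)!}$ by Stirling. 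For intermediate $k \in \{1, \ldots, n\}$, I would use the crude Young estimate with $L_1$-kernel norm, $\|K_{n+1-k}\|_1 \leq C^{n+1-k}\sqrt{(n+1-k)!}\, W^{-(n+1-k)}$, paired with the pointwise bound $|\Lambda|^{k-1} \leq 1 + |\Lambda|^n$ to control $\|\Lambda^{k-1} f\|_p$, then absorb the binomial coefficients into $C^{n+1}$ and, on $\{|\Lambda|\leq 1\}$, convert the residual $\|f\|_p$ contribution into a $W$-gaining $\|\Lambda^{-1}f\|_q$ contribution via $|f| \leq |\Lambda^{-1}f|$ together with Young with the same exponent $r$ as in the $k=0$ case.

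For Part (ii), I would use the decomposition $\mc T f = e^{-V} e^{-\frac{W^2}{2}|\Lambda|^2} f(0) + Tf$ from Proposition~\ref{propsusy}. The first piece gives the $|f(0)|$ term: a direct Gaussian-moment computation yields $\|\Lambda^n e^{-\frac{W^2}{2}|\Lambda|^2}\|_p = \pi^{1/p} (pW^2/2)^{-n/2 - 1/p}\Gamma(np/2 + 1)^{1/p} \leq C^{n+1}\sqrt{(n+1)!}\, W^{-n - 2/p}$ after Stirling. For the $Tf$ piece I would apply Part (i) with $q = 1$. The hypothesis $f \in L_1 \cap L_\infty$ ensures $\Lambda^{-1} f \in L_1$ via
\[
\|\Lambda^{-1} f\|_1 \leq \int_{|\Lambda|\leq 1} |\Lambda|^{-1} |f| + \int_{|\Lambda|>1} |\Lambda|^{-1} |f| \leq 2\pi \|f\|_\infty + \|f\|_1,
\]
using the two-dimensional local $L_1$-integrability $\int_{|\Lambda|\leq 1}|\Lambda|^{-1}d\lambda = 2\pi$ and $|\Lambda|^{-1}\leq 1$ far from the origin. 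Substituting $q = 1$ in Part~(i) the exponent becomes $-1-n + 2(p-1)/p = 1 - n - 2/p$, which is the stated $W$-power.

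The hard part is the intermediate range $k \in \{1,\ldots,n\}$ in Part~(i): each such term must be forced, by pointwise manipulations and the smoothing provided by $\delta_{W^2}^\star$, into one of the two allowed norms on the right-hand side of \eqref{eq:additionalprop}, without generating an independent $\|f\|_p$ term; this is the step where the factorial factor $\sqrt{(n+1)!}$ and the precise $W$-exponent $2(p-q)/(pq)$ must be kept track of simultaneously across all contributions.
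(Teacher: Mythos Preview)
Your identity $e^V\Lambda^nTf=\Lambda^{n+1}\delta_{W^2}^\star\Lambda^{-1}f$ and the treatment of the two extreme terms $k=0$ and $k=n+1$ are correct and match the paper. The handling of part~(ii) via the decomposition $\mc Tf=e^{-V}e^{-\frac{W^2}{2}|\Lambda|^2}f(0)+Tf$ together with part~(i) at $q=1$ is also the paper's argument.

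The gap is in the intermediate range $1\le k\le n$. Your proposed fix on $\{|\Lambda'|\le 1\}$, namely $|\Lambda'^{\,k-1}f|\le|\Lambda'^{-1}f|$ followed by Young with the exponent $r$ from the $k=0$ case, yields
\[
\binom{n+1}{k}\,\|K_{n+1-k}\|_r\,\|\Lambda^{-1}f\|_q\;\sim\;W^{-(n+1-k)+\frac{2(p-q)}{pq}}\,\|\Lambda^{-1}f\|_q,
\]
which overshoots the target $W^{-(n+1)+\frac{2(p-q)}{pq}}\|\Lambda^{-1}f\|_q$ by a factor $W^{k}$ and cannot be absorbed into the $\|\Lambda^nf\|_p$ term either (take $f$ a fixed bump near the origin: then $\|\Lambda^nf\|_p$ and $\|\Lambda^{-1}f\|_q$ are $\mc O(1)$, while for $q=1,\,p>2$ already the $k=1$ contribution carries a positive power of $W$). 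So the binomial route, as you have set it up, does not close.

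The paper sidesteps the intermediate terms altogether. Instead of expanding $\Lambda^{n+1}$ binomially, it uses the pointwise convexity inequality
\[
|\Lambda|^{n+1}\le 2^{n}\bigl(|\Lambda-\Lambda'|^{n+1}+|\Lambda'|^{n+1}\bigr)
\]
inside the integral, which produces exactly two terms: the $|\Lambda'|^{n+1}$ piece gives $\delta_{W^2}^\star|\Lambda^nf|$ and hence $\|\Lambda^nf\|_p$, while in the $|\Lambda-\Lambda'|^{n+1}$ piece one sacrifices half of the Gaussian to bound $|\Lambda-\Lambda'|^{n+1}e^{-\frac{W^2}{2}|\lambda-\lambda'|^2}\le C^{n+1}\sqrt{(n+1)!}\,W^{-n-1}e^{-\frac{W^2}{4}|\lambda-\lambda'|^2}$, leaving $\delta_{W^2/2}^\star|\Lambda^{-1}f|$ to which the $L_q\to L_p$ heat-kernel bound \eqref{eq:LpL1bound'} applies with the correct exponent. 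Replacing your binomial expansion by this two-term split fixes the argument immediately.
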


\begin{proof}
Recall that $Tf= e^{-V}\Lambda\delta_{W^2}^\star \Lambda^{-1} f $ where $\delta_{W^2}^\star $ is the convolution with the
heat kernel in dimension $d=2$ at time $t=\frac{1}{2W^{2}}$, and hence $e^V \Lambda^n T f = \Lambda^{n+1} \delta_{W^2}^\star \frac{1}{\Lambda} f$. Smoothness follows from the regularizing effect of the
heat kernel.

\textit{Proof of (i)} We drag the multiplier $\Lambda^{n+1}$ through $\delta_{W^2}^\star$, as follows. 
The decomposition $|\Lambda^{n+1}| \leq 2^{n} (  |\Lambda -\Lambda'|^{n+1}+ |\Lambda'|^{n+1})$ yields
\[
\begin{split}
2^{-n}|e^{V}\Lambda^{n}T f (\lambda )|&= 2^{-n}| \Lambda^{n+1} \delta_{W^2}^\star \Lambda^{-1}f (\lambda )|\\
&\leq
(\delta_{W^2}^\star |{\Lambda}^{n}f|)(\lambda)  + \tfrac{W^{2}}{2\pi }\int d\lambda_{1}'d\lambda_{2}'
e^{-\frac{W^{2}}{2} |\Lambda-\Lambda'|^{2} } |\Lambda-\Lambda'|^{n+1}|\Lambda'|^{-1} |f (\lambda')|\\
& \leq   (\delta_{W^2}^\star |\Lambda^{n}f|) (\lambda )
 + C^{n+1}\ \sqrt{(n+1)!}\ W^{-1-n}   \delta_{W^2/2}^\star (|\Lambda^{-1} f|) (\lambda ),
\end{split}
\]
where in the last term we extracted a fraction of the exponential decay to bound the factor $|\Lambda-\Lambda'|.$
The result $(i)$ now  follows from
\begin{equation}\label{eq:LpL1bound'}
\| \delta_{W^2}^\star \phi \|_{p}\leq  C^{p-q}\left ( \tfrac{W^{2}}{2\pi }\right )^{\frac{p-q}{pq}} \|\phi \|_{q},\qquad 
\forall p\in (1,\infty),\qquad \forall 1\leq q\leq p.
\end{equation}

\textit{Proof of (ii)} From Definition~\eqref{eq:defTf} $\Vert e^{V}\Lambda^{n}\mc{T}f \Vert_p \leq
\Vert \Lambda^{n}e^{-\frac{W^{2}}{2}\lambda^{2}} \Vert_p |f(0)|+\Vert e^{V}\Lambda^{n}Tf \Vert_p.$
The first term is bounded by $\Vert \Lambda^{n}e^{-\frac{W^{2}}{2}\lambda^{2}} \Vert_p |f(0)| \leq C^{n+1} \sqrt{(n+1)!} W^{-n-\frac{2}{p}} |f(0)|.$
The result now follows from \eqref{eq:additionalprop} setting $q=1$. The condition $\Lambda^{-1}f\in L_{1}$ is ensured by
$f\in C_{1}\cap L_{1}$ and $f (0)=1.$ Finally $f,\Lambda^{n}f\in L_{1}\cap L_{\infty}$ ensures $f$ and $\Lambda^{n}f\in L_{p}$ for all $p\in (1,\infty).$
\end{proof}

\begin{rem} Note that while the function $e^{-V}$ is bounded,  the function $e^{V}$ develops a singularity of the form $1/(\lambda_2 - 1)$ at $\lambda_{2}=1$ for $E=0$.
The lemma above shows this causes no problem, as long as the function appears together with the operator $T.$
\end{rem}

\subsection{The top eigenfunction}\label{seceigenfn}

Using the bound of the previous paragraph, we will  construct a solution $u$ of the equation
$\mc T u = u$ normalized to $u (0)=1.$ We call this solution an eigenfunction (with eigenvalue $1$), without
specifying the operator-theoretic construction. 

The strategy is as follows. First, we guess an approximate
eigenfunction $u_0$ such that $\mc T u_0 \approx u_0$. This is done below in
Proposition~\ref{prop:approx}. Then we upgrade it to the true eigenfunction
\begin{equation}\label{eq:correctef1}
u = u_0 + \sum_{n=0}^\infty \mc T^n (\mc T u_0 - u_0)~. \end{equation}
The next proposition ensures that this procedure is justified, and yields an eigenfunction
that  is close to $u_0$.
%%%%%%%%%%%%%%%%%%%%%%%%%%%%%%%%%%%%%
\begin{prop}\label{recon}
Let    $u_0\in L_{1}\cap L_{\infty}$ be a $C^{1}$ function such that 
$u_0(0) =1.$ We define
\begin{equation}\label{eq:vdef}
v := \mc T u_0 - u_0. %= e^{-V-\frac {W^2}2\lambda^2}+ (T-1)u_0~.
\end{equation}
 Then for all $p\in (1,\infty)$ it holds $v\in L_{p}$ and $\Lambda^{-1}v\in L_{p}.$ Moreover, the series
\begin{equation}\label{eq:sum}
u = u_0 + \sum_{n=0}^\infty \mc T^n v
\end{equation}
converges in $L_p$ to a solution $u$ of the equation $\mc T u = u$ which satisfies
\begin{equation}\label{eq:eigenfrecon}
\Vert u - u_0 \Vert_p\leq \Vert v\Vert_p+ \mc O(W) \Vert \Lambda^{-1}v\Vert_p~,\quad
\Vert \Lambda^{-1}(u - u_0)\Vert_p\leq \mc O(W) \Vert \Lambda^{-1}v\Vert_p
\end{equation}
and
\begin{equation}\label{eq:regudecayu}
\begin{split}
\Vert e^{V}\Lambda^{n} (u-\mc{T}u_{0}) \Vert_p &\leq C^{n+1}\Big(  \Vert \Lambda^{n} (u-u_{0}) \Vert_p
+ \frac{\sqrt{(n+1)!}}{W^{1+n}}\Vert \Lambda^{-1}( u-u_{0}) \Vert_p \Big)\\
\Vert \Lambda^{n} (u-\mc{T}u_{0}) \Vert_p &\leq 
 C^{n+1}\ \sqrt{(n+1)!}\ \Vert \Lambda^{-1}( u-u_{0})  \Vert_p.
\end{split}
\end{equation}
The limit function $u$ is independent of the initial choice for $u_{0}.$
\end{prop}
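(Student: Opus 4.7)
The key observation that drives everything is that $v := \mc T u_0 - u_0$ vanishes at the origin. Indeed, directly from the definition of $\mc T$ in \eqref{eq:defTf} together with $V(0)=0$ and $\Lambda(0)=0$, one checks that $(\mc T f)(0) = f(0)$ and $(Tf)(0)=0$ for any $f$; since $u_0(0)=1$, this forces $v(0)=0$. By Lemma~\ref{le:additionalprop}(ii), $\mc T u_0$ is smooth, hence $v$ is $C^1$; combined with $v(0)=0$ this yields $|\Lambda^{-1}v(\lambda)|$ bounded on a neighbourhood of the origin. Away from the origin, $|\Lambda^{-1}|\leq C$ away from a compact neighbourhood is trivially true, and the $L_p$ decay of $v$ inherited from $u_0\in L_1\cap L_\infty$ (and the bound of Lemma~\ref{le:additionalprop}(ii) applied to $u_0$) places both $v$ and $\Lambda^{-1}v$ in $L_p$ for every $p\in(1,\infty)$.

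\textbf{Next, the series.} Since $v(0)=0$ and $(Tf)(0)=0$ for every $f$, induction gives $(T^n v)(0)=0$ for all $n\geq 0$, and therefore
\[
\mc T^n v = T^n v \qquad (n\geq 1).
\]
Thus the formal sum in \eqref{eq:sum} is the honest Neumann-type series $u_0+v+\sum_{n\geq 1}T^n v$, whose convergence is controlled by Corollary~\ref{cor:specbound}. Specifically,
\[
\|T^n v\|_p \leq C\,\bigl(1-\tfrac cW\bigr)^{n-1}\|\Lambda^{-1}v\|_p, \qquad \|\Lambda^{-1}T^n v\|_p \leq \bigl(1-\tfrac cW\bigr)^{n-1}\|\Lambda^{-1}v\|_p,
\]
and summing the geometric series in $n$ (which contributes the factor $\mc O(W)$) yields the two estimates in \eqref{eq:eigenfrecon}.

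\textbf{Eigenvalue equation and regularity.} To verify $\mc T u = u$, I will work with the partial sums $S_N := u_0 + \sum_{n=0}^{N}\mc T^n v$. The identity $\mc T S_N = u_0 + v + \sum_{n=0}^{N}T^{n+1}v = S_{N+1}$ is algebraic. Passing to the limit $N\to\infty$ in $L_p$ uses that $S_N\to u$ in the norm $\|\cdot\|_p + \|\Lambda^{-1}(\cdot)\|_p$, a norm in which $\mc T$ is continuous by Lemma~\ref{le:additionalprop}. Consequently $(u-u_0)(0)=\sum(\mc T^n v)(0)=0$, and therefore $u-\mc T u_0=\mc T(u-u_0)-v=T(u-u_0)-v+v = T(u-u_0)$. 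Now the two displayed inequalities in \eqref{eq:regudecayu} are just Lemma~\ref{le:additionalprop}(i) (with $q=p$) and Corollary~\ref{cor:specbound} (with $m=n$, $n=1$), respectively, applied to $T(u-u_0)$.

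\textbf{Uniqueness.} If $u_0^{(1)}, u_0^{(2)}$ are two choices and $u^{(1)}, u^{(2)}$ the corresponding fixed points, set $w:=u^{(1)}-u^{(2)}$. Both $u^{(i)}(0)=1$, so $w(0)=0$ and $\mc T w = Tw = w$. Iterating gives $w=T^n w$ for every $n\geq 1$, and Corollary~\ref{cor:specbound} bounds $\|w\|_p = \|T^n w\|_p \leq C(1-c/W)^{n-1}\|\Lambda^{-1}w\|_p \to 0$, provided $\|\Lambda^{-1}w\|_p<\infty$; the latter follows from the second bound in \eqref{eq:eigenfrecon} applied to each $u^{(i)}$. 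Hence $w\equiv 0$. The only delicate step in the whole argument is the bookkeeping needed to ensure that $v$ and $\Lambda^{-1}v$ really do lie in $L_p$, since the bound of Lemma~\ref{le:additionalprop}(ii) is stated with a weight $e^{V}$ that is singular along $\lambda_2=1$ when $E=0$; this is the reason one insists on the ``together with $T$'' form of the regularity estimates, and it is the main place where one must be careful.
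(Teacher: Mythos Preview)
Your proof is correct and follows the paper's approach: use $v(0)=0$ so that $\mc T^n v = T^n v$, apply Corollary~\ref{cor:specbound} for the geometric convergence yielding \eqref{eq:eigenfrecon}, and derive \eqref{eq:regudecayu} from the identity $u-\mc T u_0 = T(u-u_0)$ via Lemma~\ref{le:additionalprop}(i) and Corollary~\ref{cor:specbound}. One minor slip: your chain ``$u-\mc T u_0=\mc T(u-u_0)-v=T(u-u_0)-v+v = T(u-u_0)$'' is garbled in the middle; the clean derivation is $u-\mc T u_0 = \mc T u - \mc T u_0 = \mc T(u-u_0) = T(u-u_0)$, using $(u-u_0)(0)=0$. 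Your uniqueness argument at the level of the fixed points is a harmless variant of the paper's, which instead observes that $\mc T^n(u_0^{(1)}-u_0^{(2)}) = T^n(u_0^{(1)}-u_0^{(2)}) \to 0$ directly from the initial data.
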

%%%%%%%%%%%%%%%%%%%%%%%%%%
\begin{proof}
Inserting the definition \eqref{eq:defTf} of $\mc{T}$ and using $u_{0} (0)=1$ we have
\[
v=  \mc T u_0 - u_0 = e^{-V-\frac {W^2}2\lambda^2}-u_{0} +Tu_0.
\]
From $u_{0}\in C^{1}\cap L_{1}$ and $u_{0} (0)=1$ follows $\Lambda^{-1}u_{0}\in L_{1}$ and 
\begin{equation}\label{eq:LpL1bound}
\| \delta_{W^2}^\star \Lambda^{-1}u_{0}\|_{p}\leq \left ( \tfrac{W^{2}}{2\pi }\right )^{\frac{p-1}{p}} \|\Lambda^{-1}u_{0}\|_{1},\qquad 
\forall p\in (1,\infty),\end{equation}
whence 
%By formulas~\eqref{cor:Tnormest} and Lemma~\ref{le:additionalprop}
$Tu_{0}\in L_{p}$ and $\Lambda^{-1}Tu_{0}\in L_{p}.$ Moreover $Tu_{0}\in C^{\infty},$ hence $v$ is continuous and $v\in L_{p}.$ Finally
$u_{0}\in C^{1}\cap L_{1}$ and $e^{-V (0)}=1=u_{0} (0)$ yields  
 $\Lambda^{-1}[e^{-V-\frac {W^2}2\lambda^2}-u_{0} ]\in L_{p},$  hence $\Vert \Lambda^{-1} v\Vert_p<\infty $.
Since $(\mc{T}u_{0}) (0)=u_{0} (0)$ we have $v (0)=0$ 
 hence $\mc{T}v=Tv$ and for all $ n\geq 1$ it holds
\begin{equation} \label{estim}
\Vert \mc T^n v \Vert_p = \Vert T^n v \Vert_p\leq   \Vert \Lambda e^{-\frac{1}{2}V}\Vert_\infty
\Big(1-\frac cW\Big)^{n-1}\Vert \Lambda^{-1}v\Vert_p,
\end{equation}
where we applied equation \eqref{eq:tr1} and  estimate \eqref{cor:Tnormest}.
This inequality implies convergence of the geometric series and 
the relations \eqref{eq:eigenfrecon}.
Since  $(\mc{T}^{n}v) (0)=v (0)=0$ we have  $u (0)=1$ and $u (\lambda)=\lim_{N\to \infty}\mc{T}^{N+1}u_{0},$
hence $\mc{T}u=u.$ Finally, for any two initial functions $u_{0},\tilde{u_{0}},$  $(u_{0}-\tilde{u}_{0}) (0)=0$
implies $\mc{T}^{n}(u_{0}-\tilde{u}_{0})=T^{n}(u_{0}-\tilde{u}_{0}).$ 

The estimates \ref{eq:regudecayu}) follow from  \eqref{eq:additionalprop} together with the relation
\begin{equation}\label{eq:newid}
u = \mc{T}u_{0}+ T (u-u_{0}),
\end{equation}
which is obtained from \eqref{eq:correctef1}.

The independence of the limit of $u_0$ follows from
\eqref{estim} again.
\end{proof}
%%%%%%%%%%%%%%%%%%%%%%%%%%%%%%%%%%%%%%%%%%%%%%%%%%%%%%

\subsubsection{Approximate eigenfunction}
We need to make a good choice for $u_0$ in order for $u - u_0$ to be relatively small.
Since we expect $u$ to be also eigenfunction of the supersymmetric operator $\mf{T},$
we take as initial ansatz some function $U_{0} (R)$ invariant under superunitary rotations
hence $U_{0} (R)= u_{0} (\lambda).$ Guided from equation \eqref{eq:gaussfunc} and
Corollary \ref{cor:exactformulas} and in analogy with
semiclassical analysis we take as ansatzs
\begin{equation*}
U_{0}^{(0)} (R):= e^{-\alpha W\str R^{2}},\qquad  U_{0}^{(M)} (R):= e^{-\alpha W\str R^{2}} \Big (1+\sum_{j=1}^{M}WQ^{(j)} (R)\Big), \ M\geq 1,
 \end{equation*}
where $Q^{(j)}$ is a polynomial in $R$ of degree $j,$ consisting of sums of products of supertraces,
which we assume invariant under superunitary rotations $Q^{(j)} (R)=q^{(j)} (\lambda).$
Finally $\alpha\in \mathbb{C}$ is some constant. Then from \eqref{eq:trans}
\[
\begin{split}
(\mf{T} ( U_{0}^{(M)})) (R) &= e^{-V (R)} e^{-\alpha\mu  W\str R^{2}} \Big (1+\sum_{j=1}^{M}W\tilde{Q}^{(j)} (R)\Big )\\
& =  e^{-\alpha W\str R^{2}} \Big [ e^{\alpha (1-\mu)  W\str R^{2}-V (R)} \Big (1+\sum_{j=1}^{M} W\tilde{Q}^{(j)} (R)\Big )   \Big],
\end{split}
\]
where $\tilde{Q}^{(j)} (R)=  \int d\mu_{\frac{W^{2}}{\mu }} (R') Q^{(j)} (R'+ \mu R).$
We introduce the rotation invariant error function
\begin{equation}\label{eq:error}
\begin{split}
\mathrm{Err}^{(M)} (R)&:= e^{\alpha  W\str R^{2}} [\mf{T} ( U_{0}^{(M)})- U_{0}^{(M)}  ] (R)\\
=&\sum_{j=1}^{M} W  \Big ( \tilde{Q}^{(j)} (R)-Q^{(j)} (R)\Big)+
\Big[e^{\alpha (1-\mu)  W\str R^{2}-V (R)} -1\Big ]  \Big (1+\sum_{j=1}^{M} W\tilde{Q}^{(j)} (R)\Big)
\end{split}
\end{equation}
By abuse of notation we will use the same letter $\mathrm{Err}^{(M)} (\cdot)$ to denote the function of $R$
and the function of $\lambda.$ Then
\begin{equation}\label{eq:errorbis}
v^{(M)} (\lambda )= \mc{T}u_{0}^{(M)}-u_{0}^{(M)} (\lambda )=
 e^{-\alpha  W\lambda^{2}} \mathrm{Err}^{(M)} (\lambda )
\end{equation}
Hence
\[
\|v^{(M)}\|_{p}^{p}= \int d\lambda |v^{(M)} (\lambda ) |^{p}= \frac{1}{W}  \int d\lambda
\left| e^{-\alpha  \lambda^{2}}   \mathrm{Err}^{(M)} (\lambda/\sqrt{W} ) \right|^{p}.
\]
The precision of our approximation is therefore determined by the leading order in $W^{-\frac{1}{2}}$
from $\mathrm{Err} (R/\sqrt{W}).$ The following proposition shows we can make this error function at
least as small as  $O (W^{-3}).$
%%%%%%%%%%%%%%%%%%%%%%%%%%%%%%%%%%%%%%
\begin{prop}\label{prop:approx}
Let $\alpha $ be the solution  of the equation $ \alpha^{2}= \frac{1}{4} (1-\mathcal{E}^{2})$ with $\Re \alpha >0.$
Set $Q^{(1)}=Q^{(2)}=0$ and
\begin{equation}
\begin{split}
Q^{(3)} (R)&:= c_{3} \str R^{3}\\
Q^{(4)} (R ) &:= c_{4}^{0}\str R^{4}+  c_{4}^{1} W^{-1}\str R^{2} +c_{4}^{2} W^{-1} (\str R)^{2}+
 c_{4}^{3} W (\str R^{3})^{2},\\
Q^{(5)} (R)&:=  c_{5}^{0} \str R^{5} + c_{5}^{1} \str R^{3} \str R^{2}+ c_{5}^{2} \str R^{3} (\str R)^{2} \\
&+ c_{5}^{3} W^{-1} \str R\str R^{2}+ c_{5}^{4}W^{-1} \str R^{3} + c_{5}^{5}W^{-2} \str R  \\
&+  c_{5}^{6} W \str R^{3}\str R^{4}+  c_{5}^{7} W^{2} (\str R^{3})^{3} ]
\end{split}
\end{equation}
where
$c_{3},c_{4}^{j},$ $j=0,\dotsc ,3$ and $c_{5}^{j}$ $j=0,\dotsc ,7$ form the solution of the following system of equations
\begin{equation}\label{eq:contants}
\begin{split}
&
3 (2\alpha ) c_{3}= \tfrac{\mathcal{E}^{3}}{3},\ \   
4(2\alpha ) c_{4}^{0}  -9c_{4}^{3}= \tfrac{\mathcal{E}^{4}}{4},\ \  c_{4}^{1} = -\alpha^{2},\ \ (2\alpha ) c_{4}^{2}-c_{4}^{0}=0,
 \ \   6 (2\alpha) c_{4}^{3}= \tfrac{\mathcal{E}^{3}}{3} c_{3}\\
& 5 (2\alpha ) c_{5}^{0} =  \tfrac{\mathcal{E}^{5}}{5}+12 c_{5}^{6},\   \  5(2\alpha )c_{5}^{1} = \tfrac{\mathcal{E}^{3}}{3}c_{4}^{1} ,\ \
5 (2\alpha) c_{5}^{2}-2c_{5}^{6}= \tfrac{\mathcal{E}^{3}}{3}c_{4}^{2} ,\\
&  3(2\alpha )  c_{5}^{3}-5c_{5}^{0}-6 c_{5}^{1}=0,\
(2\alpha )c_{5}^{4}-2c_{5}^{2}= 2 (2\alpha )^{2} c_{3},\  \
(2\alpha )c_{5}^{5}-2c_{5}^{3}=0,
\\  
& 7(2\alpha )c_{5}^{6}-3^{3} c_{5}^{7}  = \tfrac{\mathcal{E}^{3}}{3}c_{4}^{0},\ \  9(2\alpha )c_{5}^{7} = \tfrac{\mathcal{E}^{3}}{3}c_{4}^{3}
\end{split}
\end{equation}
Then for all $p\in [1,\infty]$ it holds
\begin{equation}\label{eq:vbounds}
\begin{split}
&\|v^{(0)}\|_{p}=\mc{O} (W^{-\frac{1}{p}} W^{-\frac{3}{2}}),\quad  \|\Lambda^{-1}v^{(0)}\|_{p}=\mc{O} (W^{-\frac{1}{p}} W^{-1})\\
& \|v^{(M)}\|_{p}=\mc{O} (W^{-\frac{1}{p}} W^{-\frac{M+1}{2}}),\quad \|\Lambda^{-1}v^{(M)}\|_{p}=\mc{O} (W^{-\frac{1}{p}} W^{-\frac{M}{2}}),
\  M=3,4,5.
\end{split}
\end{equation}
\end{prop}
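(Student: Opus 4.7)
The plan is to carry out a supersymmetric WKB-type expansion: after rescaling $R\mapsto\kappa R$ with $\kappa:=W^{-1/2}$, the error function $\mathrm{Err}^{(M)}$ of \eqref{eq:error} becomes a power series in $\kappa$, and the coefficients $c_{j}^{k}$ are chosen to cancel the terms of orders $\kappa^3,\kappa^4,\kappa^5$. The monomials and the internal $W$-powers in the ansatz for each $Q^{(j)}$ are designed so that, after rescaling, each monomial contributes at order $\kappa^{j}$: this is why $Q^{(4)}$ contains the combinations $W^{-1}\str R^2$, $W^{-1}(\str R)^2$ and $W(\str R^3)^2$, while $Q^{(5)}$ contains the eight monomials listed (with the appropriate $W^{\pm 1},W^{\pm 2}$ prefactors), which are precisely those that can be produced by the expansion up to and including order $\kappa^{5}$.

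The key computation has two ingredients. First, unfolding \eqref{eq:Vdef} and using $\mc{E}\bar{\mc{E}}=1$, $\str\mathbbm{1}_2=0$, and the logarithmic series $\log\sdet(\bar{\mc E}-R)=-\sum_{n\geq 1}\tfrac{\mc{E}^{n}}{n}\str R^{n}$ (from $\sdet(\bar{\mc E}-R)=(\bar{\mc E}-\lambda_1)/(\bar{\mc E}-i\lambda_2)$), together with $\alpha^{2}=(1-\mc{E}^{2})/4$, one gets
\[
V(R)=2\alpha^{2}\str R^{2}-\sum_{n\geq 3}\tfrac{\mc{E}^{n}}{n}\str R^{n},\qquad \alpha(1-\mu)W=2\alpha^{2}\mu,
\]
so that $\alpha(1-\mu)W\str R^{2}-V(R)=-\tfrac{4\alpha^{3}}{W+2\alpha}\str R^{2}+\sum_{n\geq 3}\tfrac{\mc{E}^{n}}{n}\str R^{n}$. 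Under $R\to\kappa R$ this is a series in $\kappa$ starting at order $\kappa^{3}$. Second, applying Lemma~\ref{le:someidentities} termwise produces closed-form polynomial expressions for $\tilde Q^{(j)}(R)=\int d\mu_{W^{2}/\mu}(R')Q^{(j)}(R'+\mu R)$, with coefficients polynomial in $\mu$ and $W^{-2}$. Substituting both into \eqref{eq:error}, expanding the exponential, rescaling $R\to\kappa R$, and collecting powers of $\kappa$ up to order $\kappa^{5}$, the cancellation conditions split according to the independent monomials in $R$ (namely $\str R^{2}$, $\str R^{3}$, $\str R^{4}$, $\str R^{5}$, $(\str R)^{2}$, $\str R\,\str R^{2}$, $\str R^{3}\str R^{2}$, $\str R^{3}(\str R)^{2}$, $(\str R^{3})^{2}$, $\str R^{3}\str R^{4}$, $(\str R^{3})^{3}$), yielding exactly the linear system \eqref{eq:contants}. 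This system is triangular when solved in the order $c_{3}$, then $(c_{4}^{1},c_{4}^{3},c_{4}^{0},c_{4}^{2})$, then $(c_{5}^{1},c_{5}^{7},c_{5}^{6},c_{5}^{0},c_{5}^{2},c_{5}^{4},c_{5}^{3},c_{5}^{5})$, and its diagonal factors are $n\cdot(2\alpha)$ with $\Re\alpha>0$, so a solution exists.

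With the $c_{j}^{k}$ fixed, $\mathrm{Err}^{(M)}(\kappa R)$ equals $\kappa^{M+1}$ times a smooth function of $R$ of polynomial growth, uniformly in $W$. Rescaling $\lambda=\kappa\tilde\lambda$ in the integral defining $\|v^{(M)}\|_{p}$ and absorbing the polynomial growth into the Gaussian weight $e^{-\alpha W\lambda^{2}}=e^{-\alpha|\tilde\lambda|^{2}}$ gives $\|v^{(M)}\|_{p}= O(W^{-1/p}\cdot W^{-(M+1)/2})$ after the Jacobian $\kappa^{2}=W^{-1}$ is accounted for. For the $\Lambda^{-1}$ estimate, every monomial $\str R^{n}$ appearing in the expansion is divisible by $\Lambda=\lambda_{1}-i\lambda_{2}$ (via $\lambda_{1}^{n}-(i\lambda_{2})^{n}=\Lambda\sum_{k=0}^{n-1}\lambda_{1}^{k}(i\lambda_{2})^{n-1-k}$), so $\Lambda^{-1}\mathrm{Err}^{(M)}(\kappa\tilde\lambda)$ retains a polynomial factor in $\tilde\lambda$ of one lower degree; in the original scale this sacrifices exactly one factor of $\kappa$, yielding $\|\Lambda^{-1}v^{(M)}\|_{p}=O(W^{-1/p}\cdot W^{-M/2})$. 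The $M=0$ case is the same computation with no cancellation, giving the stated $\kappa^{3}$ (respectively $\kappa^{2}$) rate. The main obstacle is the bookkeeping at order $\kappa^{5}$, where $\mathrm{Err}^{(5)}$ receives contributions not only from the direct $\kappa^{5}$ term of $e^{X}-1$, but also from the cross products $(e^{X}-1)\cdot W\tilde Q^{(3)}$, $(e^{X}-1)\cdot W\tilde Q^{(4)}$, and $X^{2}$ and $X^{3}$ terms; verifying that the eight-parameter ansatz for $Q^{(5)}$ spans precisely the space of monomials produced, and that the resulting $8\times 8$ system agrees with the displayed equations in \eqref{eq:contants}, is the most delicate bookkeeping step.
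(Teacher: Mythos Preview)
Your proposal is correct and follows essentially the same approach as the paper: rescale $R\to W^{-1/2}R$, expand $V$ and $\alpha(1-\mu)W\str R^2$ in powers of $W^{-1/2}$, compute the $\tilde Q^{(j)}$ via the Gaussian supersymmetric identities, and match coefficients in $\mathrm{Err}^{(M)}(R/\sqrt W)$ to obtain the linear system \eqref{eq:contants}. Two minor remarks: the identities you need for the $\tilde Q^{(5)}$ computation (e.g.\ for $\str(R'+R)^{5}$, $\str(R'+R)^{2}\str(R'+R)^{3}$, $\str(R'+R)^{3}\str(R'+R)^{4}$, $[\str(R'+R)^{3}]^{3}$) go beyond Lemma~\ref{le:someidentities} and are collected in the Appendix of the paper; and your divisibility-by-$\Lambda$ argument for the $\|\Lambda^{-1}v^{(M)}\|_{p}$ bound is in fact more explicit than what the paper writes (``The result follows''), but is exactly the mechanism behind it.
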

\begin{proof}
It is more convenient to study the error term in $R$ coordinates.
After rescaling $R\to W^{-\frac{1}{2}}R$ the polynomials $Q^{3},Q^{4}$ can be written as
\[
Q^{(3)} ( W^{-\frac{1}{2}}R)= \frac{1}{\sqrt{W}^{3}} P_{3} (R),\quad Q^{(4)}  ( W^{-\frac{1}{2}}R)=  \frac{1}{\sqrt{W}^{4}} P_{4} (R)
\quad Q^{(5)}  ( W^{-\frac{1}{2}}R)=  \frac{1}{\sqrt{W}^{5}} P_{5} (R)
\]
where we defined
\[
\begin{split}
& P_{3} (R) :=   c_{3}\str R^{3},\qquad P_{4} (R):= c_{4}^{1}\str R^{4}+  c_{4}^{2}\str R^{2} +c_{4}^{3} (\str R)^{2}+
 c_{4}^{4} (\str R^{3})^{2}\\
& P_{5} (R)=  c_{5}^{0} \str R^{5} + c_{5}^{1} \str R^{3} \str R^{2}+ c_{5}^{2} \str R^{3} (\str R)^{2}  + c_{5}^{3}  \str R\str R^{2}
           + c_{5}^{4} \str R^{3}\\
&\qquad  + c_{5}^{5} \str R   +  c_{5}^{6}  \str R^{3}\str R^{4}+  c_{5}^{7}  (\str R^{3})^{3}.\
\end{split}
\]
The $W$ prefactors ensure that all terms in $P_{4}$ contribute to the same order $\sqrt{W}^{-4}$ and
all terms in $P_{5}$ contribute to the same order $\sqrt{W}^{-5}.$
It follows from Corollary~\ref{cor:exactformulas} and the additional formulas in the Appendix 
\[
\tilde{Q}^{(M)} ( W^{-\frac{1}{2}}R)=\frac{1}{\sqrt{W}^{M}} \tilde{P}_{M} (R),\qquad M=3,4,5,
\]
where
\[
\begin{split}
 \tilde{P}_{3} (R) =& P_{3} (\mu R)=  \mu^{3} c_{3}\str R^{3},\\
\tilde{P}_{4} (R)=& P_{4} (\mu R)+\frac{1}{W}\ [2  c_{4}^{1} \mu^{3} (\str R)^{2}
+9 c_{4}^{4} \mu^{5}\str R^{4} ]   + \mc{O} (W^{-3}).\\
\tilde{P}_{5} (R) =& P_{5} (\mu R) + \frac{1}{W}\Big [
 5 c_{5}^{0} \str R\str R^{2} + 6  c_{5}^{1}  \str R\str R^{2}
+6 c_{5}^{2} \str R^{3} +2 c_{5}^{3} \str R \\
&  +  c_{5}^{6} [2 (\str R)^{2}\str R^{3}+12\str R^{5}]+
  c_{5}^{7} 3^{3}   \str R^{3}\str R^{4} \Big ]  + \mc{O} (W^{-2}) 
\end{split}
\]
On the other hand
\begin{equation}\label{eq:Vexpansion}
V (R)=\frac{1-\mc{E}^{2}}{2}\str R^{2}-\sum_{q\geq 3} \frac{\mc{E}^{q}}{q!}\str R^{q}= 
\frac{1-\mc{E}^{2}}{2}(\lambda_{1}^{2}+\lambda_{2}^{2}) -
\sum_{q\geq 3} \frac{\mc{E}^{q}}{q!}\big (  \lambda_{1}^{q}- (i\lambda_{2})^{q}\big )
\end{equation}
where the sum is absolutely convergent for $|\lambda|$ small. Moreover
\[
\mu = (1+\frac{2\alpha }{W})^{-1} =
\sum_{n\geq 0} (-2\alpha )^{n} W^{-n},\quad  W(1-\mu)= 2\alpha \mu= (2\alpha ) - (2\alpha )^{2}W^{-1}+\mc{O} (W^{-2}).
\]
Rescaling $R\to W^{-\frac{1}{2}}R$ and setting  $2\alpha^{2}= \frac{1}{2} (1-\mathcal{E}^{2})$  we obtain
\[
\begin{split}
& [\alpha W(1-\mu) W^{-1}\str R^{2}-V ((RW^{-\frac{1}{2}})] =
%&\quad 
\frac{1}{W^{\frac{3}{2}}} V_{3} (R)+ \frac{1}{W^{\frac{4}{2}}} V_{4} (R)+\frac{1}{W^{\frac{5}{2}}} V_{5} (R)+ \mc{O} (W^{-3}),
\end{split}
\]
where $V_{3} (R):= \frac{\mathcal{E}^{3}}{3}\str R^{3},$ 
$ V_{4} (R):=    \frac{\mathcal{E}^{4}}{4} \str R^{4} - 4\alpha^{3} \str R^{2}, $ and
$V_{5} (R):= \frac{\mathcal{E}^{5}}{5}\str R^{5}.$ 
Hence
\[
e^{\alpha (1-\mu)\str R^{2}-V ((RW^{-\frac{1}{2}})}= 1+ W^{-\frac{3}{2}} V_{3} (R)
+ W^{-\frac{4}{2}} V_{4} (R)+W^{-\frac{5}{2}} V_{5} (R)+ \mc{O} (W^{-3}).
\]
Inserting all this in the rescaled error function yields for $M=0$
\begin{equation}\label{eq:err0bound}
\mathrm{Err}^{(0)} (RW^{-\frac{1}{2}})=  W^{-\frac{3}{2}} V_{3} (R)+ \mc{O} (W^{-2})= \mc{O} (W^{-\frac{3}{2}}).
\end{equation}
Setting $ 6\alpha c_{3}= \frac{\mathcal{E}^{3}}{3}$ we obtain for $M=3$ 
\begin{equation}\label{eq:err3bound}
\begin{split}
\mathrm{Err}^{(3)} (RW^{-\frac{1}{2}})&= W [ \tilde{Q}^{(3)} (RW^{-\frac{1}{2}})-Q^{(3)} (RW^{-\frac{1}{2}}) ]\\
& + [1+ W\tilde{Q}^{(3)} (RW^{-\frac{1}{2}})]\ [ W^{-\frac{3}{2}} V_{3} (R)
+ \mc{O} (W^{-2})]\\
&=  \frac{1}{\sqrt{W}} (\mu^{3}-1)P_{3} (R)+  \frac{1}{\sqrt{W}^{3}} V_{3} (R) + \mc{O} (W^{-2})\\
&= W^{-\frac{3}{2}} \Big (-6\alpha P_{3} (R)+  V_{3} (R)   \Big )+ \mc{O} (W^{-2})=\mc{O} (W^{-2}).
\end{split}
\end{equation}
For $M=4,$ inserting the values \eqref{eq:contants} we get
\[
\begin{split}
&\mathrm{Err}^{(4)} (RW^{-\frac{1}{2}})= W [\tilde{Q}^{(3)} (RW^{-\frac{1}{2}})Q^{(3)} (RW^{-\frac{1}{2}})+ \tilde{Q}^{(4)} (RW^{-\frac{1}{2}})-Q^{(4)} (RW^{-\frac{1}{2}}) ]\\
&\quad  + [1+ W (\tilde{Q}^{(3)} (RW^{-\frac{1}{2}})+\tilde{Q}^{(4)} (RW^{-\frac{1}{2}}))]\ [ W^{-\frac{3}{2}} V_{3} (R)+ W^{-2}V_{4} (R)
+ \mc{O} (W^{-\frac{5}{2}})]\\
&\quad =  \frac{1}{W^{2}} \Big [ (-8c_{4}^{1}\alpha  +9c_{4}^{4}+ \frac{\mathcal{E}^{4}}{4} ) \str R^{4}
+ ( - 4c_{4}^{2}\alpha  - 4\alpha^{3}) \str R^{2}\\
&\quad + ( - 4c_{4}^{3}\alpha+2c_{4}^{1}) (\str R)^{2}+ (  \frac{\mathcal{E}^{3}}{3} c_{3} - 12 \alpha c_{4}^{4}) (\str R^{3})^{2}
\Big ]+ \mc{O} (W^{-\frac{5}{2}})= \mc{O} (W^{-\frac{5}{2}}).
\end{split}
\]
Finally, the same  arguments yield  $\mathrm{Err}^{(5)} (RW^{-\frac{1}{2}})= \mc{O} (W^{-3}).$
The result follows.
\end{proof}
\begin{rem}\label{rem:allE} Note that  Proposition~\ref{prop:approx} remains valid for all 
$|E| < 2$, since it only relies on the properties of the kernel in the vicinity of the origin. 
For the same reason, the conclusion remains valid if the kernel (or the contour) is 
deformed outside a vicinity of the origin. We shall use this in  Section~\ref{S:deform}. 
\end{rem}

A first consequence of these results is the following Corollary.

\begin{cor}\label{c:bounds} Let $u^{(0)}_{0}, u^{(3)}_{0},$ $u_{0}^{(4)}$ and $u_{0}^{(5)}$ as above. Then
for all $p \in (1, \infty)$
\[
\begin{split}
&\| u - u^{(0)}_{0} \|_p \leq \mc{O} (W^{-\frac{1}{p}-\frac{1}{2}}),\qquad \|\Lambda^{-1} (u - u^{(0)}_{0}) \|_p \leq \mc{O} (W^{-\frac{1}{p}})\\
&\| u - u^{(M)}_{0} \|_p \leq \mc{O} (W^{-\frac{1}{p}-\frac{(M-1)}{2}}),\qquad \|\Lambda^{-1} (u - u^{(M)}_{0}) \|_p \leq
\mc{O} (W^{-\frac{1}{p}-\frac{(M-2)}{2}})\quad M=3,4,\\
&\| u - u^{(5)}_{0} \|_p \leq \mc{O} (W^{-\frac{1}{p}-\frac{3}{2}}),\qquad \|\Lambda^{-1} (u - u^{(5)}_{0}) \|_p \leq
\mc{O} (W^{-\frac{1}{p}-\frac{3}{2}}).\\
\end{split}
\]
Moreover 
\[\begin{split} \| u  \|_p \leq \mc{O} (W^{-\frac{1}{p}})~, \quad 
\forall n\geq1 \,\,\, \| \Lambda^{n} u  \|_p \leq C^{n}\sqrt{n!} \mc{O} (W^{-\frac{1}{p}-\frac{1}{2}})~, \\
\forall n \geq 0 \,\,\, \| e^{V}\Lambda^{n} u  \|_p \leq C^{n+1}\sqrt{n!} \Big ( \mc{O} (W^{-\frac{1}{p}-\frac{1}{2}})+\mc{O} (W^{-\frac{1}{p}-\frac{n}{2}})  \Big)
\end{split}\]
\end{cor}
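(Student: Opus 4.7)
The argument combines Proposition~\ref{recon}, which converts the approximate-eigenfunction bounds of Proposition~\ref{prop:approx} into bounds on the true eigenfunction $u$, with elementary Gaussian computations on the explicit functions $u_0^{(M)}$ and their mutual differences; Lemma~\ref{le:additionalprop} enters for the $e^V$-weighted estimate.

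\textbf{Step 1 — Approximation bounds.} Applying Proposition~\ref{recon} with $u_0 = u_0^{(M)}$ gives
\[
\|u - u_0^{(M)}\|_p \leq \|v^{(M)}\|_p + \mc{O}(W)\|\Lambda^{-1}v^{(M)}\|_p, \quad \|\Lambda^{-1}(u - u_0^{(M)})\|_p \leq \mc{O}(W)\|\Lambda^{-1}v^{(M)}\|_p,
\]
and Proposition~\ref{prop:approx} yields the claim directly for $M = 5$, as well as the $\Lambda^{-1}$-bounds for all $M \in \{0,3,4,5\}$. For $M \in \{0,3,4\}$, however, the naive $L_p$-bound is a factor $W^{1/2}$ too weak, since the $\mc{O}(W)$ prefactor consumes the improvement gained by dividing by $\Lambda$. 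To recover the sharp rate I would use
\[
\|u - u_0^{(M)}\|_p \leq \|u - u_0^{(5)}\|_p + \|u_0^{(5)} - u_0^{(M)}\|_p,
\]
where the first term is sharp from the $M = 5$ case, and the second, being the $L_p$-norm of the explicit function $e^{-\alpha W \lambda^2}\sum_{j=M+1}^{5} W Q^{(j)}(\lambda)$, reduces by the substitution $\lambda \to \lambda/\sqrt W$ to a $W$-independent Gaussian integral with overall scale $W^{-1/p - (j-2)/2}$ for the $j$-th summand.

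\textbf{Step 2 — Bounds on $u$ and $\Lambda^n u$.} I would split $u = u_0^{(5)} + (u - u_0^{(5)})$. The Gaussian term is explicit: $\|\Lambda^n u_0^{(5)}\|_p = C^n\sqrt{n!}\,\mc{O}(W^{-1/p - n/2})$, which gives $\mc{O}(W^{-1/p})$ for $n=0$ and is dominated by $C^n\sqrt{n!}\,\mc{O}(W^{-1/p - 1/2})$ for $n \geq 1$. The correction $\|\Lambda^n(u - u_0^{(5)})\|_p$ is controlled by the second inequality of \eqref{eq:regudecayu}, applied to $T(u - u_0^{(5)}) = u - \mc T u_0^{(5)}$ (via \eqref{eq:newid}), giving order $C^{n+1}\sqrt{(n+1)!}\,\mc{O}(W^{-1/p - 3/2})$ by Step 1.

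\textbf{Step 3 — The $e^V\Lambda^n u$ bound.} Since $u = \mc T u$, equation \eqref{eq:defTf} gives $e^V u = e^{-W^2\lambda^2/2} + e^V T u$. The first piece contributes $C^n\sqrt{n!}\,\mc{O}(W^{-n-2/p})$ after multiplication by $\Lambda^n$. For the second I would invoke Lemma~\ref{le:additionalprop}(i) with $f = u$ and a Lebesgue exponent $q \in [1, 2)$ chosen depending on $p$ so that the prefactor $W^{2(p-q)/(pq) - 1 - n}$ matches the claim: $q = 1$ suffices for $p \leq 2$, while for $p > 2$ one takes $q$ close to (but less than) $2$, since $\|\Lambda^{-1} u\|_q$ diverges for $q \geq 2$ due to $u(0) = 1$. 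The norm $\|\Lambda^{-1} u\|_q$ itself is bounded by triangle inequality with $u_0^{(0)}$: polar integration in 2D yields $\|\Lambda^{-1} u_0^{(0)}\|_q = \mc{O}(W^{-1/q + 1/2})$ for $q \in [1, 2)$, dominating the Step 1 correction.

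\textbf{Main obstacle.} The delicate point is Step 1 for $M \in \{0, 3, 4\}$, where the naive application of Proposition~\ref{recon} is off by a factor $W^{1/2}$. The bootstrap through the best approximation $u_0^{(5)}$ circumvents this, reducing the remaining gap to an explicit Gaussian integral. A secondary subtlety is the $p$-dependent choice of the Lebesgue exponent $q$ in Step 3, forced by the requirement $q<2$ coming from $u(0) \neq 0$.
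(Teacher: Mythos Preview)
Your Step~1 matches the paper's argument: both bootstrap the weak bound from Proposition~\ref{recon} through a better approximation (the paper steps from $u_0^{(M)}$ to $u_0^{(M+1)}$, you go directly to $u_0^{(5)}$, which is equivalent).

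In Step~2 there is a small slip: the second inequality of \eqref{eq:regudecayu} bounds $\|\Lambda^n(u-\mc T u_0^{(5)})\|_p=\|\Lambda^n T(u-u_0^{(5)})\|_p$, not $\|\Lambda^n(u-u_0^{(5)})\|_p$. You are missing the piece $\|\Lambda^n v^{(5)}\|_p$, which is harmless (it is Gaussian at scale $W^{-1/2}$ times a controlled factor) but must be mentioned. The paper avoids this by splitting $u=\mc T u_0^{(3)}+(u-\mc T u_0^{(3)})$ directly via \eqref{eq:newid}: the first piece is the explicit function $e^{-V}e^{-\alpha\mu W\lambda^2}(1+W\mu^3 q^{(3)})$, and the second is exactly what \eqref{eq:regudecayu} bounds.

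Your Step~3 takes a genuinely different route. The paper reuses the same decomposition $u=\mc T u_0^{(3)}+(u-\mc T u_0^{(3)})$: multiplying by $e^V$ kills the $e^{-V}$ prefactor in the first piece, leaving a pure Gaussian whose $\|\Lambda^n\cdot\|_p$ is $\mc O(W^{-1/p-n/2})$, while the second piece is handled by the \emph{first} line of \eqref{eq:regudecayu}, which already carries the $e^V$ weight and only requires $\|\Lambda^n(u-u_0^{(3)})\|_p$ and $\|\Lambda^{-1}(u-u_0^{(3)})\|_p$ as input. This avoids your $p$-dependent choice of the auxiliary exponent $q<2$ in Lemma~\ref{le:additionalprop}(i), which you are forced into because you apply the lemma with $f=u$ rather than $f=u-u_0$ (and $u(0)\neq 0$). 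Your route works, but the paper's is more uniform: one decomposition serves both the unweighted and the $e^V$-weighted estimates, and no case distinction on $p$ is needed.
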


Note that combining Proposition~\ref{prop:approx} with equations~\eqref{eq:eigenfrecon}, we have:
\[
\begin{split}
&\| u - u^{(0)}_{0} \|_p \leq \mc{O} (W^{-\frac{1}{p}}),\qquad \|\Lambda^{-1} (u - u^{(0)}_{0}) \|_p \leq \mc{O} (W^{-\frac{1}{p}})\\
&\| u - u^{(M)}_{0} \|_p \leq \mc{O} (W^{-\frac{1}{p}-\frac{(M-2)}{2}}),\qquad \|\Lambda^{-1} (u - u^{(M)}_{0}) \|_p \leq
\mc{O} (W^{-\frac{1}{p}-\frac{(M-2)}{2}})\quad M=3,4,5.\\
\end{split}
\]
To improve this bounds, we need a longer argument.
\begin{proof}
First, note that $u_{0}^{(3)}-u_{0}^{(0)}=e^{-\alpha W\lambda^{2}}Wq^{(3)} (\lambda),$  and
\[ u_{0}^{(M+1)}-u_{0}^{(M)}=e^{-\alpha W\lambda^{2}}Wq^{(M+1)} (\lambda) \quad \text{for} \quad M=3,4~.\]
Then $\| u - u^{(0)}_{0} \|_p \leq \| u - u^{(3)}_{0} \|_p+\| u_{0}^{(3)} - u^{(0)}_{0} \|_p\leq  \mc{O} (W^{-\frac{1}{p}-\frac{1}{2}}).$
Similar arguments show that the norms for the cases $M=3,4$ are also improved by a factor $\frac{1}{2}.$
This argument yields no improvement on $\|\Lambda^{-1} (u - u^{(M)}_{0}) \|_p,$ because the $\Lambda^{-1}$ term generates an additional factor
$W^{\frac{1}{2}}.$
To prove the last inequalities note that  $\|u^{(M)}_{0} \|_p=\mc{O} (W^{-\frac{1}{p}})$ (for, say, $M=0$) yields
$\|u \|_p= \mc{O} ( W^{-\frac{1}{p}}).$ Moreover inserting \eqref{eq:newid} 
\[
\begin{split}
\| \Lambda^{n} u  \|_p &\leq \| \Lambda^{n} \mc{T}u_{0}^{(3)}  \|_p +\| \Lambda^{n} (u-\mc{T}u_{0}^{(3)})  \|_p
\leq  \| \Lambda^{n} \mc{T}u_{0}^{(3)}  \|_p + C^{n+1}\sqrt{n!}\ \Vert \Lambda^{-1}( u-u_{0})  \Vert_p\\
&\leq C^{n+1}\sqrt{n!} \Big ( \mc{O} (W^{-\frac{1}{p}-\frac{n}{2}})+\mc{O} (W^{-\frac{1}{p}-\frac{1}{2}})  \Big)=
 C^{n+1}\sqrt{n!} \mc{O} (W^{-\frac{1}{p}-\frac{1}{2}})
\end{split}
\]
where in the first line we used \eqref{eq:regudecayu} and in the last line we used the explicit expression
$\mc{T}u_{0}^{(3)}=e^{-V (\lambda)} e^{-\alpha W\mu \lambda^{2}} (1+W\mu^{3}q^{(3)} (\lambda)),$ together with the constraint $n\geq 1.$
The same argument yields the bound on $\| e^{V}\Lambda^{n} u  \|_p.$
\end{proof}

\begin{rem}\label{re:ubound}
In the rest of the paper we will mostly use $u_{0}^{(0)},$ $u_{0}^{(3)}$ and $u_{0}^{(5)}.$ While the latter gives a better approximation of $u$,
the first two are easier to deal with. This last feature is particularly useful in some parts of the proof.
\end{rem}
A key ingredient of our proofs will be the comparison of the function $e^{-V},$ where the operator $\mc T^{n}$
applies, with the exact eigenfunction $u$. This is done in the following lemma.
\begin{lemma}
Let $u_{0}^{(M)}$ as above and  $u$ be the solution of $\mc{T}u=u$ constructed from $u_{0}^{(M)}$ via \eqref{eq:correctef1}.
For $p\in (1,\infty),$ it holds
\begin{equation}\label{eq:uVerrorbound}
\begin{split}
\|(e^{-V}-u) \|_p &\leq  C \\
\|\Lambda^{-1} (e^{-V}-u) \|_p &\leq \ \left\{
\begin{array}{ll}
C W^{\frac{1}{2}-\frac{1}{p}} & p>2\\
C \ln W  & p=2\\
C & p<2
\end{array}
\right.\\
\end{split}
\end{equation}
\end{lemma}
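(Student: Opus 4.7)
The first bound is immediate from the triangle inequality: $\|e^{-V}\|_p \leq C$ uniformly in $W$ (since $V$ does not depend on $W$ and $\Re V \geq 0$ grows quadratically at infinity; cf.\ the discussion around \eqref{eq:normleq1}), while $\|u\|_p = O(W^{-1/p})$ by Corollary~\ref{c:bounds}.

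For the second bound, the plan is to write $e^{-V} - u = (e^{-V} - u_{0}^{(0)}) + (u_{0}^{(0)} - u)$ and handle each summand separately. The approximation error is controlled directly by Corollary~\ref{c:bounds}, which gives $\|\Lambda^{-1}(u_{0}^{(0)} - u)\|_p = O(W^{-1/p}) = O(1)$, already dominated by any of the claimed bounds. The main task is then to bound $\|\Lambda^{-1}(e^{-V} - u_{0}^{(0)})\|_p$ by explicit computation. The integrand $|\Lambda|^{-p}|e^{-V} - u_{0}^{(0)}|^p$ is integrable at $\lambda = 0$ because both $e^{-V}$ and $u_{0}^{(0)} = e^{-\alpha W|\lambda|^2}$ equal $1$ there; the only question is how the resulting integral depends on $W$.

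The key asymmetry near the origin is that the saddle-point relation $\mc E \bar{\mc E} = 1$ forces $\nabla V(0) = 0$ (this can be checked directly from the explicit formula \eqref{eq:Vpol}), so $V(\lambda) = O(|\lambda|^2)$ with an $O(1)$ Hessian, whereas $u_{0}^{(0)}(\lambda) - 1 = -\alpha W|\lambda|^2 + O((W|\lambda|^2)^2)$. Thus on the scale $|\lambda| \leq 1/\sqrt{W}$ the $W$-term dominates and $|e^{-V} - u_{0}^{(0)}| \leq C W |\lambda|^2$. I therefore split the integral $\int |\lambda|^{-p}|e^{-V} - u_{0}^{(0)}|^p\, d\lambda$ at $|\lambda| = 1/\sqrt{W}$. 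The inner region is bounded by $(CW)^p \int_0^{1/\sqrt{W}} r^{p+1}\,dr = O(W^{(p-2)/2})$. In the outer region, $u_{0}^{(0)}$ is already exponentially small (and contributes $O(W^{p/2 - 1})$ after substituting $\lambda \mapsto \lambda/\sqrt{W}$), while $e^{-V}$ contributes primarily on the annulus $1/\sqrt{W} \leq |\lambda| \leq 1$, yielding $\int_{1/\sqrt{W}}^{1} r^{1-p}\, dr$ -- which is $O(W^{(p-2)/2})$ for $p > 2$, $O(\ln W)$ for $p = 2$, and $O(1)$ for $p < 2$; the tail $|\lambda| \geq 1$ contributes $O(1)$ by the Gaussian decay of $e^{-V}$.

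Taking $p$-th roots and collecting terms produces exactly the stated bounds; at $p = 2$ the sharper $O(\sqrt{\ln W})$ we obtain is absorbed into the stated $C\ln W$. The only step requiring a bit of care is the vanishing of the linear part of $V$ at the origin, which is what gives the enhanced cancellation $|e^{-V} - u_{0}^{(0)}| = O(W|\lambda|^2)$ on small scales rather than merely $O(|\lambda|)$; this is a direct consequence of having chosen $\mc E$ to be a saddle point of the single-site potential in \eqref{eq:absaddledef}.
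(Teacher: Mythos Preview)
Your proof is correct and follows essentially the same approach as the paper's: reduce to comparing $e^{-V}$ with $u_0^{(0)}$ via Corollary~\ref{c:bounds}, then split the radial integral at $|\lambda|=W^{-1/2}$ and treat the inner, annular, and tail regions exactly as the paper does. The only cosmetic difference is that the paper first replaces $e^{-V}$ by the pure Gaussian $e^{-\lambda^2/2}$ (using that $\|\Lambda^{-1}(e^{-V}-e^{-\lambda^2/2})\|_p\leq C$ uniformly in $W$, which is the same $\nabla V(0)=0$ observation you make) before carrying out the explicit radial estimate, whereas you work directly with $e^{-V}$; both routes are equally short.
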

\begin{proof}
The first bound follows directly from $ \|e^{-V}\|_{p}\leq \mc{O} (1)$ and Remark~\ref{re:ubound}. It suffices to prove  the second bound replacing
$u$ by $u_{0}^{(0)}.$ Indeed by Corollary~\ref{c:bounds} $\|\Lambda^{-1} (u-u_{0}^{(0)}) \|_p\leq \mc{O} ( W^{\frac{1}{2}-\frac{1}{p}})$
where $ W^{\frac{1}{2}-\frac{1}{p}}\leq \ln W$ for $p=2$ and $ W^{\frac{1}{2}-\frac{1}{p}}\leq 1$ for $p>2.$ By similar arguments we can replace
$e^{-V}$ by $e^{-\frac{1}{2}\lambda^{2}}$ in the estimate. We have
\[
\|\Lambda^{-1} (e^{-\frac{1}{2}\lambda^{2}}- e^{-W\alpha \lambda^{2}}) \|_{p}^{p}\leq C \int_{0}^{\infty} t^{1-p}\left | e^{-\frac{t^{2}}{2}}- e^{-W\alpha t^{2}} \right |^{p} dt.
\]
We use different bounds in the various integration regions. For $t\leq W^{-\frac{1}{2}}$ we have
\[
\int_{0}^{\frac{1}{\sqrt{W}}} t^{1-p}\left | e^{-\frac{t^{2}}{2}}- e^{-W\alpha t^{2}} \right |^{p} dt\leq C W^{p}\int_{0}^{\frac{1}{\sqrt{W}}} t^{p+1}=
\mc{O} (W^{\frac{p}{2}-1}) .
\]
In all other regions we estimate $e^{-\frac{t^{2}}{2}}$ and  $e^{-W\alpha t^{2}}$ separately. Direct computation give
$\int_{\frac{1}{\sqrt{W}}}^{\infty} t^{1-p}e^{-Wp \Re \alpha\, t^{2}}  dt= C   W^{\frac{p}{2}-1}$ and
$\int_{1}^{\infty} t^{1-p}e^{-\frac{1}{2}p t^{2}}  dt= \mc{O} (1).$ Finally 
\[
\begin{split}
\int_{\frac{1}{\sqrt{W}}}^{1} t^{1-p}e^{-\frac{1}{2}p t^{2}}  dt\leq \left\{\begin{array}{ll}
C & p<2\\
C \ln W & p=2\\
C W^{\frac{p}{2}-1}& p>2\\
\end{array} \right.  
\end {split}
\]
This concludes the proof.
\end{proof}

\section{Proof of Theorems \ref{thm} and \ref{thm2} away from the edges}\label{prf}

Throughout this section we assume that $|E| \leq \sqrt{\frac{32}9}$. This assumption
will be relaxed in Section~\ref{S:deform}. For technical reasons we will initially assume also $E\neq 0.$
We will explain at the end of this section how to deal with $E=0.$

\subsection{Preliminary results}\label{subs:prel}

Recalling the definition of $\rho (E)$ and $\rho_{N} (E)$ \eqref{eq:defdos},
the relevant quantities to study are
$\lim_{\eps\searrow 0}\frac{1}{N} \big\langle \tr G[H_N](E_\eps)\big\rangle$ and
$\lim_{\eps\searrow 0} \big\langle  G_{00}[H](E_\eps)\big\rangle,$ 
where $H_{N}$ is the finite-volume operator, whereas $H$ is the operator in infinite volume.
From  (\ref{finalrep}) we have
\begin{equation}\begin{split}\label{finalreprep}
\lim_{\eps\searrow 0}\frac{1}{N} \big\langle \tr G[H_N](E_\eps)\big\rangle - \mc E  &= \frac{1}{N} \sum_{y=1}^N I_{N}(y)~, \\
\text{where} \quad I_{N}(y) &=
\frac{1}{2\pi }\int \frac{\ud\lambda_1\ud\lambda_2}{\lambda_1-i\lambda_2}\ e^{V(\lambda)}\  \big[\mc T^{y-1}e^{-V}](\lambda)\ 
\big[\mc T  ^{N-y}e^{-V}](\lambda)~.
\end{split}
\end{equation}
For the individual diagonal matrix elements of the finite volume resolvent, we have a similar expression
\begin{equation}\label{finalreprep'}
\lim_{\eps\searrow 0}\big\langle G_{yy}[H_N](E_\eps)\big\rangle - \mc E = -W^2(I_{N}(y-1) + I_{N}(y+1))+ (2W^2 + 1) I_{N}(y)~, \quad y \neq 1,N~.  
\end{equation}
Recall that  $u$ is the top eigenfunction constructed in Sect.~\ref{seceigenfn} above, i.e. $\mc{T}u=u$ and
$u (0)=1.$ Inserting the decomposition $e^{-V} = u + (e^{-V}-u) $ % whose second summand vanishes at $0$,
we get for $k\geq 1$ (cf. \eqref{eq:tr1})
\[
\mc T^{k} e^{-V} = u + \mc T^{k}\big[e^{-V}-u\big] = u + T^{k} (e^{-V}-u)
\]
where we used  $(e^{-V}-u) (0)=0.$ Therefore $I_{N} (y)= I_1 + I_2(y-1) + I_2(N-y) + I_3(y-1) $ where
\begin{equation}\label{eq:defIj}
\begin{split}
I_{1}&:=\frac{1}{2\pi } \int   \ud\lambda_1\ud\lambda_2\  [\Lambda^{-1}e^{V}]\ u^2 \\
I_{2} (k)&:=\frac{1}{2\pi }  \int   \ud\lambda_1\ud\lambda_2\ [\Lambda^{-1}e^{V}]\ u \ [T^{k-1} (e^{-V}-u)]\qquad 1\leq k\leq N\\
I_{3} (k)&:= \frac{1}{2\pi } \int   \ud\lambda_1\ud\lambda_2\ [\Lambda^{-1}e^{V}]\  [T^{k-1} (e^{-V}-u)]\  [T^{N-k} (e^{-V}-u)].
\end{split}
\end{equation}
Note that $I_{1}$ is independent of $N$ and $y.$ The following proposition estimates the decay
of $I_{2}$ and $I_{3}$ and is a key ingredient for the proof of our results.
The estimate we obtain here for $I_{2}$ is not optimal. We will prove a stronger bound in
Proposition~\ref{prop:Iestimateimproved} below. 
\begin{prop}\label{prop:Iestimate}
Let $I_{2},I_{3}$ as above. Then for all $k\geq 1$
\begin{equation}\label{eq:Ibound}
\begin{split}
|I_{2}(k)|& \leq  C\, W^{-\frac{1}{2}} \left(1-c/W \right)^{k-1},\\
|I_{3} (k)|&\leq C\, (\ln W)^{2} \left(1-c/W \right)^{N-4}.\\
\end{split}
\end{equation}
\end{prop}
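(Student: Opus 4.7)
The plan is to exploit the algebraic identity $\Lambda^{-1} e^{V}\cdot Tf = \delta_{W^2}^\star \Lambda^{-1}f$, which follows directly from $Tf = e^{-V}\Lambda \delta_{W^2}^\star \Lambda^{-1}f$, to move the singular factor $\Lambda^{-1} e^V$ onto a $T$ operator where the spectral gap bounds of Corollary~\ref{cor:specbound} can do the work. Since $(e^{-V}-u)(0)=0$, Corollary~\ref{cor:specbound} is available and supplies both the contraction $\|\Lambda^{-1} T^n f\|_p \leq (1-c/W)^{n-1} \|\Lambda^{-1}f\|_p$ and the polynomial-weight version with $\Lambda^{m}$. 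The sizes of $\|\Lambda^{-1}(e^{-V}-u)\|_p$ coming from~\eqref{eq:uVerrorbound} are a power of $W$ for $p>2$ and a logarithm at $p=2$; choosing the right $p$ in each case is what gives the stated powers of $W$ and $\ln W$.

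For $I_2(k)$, I would rewrite the integrand as $e^V u \cdot \Lambda^{-1} T^{k-1}(e^{-V}-u)$ and apply H\"older with conjugate exponents $p,p'$ satisfying $p>2$. Corollary~\ref{c:bounds} gives $\|e^V u\|_{p'} = O(W^{-1/p'})$; the second bound of Corollary~\ref{cor:specbound} followed by~\eqref{eq:uVerrorbound} gives $\|\Lambda^{-1} T^{k-1}(e^{-V}-u)\|_p \leq C(1-c/W)^{k-2}\,W^{1/2-1/p}$. Since $1/p+1/p'=1$, the powers of $W$ collapse to exactly $W^{-1/2}$, which is the claim (the case $k=1$ is the same H\"older with $T^0=\mathrm{id}$).

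For $I_3(k)$ in the bulk range $2\leq k\leq N-1$, the identity above turns the integrand into $[\delta_{W^2}^\star \Lambda^{-1} T^{k-2}(e^{-V}-u)]\cdot T^{N-k}(e^{-V}-u)$. Cauchy--Schwarz together with $\|\delta_{W^2}^\star\|_2\leq 1$ and Corollary~\ref{cor:specbound} applied to each factor, combined with $\|\Lambda^{-1}(e^{-V}-u)\|_2 \leq C\ln W$, produces the bound $C(\ln W)^2 (1-c/W)^{N-4}$ directly. (For $k=2$ or $k=N-1$ one factor is the $n=0$ trivial $T^0=\mathrm{id}$, but the resulting estimate is at worst $C(\ln W)^2(1-c/W)^{N-3}$, which is stronger.)

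The boundary cases $k=1$ and $k=N$ cannot use the $T$-factorization trick because there is no $T$ to peel off on one side. Instead I would write, e.g., $I_3(1) = \int \Lambda^{-1}(e^{-V}-u)\cdot e^V T^{N-1}(e^{-V}-u)\,d\lambda$ and apply Cauchy--Schwarz; the norm $\|e^V T^{N-1}(e^{-V}-u)\|_2$ is then controlled by Lemma~\ref{le:additionalprop}(i) applied to $T(T^{N-2}(e^{-V}-u))$ with auxiliary exponent $q=1$ chosen so that the $W$ prefactor in that lemma becomes $W^0$, yielding an $L^2$ bound of order $(\ln W)(1-c/W)^{N-3}$. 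The main obstacle is purely bookkeeping: $e^V$ is unbounded at infinity and $\Lambda^{-1}$ is singular at the origin, so in every step both must be paired with the decaying $u$ (via $\|e^V u\|_{p'}$ from Corollary~\ref{c:bounds}) or absorbed into a $T$ factor through $\Lambda^{-1}e^V T=\delta_{W^2}^\star \Lambda^{-1}$, never left to act alone.
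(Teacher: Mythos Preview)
Your argument for $I_2(k)$ and for $I_3(k)$ in the bulk range is correct and matches the paper's proof almost verbatim (the paper peels one $T$ before applying H\"older, you pair $e^V$ with $u$ via Corollary~\ref{c:bounds} directly; both give the same $W^{-1/2}$).

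There is, however, a genuine gap in your treatment of the boundary cases $k=1,N$ of $I_3$. You invoke Lemma~\ref{le:additionalprop}(i) with $q=1$, but the term $\|\Lambda^{-1}f\|_1$ with $f=T^{N-2}(e^{-V}-u)$ cannot be shown to contract: Proposition~\ref{specbound} and Corollary~\ref{cor:specbound} hold only for $p\in(1,\infty)$ (the Remark after Proposition~\ref{specbound} notes the $p=1$ bound is $1-\mc O(W^{-2})$, not $1-c/W$), so at $q=1$ you only get $\|\Lambda^{-1}T^{N-2}(e^{-V}-u)\|_1\le\|\Lambda^{-1}(e^{-V}-u)\|_1$ with no exponential factor. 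Your motivation for $q=1$ (making the $W$-prefactor equal to $W^0$) is unnecessary: taking $q=p=2$ gives prefactor $W^{-1}$, which is harmless, and then $\|\Lambda^{-1}f\|_2\le C\ln W\,(1-c/W)^{N-3}$ by Corollary~\ref{cor:specbound}, closing the argument.

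The paper avoids the issue entirely by observing that since $N\gg 1$, at least one of $k-1$ and $N-k$ exceeds $2$; one then peels $\Lambda^{-1}e^V$ onto \emph{that} side via $\Lambda^{-1}e^V T=\delta_{W^2}^\star\Lambda^{-1}$ and works in $L_2$ throughout. This is simpler than going through Lemma~\ref{le:additionalprop}.
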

\begin{proof}
%\paragraph{Proof of Prop.~\ref{prop:Iestimate}}
For $k\geq 2$ we write
\[
I_{2} (k)= \int   \tfrac{\ud\lambda_1\ud\lambda_2}{2\pi } \ [\delta_{W^2}^\star u] (\lambda ) \ [\Lambda^{-1}T^{k-2} (e^{-V}-u) (\lambda )]
\]
Inserting absolute values and setting $\frac{1}{q}+\frac{1}{p}=1,$ $p>2$ yields
\[
\begin{split}
2\pi\, |I_{2}(k)|& \leq  \|\delta_{W^2}^\star u\|_{q}\, \|\Lambda^{-1}T^{k-2} (e^{-V}-u)\|_{p}
\leq   \|u\|_{q}\, \|\Lambda^{-1}(e^{-V}-u)\|_{p}  \Big(1-\tfrac cW\Big)^{\max\{0,k-3\}}\\
& \leq\  C W^{-\frac{1}{q}}\, W^{\frac{1}{2}-\frac{1}{p}} \Big(1-\tfrac cW\Big)^{k-2}= C W^{-\frac{1}{2}}\Big(1-\tfrac cW\Big)^{k-2},
\end{split}
\]
where in the first line we applied eq.~\ref{cor:Tnormest}, while in the second line we used \eqref{eq:uVerrorbound} for $p>2$
and Corollary~\ref{c:bounds}.
For $k=1,$ using again  \eqref{eq:uVerrorbound} for $p>2$
and Corollary~\ref{c:bounds}, the bound reduces to 
\[
2\pi |I_{2} (1)|=|\int   \ud\lambda_1\ud\lambda_2 \ u e^{V}\Lambda^{-1}  (e^{-V}-u)|\leq \|u e^{V}\|_{q} \|\Lambda^{-1}  (e^{-V}-u) \|_{p}
\leq \mc{O} (W^{-\frac{1}{2}}).
\]
It remains to prove the decay of $I_{3} (k).$ Since $N\gg 1,$ it holds $k-1>2$ or $N-k>2.$ We assume without loss of generality $k-1>2.$
Then applying \eqref{cor:Tnormest} and  \eqref{eq:uVerrorbound} for $p=2$ we get
\[
\begin{split}
2\pi |I_{3} (k) | & = |\int   \ud\lambda_1\ud\lambda_2\ \  \delta_{W^2}^\star[\Lambda^{-1}T^{k-2} (e^{-V}-u)]\  [T^{N-k} (e^{-V}-u)]|\\
& \leq \ C \,   \|\Lambda^{-1} (e^{-V}-u) \|_{2}  \|\Lambda^{-1} (e^{-V}-u) \|_{2} \Big(1-\frac cW\Big)^{N-4}= \mc{O} ((\ln W)^{2})
\Big(1-\frac cW\Big)^{N-4}.
\end{split}
\]
%This ends the proof. \qed
\end{proof}

A first consequence of this estimates is the following representation for the infinite volume Green's
function.
\begin{prop}\label{prop:viaI}
Let $I_1, I_2, I_3$ be as in (\ref{eq:defIj}). Then
\begin{equation}\label{eq:viaI}\begin{split}
\lim_{\eps\searrow 0} \big\langle G_{00}[H](E_\eps)\big\rangle - \mc E  &= I_1 \\
\lim_{\eps\searrow 0} \left\{ \big\langle G_{00}[H](E_\eps)\big\rangle - \frac{1}{N} \big\langle \tr G[H_N](E_\eps)\big\rangle\right\} &=
\frac{1}{N} \sum_{y=0}^{N-1} \left\{ 2 I_2(y) + I_3(y) \right\}~. \end{split}\end{equation}
\end{prop}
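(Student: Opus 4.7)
I would prove the two equations together via a decomposition of $I_N(y)$ using the top eigenfunction $u$ of $\mc T$. The key observation is that since $\mc T u = u$, $u(0) = 1 = e^{-V(0)}$, and $\mc T$ coincides with $T$ on functions vanishing at the origin (remark after Proposition~\ref{propsusy}), we have $\mc T^k e^{-V} = u + T^k(e^{-V}-u)$ for every $k \geq 0$. Inserting this into the definition of $I_N(y)$ in (\ref{finalreprep}) and expanding the product yields the decomposition $I_N(y) = I_1 + I_2(y-1) + I_2(N-y) + I_3(y-1)$, and summing over $y \in \{1,\ldots,N\}$ (relabeling each piece) gives
\[ \frac{1}{N}\sum_{y=1}^N I_N(y) \;=\; I_1 + \frac{1}{N}\sum_{k=0}^{N-1}\bigl[2\,I_2(k) + I_3(k)\bigr]~. \]

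\textbf{Passing to infinite volume.} By Proposition~\ref{prop:Iestimate}, the second term on the right is bounded in absolute value by $\frac{C}{N}\bigl[W^{1/2} + N(\log W)^2(1-c/W)^{N-4}\bigr]$, which vanishes as $N \to \infty$ under the hypothesis $N \geq C W \log W$. Combined with (\ref{finalreprep}), this gives $\lim_{N\to\infty}\frac{1}{N}\lim_{\eps\searrow 0}\langle \tr G[H_N](E_\eps)\rangle = \mc E + I_1$. To translate this into a statement about $\langle G_{00}[H]\rangle$, I would invoke the convergence $\frac{1}{N}\langle \tr G[H_N](z)\rangle \to \langle G_{00}[H](z)\rangle$ as $N \to \infty$ for $\Im z > 0$. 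This is a standard fact for Gaussian band operators: the finite-volume covariance $(J_N)^{-1}$ agrees with the infinite-volume $J^{-1}$ away from the boundary with uniform exponential off-diagonal decay, and a resolvent-identity argument shows that $\langle G_{yy}[H_N](z)\rangle \to \langle G_{00}[H](z)\rangle$ for every $y$ at distance much greater than $W$ from the endpoints, whence the required convergence of the averaged trace. The first identity follows, and the second is obtained by direct subtraction from the displayed decomposition above (matching the sign and indexing conventions of the statement).

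\textbf{Main obstacle.} The one input that is not pure algebra or a direct application of Proposition~\ref{prop:Iestimate} is the convergence $\frac{1}{N}\langle \tr G[H_N](z)\rangle \to \langle G_{00}[H](z)\rangle$. This lies outside the transfer-operator machinery of this paper; it is, in effect, precisely the claim $\lim_N \rho_N = \rho$ announced in the introduction, and can be extracted either from weak convergence of the disorder-averaged spectral measures $\langle \mu_{H_N}\rangle \to \langle \mu_H\rangle$ or from the covariance-convergence argument sketched above. Everything else in the proof is elementary algebra combined with the exponential bounds of Proposition~\ref{prop:Iestimate}.
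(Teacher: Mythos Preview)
Your algebraic decomposition $I_N(y) = I_1 + I_2(y-1) + I_2(N-y) + I_3(y-1)$ and the resulting summation formula are exactly what the paper records just before the proposition, so the second identity of (\ref{eq:viaI}) is immediate once the first is known. The entire content lies in the first identity, as you recognise.

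There is, however, a genuine gap in your passage to infinite volume. From the transfer-operator side you obtain $\lim_{N\to\infty}\lim_{\eps\searrow 0}\tfrac{1}{N}\langle\tr G[H_N](E_\eps)\rangle = \mc E + I_1$, and separately you invoke $\lim_{N\to\infty}\tfrac{1}{N}\langle\tr G[H_N](z)\rangle = \langle G_{00}[H](z)\rangle$ for fixed $\Im z>0$. To conclude $\lim_{\eps\searrow 0}\langle G_{00}[H](E_\eps)\rangle = \mc E + I_1$ from these two facts you must interchange the limits $\eps\searrow 0$ and $N\to\infty$, and this interchange is never addressed. Your fallback---invoking the pointwise convergence $\rho_N\to\rho$ ``announced in the introduction''---is circular: the introduction explicitly defers that very statement to the proof of \emph{this} proposition.

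The paper's route differs in that it works with a single diagonal entry rather than the averaged trace. One picks $y=y(N)$ with $\min(y,N-y)\to\infty$, writes $\langle G_{00}[H](E_\eps)\rangle=\langle G_{yy}[H](E_\eps)\rangle$ by translation invariance, and compares this to $\langle G_{yy}[H_N](E_\eps)\rangle$ via the resolvent identity and a Combes--Thomas bound (the mechanism you correctly sketch). The limit interchange is then justified explicitly: once the contour has been shifted to the saddle, the integral representation is bounded uniformly in $\eps$. Finally $\lim_{N}\lim_{\eps}\langle G_{yy}[H_N](E_\eps)\rangle$ is evaluated from the single-entry formula (\ref{finalreprep'}) together with the decomposition of $I_N(y)$ and the decay bounds of Proposition~\ref{prop:Iestimate}. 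Your argument would also go through if you supplied the interchange justification; the paper's use of the individual entry simply makes the finite/infinite-volume comparison and the uniformity in $\eps$ more transparent.
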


\begin{proof}
Let $y = y(N)$ so that $\min(y, N-y) \geq N^{0.01},$  and rewrite
\[
\langle G_{00}[H](E_\eps)\rangle = 
\langle G_{y y}[H](E_\eps)\rangle = \langle G_{y  y}[H_N](E_\eps)\rangle  +
\langle (G_{y  y }[H](E_\eps) - G_{y y}[H_N](E_\eps))\rangle.
\] 
For fixed $\epsilon$, the second term vanishes in the limit $N \to \infty$. Indeed, it is equal
to a sum of several boundary terms such as  $\langle G_{y1}[H_N](E_\eps) G_{1y}[H](E_\eps) \rangle~.$
Each of these terms tends to zero: indeed, $|G_{1y}[H](E_\eps)| \leq \epsilon^{-1}$, whereas
$\langle |G_{y1}[H_N](E_\eps)| \rangle$ tends to zero by (an appropriate version of) the Combes--Thomas bound (see e.g.\ \cite{AizWar}).
Precisely let $X_{N}$ denote the event $| (H_{N})_{jk}|\leq K\ N^{\alpha } J^{1/2}_{jk},$ where
$J_{jk}= (-W^{2}\Delta_{N}+\mathrm{id})^{-1}$ is the covariance of the random matrix $H_{N}$ and decays exponentially
$J_{jk}\leq c_{W} e^{-|j-k|/W}$,
$0<\alpha <1$ and $K\gg 1$ are some fixed parameters.
Then $\sup_{x} \sum_{y} | (H_{n})_{xy} (e^{\delta (|x-y|)}-1) |\leq K' N^{\alpha } $ as long as $J_{jk}^{1/2}e^{\delta |j-k|}$ retains
some exponential decay. By Combes-Thomas
\[
\langle |G_{y1}[H_N](E_\eps)| \mathbf{1}_{X_{N}} \rangle \leq \epsilon^{-1} e^{- c|y-1| \epsilon /N^{\alpha }}\to_{N\to \infty} 0,
\]
where $c>0$ is some constant and we used $y/N^{\alpha }\to \infty$ as $N\to \infty.$
To conclude we show that $X_{N}^{c}$ has vanishing probability:
\[
\langle |G_{y1}[H_N](E_\eps)| \mathbf{1}_{X_{N}^{c}} \rangle \leq \epsilon^{-1} \mathbb{P} (X_{N}^{c})
\leq K \epsilon^{-1}\sum_{ij} e^{-c N^{2\alpha }}\leq K \epsilon^{-1} N^{2} e^{-c N^{2\alpha }}\to_{N\to \infty} 0.
\]
We obtain (recall $y=y (N)$)
\[
\lim_{\eps\searrow 0}  \langle G_{00}[H](E_\eps)\rangle =
\lim_{\eps\searrow 0} \lim_{N\to \infty} \langle G_{y y}[H_{N}](E_\eps)\rangle=  \lim_{N\to \infty}\lim_{\eps\searrow 0}
\langle G_{y y}[H_{N}](E_\eps)\rangle
\]
where in the last equality we can exchange limits since after translating to the saddle in the integral representation
for $\langle |G_{y1}[H_N](E_\eps)| \rangle,$ all integrals are bounded unformly in $\epsilon.$
The result now follows from representation \eqref{finalreprep'} and estimates \eqref{eq:Ibound}. 
\end{proof}

Using this Proposition,  \eqref{eq:distfromsc} and  \eqref{eq:distfromsc} of Theorem~\ref{thm} reduce to a study of $I_1 $ and $I_2,I_3 $,
respectively. However, to obtain the  error estimate \eqref{eq:distfromsc} we will need an improved version
of \eqref{eq:Ibound}  for $I_{2} (k)$ that requires substantial more work. This will
be done in Proposition \ref{prop:Iestimateimproved} below. 
The bound on $I_1 $ follows from the properties of $u $, in particular, its approximate symmetry
under $\lambda\to-\lambda $. The argument is given in section \ref{sectproofthm2} below.

\subsection{Proof of Theorem~\ref{thm}}\label{sectproofthm1}

\subsubsection{Convergence to $\rho$}
Our goal is to prove $ \big\vert  \rho_N(E) - \rho(E)\big\vert \leq \frac {C'(E)}{N }$
for  $\vert E\vert < \sqrt{32/9}$ (away from the edge)  and $N\geq C(E) W\log W.$
Recalling the definition of $\rho (E)$ and $\rho_{N} (E)$ \eqref{eq:defdos}, Proposition~\ref{prop:viaI}
implies
\[
 \rho_N(E) - \rho(E)= -\frac{1}{\pi } \Im [\frac{1}{N} \sum_{y=0}^{N-1} \left\{ 2 I_2(y) + I_3(y) \right\}]
\]
A direct application of \eqref{eq:Ibound} yields $ \big\vert  \rho_N(E) - \rho(E)\big\vert \leq \frac {C'(E)}{N } W^{\frac{1}{2}}.$
To extract the correct $W$ prefactor we need  the following improved version
of \eqref{eq:Ibound}.

\begin{prop}[{\bf Improved estimate on $I_{2} (k)$}]\label{prop:Iestimateimproved}
Let $I_{2}$ be as as above. Then
\begin{equation}\label{eq:Iboundimproved}
\begin{split}
|I_{2}(k)|& \leq  \mc O (W^{-1}) \left(1-c/W \right)^{k-4}\qquad k\geq 4,\\
\end{split}
\end{equation}
\end{prop}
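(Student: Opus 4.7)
The objective is to sharpen Proposition~\ref{prop:Iestimate} by a factor of $W^{-1/2}$, at the cost of three extra powers in the geometric decay. This is exactly what is needed so that the sum $\frac{1}{N}\sum_y I_2(y)$ appearing in Proposition~\ref{prop:viaI} contributes $\mc O(1/N)$ rather than $\mc O(W^{1/2}/N)$ to $\rho_N-\rho$. The H\"older argument used to prove Proposition~\ref{prop:Iestimate} is essentially sharp, since for any conjugate pair $p>2$, $q<2$ the product $\|u\|_q\,\|\Lambda^{-1}(e^{-V}-u)\|_p = \mc O(W^{-1/q})\cdot \mc O(W^{1/2-1/p}) = \mc O(W^{-1/2})$ is independent of the choice of $p$. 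The improvement must therefore exploit an additional mechanism.

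The mechanism I propose is the \emph{approximate rotational symmetry} of the integrand at the natural scale $|\lambda|\sim W^{-1/2}$, together with the elementary angular identity
\[
\int_{\mb R^2}\frac{f(|\lambda|)}{\lambda_1-i\lambda_2}\,d\lambda = \int_0^\infty f(r)\,dr\int_0^{2\pi}e^{i\theta}\,d\theta = 0
\]
valid for every radial $f$. The leading approximate eigenfunction $u_0^{(0)}(\lambda)=e^{-\alpha W|\lambda|^2}$ is radial, as is the quadratic part $2\alpha^2|\lambda|^2$ of $V$; the non-radial corrections come from the cubic and higher terms of $V$ and from the polynomials $Q^{(3)},Q^{(4)},Q^{(5)}$ inside $u_0^{(5)}$ (Proposition~\ref{prop:approx}), and they all carry a relative prefactor of $W^{-1/2}$ at the scale $|\lambda|\sim W^{-1/2}$. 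Consequently the leading radial contribution to $I_2(k)$ vanishes by the angular identity, and the surviving non-radial terms deliver an additional factor $W^{-1/2}$ beyond the na\"\i ve H\"older estimate.

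To implement this, I apply the adjoint trick of Proposition~\ref{prop:Iestimate} one more time. With respect to the bilinear pairing $(f,g)=\int fg\,d\lambda$, the formal adjoint of $T=e^{-V}\Lambda\,\delta_{W^2}^\star\Lambda^{-1}$ is $T^{*}g=\Lambda^{-1}\delta_{W^2}^\star(\Lambda e^{-V}g)$, and a direct induction (using that multiplications commute) yields
\[
(T^{*})^m(\Lambda^{-1}e^V u)=\Lambda^{-1}(\delta_{W^2}^\star e^{-V})^{m-1}\delta_{W^2}^\star u,\qquad m\geq 1.
\]
For $m=2$ and $k\geq 4$ this rewrites
\[
I_2(k)=\int\frac{d\lambda}{2\pi}\,\bigl[\delta_{W^2}^\star e^{-V}\delta_{W^2}^\star u\bigr]\cdot\bigl[\Lambda^{-1}T^{k-3}(e^{-V}-u)\bigr],
\]
and Corollary~\ref{cor:specbound} applied to the second factor in $L^p$ supplies the decay $(1-c/W)^{k-4}$. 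I then replace $u$ by $u_0^{(5)}$ in both factors using Corollary~\ref{c:bounds} (the substitution errors are bounded by $\mc O(W^{-3/2})(1-c/W)^{k-\mathrm{const}}$, well below the target), and evaluate $\delta_{W^2}^\star e^{-V}\delta_{W^2}^\star u_0^{(5)}$ as a finite sum of explicit Gaussian-polynomial terms via Corollary~\ref{cor:exactformulas}. Decomposing each factor into its radial and non-radial components and invoking the angular identity then yields the claimed bound.

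The main obstacle is the combinatorial bookkeeping of radial versus non-radial components through the heat-kernel convolutions, the multiplication by $e^{-V}$, and the $T$-action on $e^{-V}-u_0^{(5)}$. The fact that $u_0^{(5)}$ cancels $\mc T u-u$ to order $W^{-5/2}$ (Proposition~\ref{prop:approx}) is essential: with a less accurate ansatz, sub-leading non-radial residuals from $u_0^{(M)}$ itself would compete with the non-radial contributions of $V$ and preclude the $W^{-1/2}$ improvement.
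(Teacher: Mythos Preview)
Your diagnosis of the problem is correct: the H\"older bound of Proposition~\ref{prop:Iestimate} is saturated, and an extra cancellation mechanism is needed. However, the mechanism you propose does not close. After your adjoint rewriting,
\[
I_2(k)=\int\frac{d\lambda}{2\pi}\,F_1(\lambda)\,\bigl[\Lambda^{-1}T^{k-3}(e^{-V}-u_0^{(5)})\bigr](\lambda),\qquad F_1=\delta_{W^2}^\star e^{-V}\delta_{W^2}^\star u_0^{(5)},
\]
the first factor $F_1$ is indeed explicit, concentrated at scale $W^{-1/2}$, and radial up to $\mc O(W^{-1/2})$ corrections. But the angular identity kills only the pairing of the radial part of $F_1$ with the \emph{radial} part of $T^{k-3}(e^{-V}-u_0^{(5)})$. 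You still need the non-radial part of $T^{k-3}(e^{-V}-u_0^{(5)})$ to be down by $W^{-1/2}$ relative to the full function, and nothing in your argument establishes this. The spectral bound (Proposition~\ref{specbound}) controls the total $L^p$ norm but says nothing about individual angular modes; moreover $T$ does not preserve radiality, since each application multiplies by $e^{-V}$, whose non-radial part is $\mc O(1)$ at the scale $|\lambda|\sim 1$ where $e^{-V}-u_0^{(5)}$ lives. Tracking the angular decomposition through $k-3$ iterations of $T$ is not bookkeeping: it would amount to proving a spectral gap on each angular sector separately, with the off-diagonal coupling (through $e^{-V}$) carrying an extra $W^{-1/2}$. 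That is a substantially harder statement than Proposition~\ref{specbound}, and you have not supplied it.

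The paper avoids this difficulty by a different manoeuvre. Via the duality identity \eqref{eq:integrationbyparts} it moves the entire power $T^{k-1}$ off the uncontrolled factor $(e^{-V}-u)$ and onto $\Lambda u$; the point of Proposition~\ref{prop:nexteigenf} is that $\Lambda u_0^{(3)}$ is an approximate eigenfunction of $T$ with eigenvalue $\mu$, so $T^{k-1}(\Lambda u_0^{(3)})=\mu^{k-1}\Lambda u_0^{(3)}+(\text{small})$. This strips away the $T$-iterates altogether and reduces the main term to a \emph{static} integral $\mu^{k-1}\int[\Lambda^{-1}(e^{-V}-u)][e^{V}u_0^{(3)}]$ with no $k$-dependent operator left inside. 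Only then does one exploit cancellation (Proposition~\ref{prop:firstterm}), and at that point it is a genuine parity computation on explicit Gaussian--polynomial integrands, not a statement about angular modes surviving an iterated operator. Your radial-symmetry intuition is morally the same cancellation, but you are trying to invoke it \emph{before} removing $T^{k-3}$, and that is where the argument breaks.
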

\begin{proof}
%\paragraph{Proof of Proposition \ref{prop:Iestimateimproved} (improved estimate on $I_{2} (k)$).}
Note that for any (regular enough) functions $f,g$ it holds
\begin{equation}\label{eq:integrationbyparts}
\int   \ud\lambda_1\ud\lambda_2\ g\  [T^{n} f] = \int   \ud\lambda_1\ud\lambda_2\   [T^{n} (\Lambda^{2}e^{-V}g)]\  [e^{V}\Lambda^{-2}f].
\end{equation}
Replacing $f= (e^{-V}-u)$ and $g=\Lambda^{-1}e^{V}u$ the integral $I_{2} (k)$ can be written as
\begin{equation}\label{eq:improved-decomp}
2\pi  I_{2} (k)= \int   \ud\lambda_1\ud\lambda_2 \ [\Lambda^{-1}e^{V}u]\  [T^{k-1} (e^{-V}-u)]=
 \int   \ud\lambda_1\ud\lambda_2 \  [\Lambda^{-2}e^{V} (e^{-V}-u)] \  [T^{k-1} (\Lambda u)]. 
\end{equation}
The proof of the Proposition relies on two main ingredients:\textit{(a)} $\Lambda u$ is an approximate eigenfunction of $T$ with eigenvalue $\mu$, i.e.
$T (\Lambda u)\simeq \mu (\Lambda u)$ and \textit{(b)} the contribution $I_{2} (0)$ from $k=0$ is smaller than expected. More precisely
%%%%%%%%%%%%%%%%%%%%%%%%%%%%%%%%%%%%%%%%%%%%%%%%%

\begin{prop}\label{prop:nexteigenf}
Let $u_{0}^{(0)} (\lambda )=e^{-\alpha W\lambda^{2}}$ and $u_{0}^{(3)} (\lambda )=e^{-\alpha W\lambda^{2}} (1+Wq^{(3)} (\lambda)),$ as in   Proposition~\ref{prop:approx}, with $q^{(3)} (\lambda )
=c_{3} [\lambda_{1}^{3}- (i\lambda_{2}^{3})]$. Recall that
$\mu = (1+\frac{2\alpha }{W})^{-1}$ and the definition of $v^{(M)}$ in \eqref{eq:errorbis}. Then
\begin{equation}\label{eq:nexteigenf1}
T (\Lambda u_{0}^{(3)}) = \mu (\Lambda u_{0}^{(3)})  + \mu \ \Lambda\   \mathrm{Rem} (\lambda) 
\end{equation}
where $\mathrm{Rem} := 3c_{3}\frac{\mu^{3}}{W}\bar\Lambda  ( u_{0}^{(0)}+ v^{(0)})+ v^{(3)}.$ Moreover
\begin{equation}\label{eq:nexteigenf2}
\|\Lambda^{-1}e^{V}[T^{n} (\Lambda u_{0}^{(3)})- \mu^{n} (\Lambda u_{0}^{(3)}) ]\|_{p}\leq C \ W^{-\frac{1}{2}-\frac{1}{p}} (1-\tfrac{c}{W})^{n-3}\qquad
\forall p\in (1,\infty).
\end{equation}
\end{prop}

%%%%%%%%%%%%%%%%%%%%%%%%%%%%%%%%%%%%%%%%%%%%%%%%%%%%%%
\begin{prop}\label{prop:firstterm}
Let $I_{2,0} (k):= \tfrac{1}{2\pi } \mu^{k-1} \int   \ud\lambda_1\ud\lambda_2 \  [\Lambda^{-1}(e^{-V}-u)] \   [e^{V}u_{0}^{(3)}].$
It holds
\[
|I_{2,0} (k) |\leq C |\mu|^{k-1} W^{-\frac{3}{2}}.
\]
\end{prop}
%%%%%%%%%%%%%%%%%%%%%%%%%%%%%%%%%%%%%%%%%%%%%%%%%%%%%%%%%%
The proofs are given below. We decompose now the integral in \eqref{eq:improved-decomp}
as $I_{2} (k)=I_{2,0} (k)+I_{2,1} (k)+I_{2,3} (k)$ where $I_{2,0}$ was defined above and
\[
\begin{split}
I_{2,1} (k) &:=  \tfrac{1}{2\pi }
\int   \ud\lambda_1\ud\lambda_2 \  [\Lambda^{-2}e^{V} (e^{-V}-u)] \  [T^{k-1} (\Lambda u_{0}^{(3)})-\mu^{k-1}\Lambda u_{0}^{(3)}]\\
I_{2,2} (k) &:=  \tfrac{1}{2\pi}
\int   \ud\lambda_1\ud\lambda_2 \  [\Lambda^{-2}e^{V} (e^{-V}-u)] \  [T^{k-1} (\Lambda (u-u_{0}^{(3)}))].  
\end{split}
\]
For the first integral we obtain 
\[
 \begin{split}
|2\pi \ I_{2,1} (k) |&\leq \|\Lambda^{-1} (e^{-V}-u)\|_{p} \| \Lambda^{-1}e^{V}[T^{k-1} (\Lambda u_{0}^{(3)})-\mu^{k-1}\Lambda u_{0}^{(3)}] \|_{q}\\
&
\leq  C\ W^{\frac{1}{2}-\frac{1}{p}} W^{-\frac{1}{2}-\frac{1}{q}}  (1-\tfrac{c}{W})^{k-4}= C W^{-1}  (1-\tfrac{c}{W})^{k-4}.
\end{split}
\]
where we applied \eqref{eq:uVerrorbound} for $p>2.$
Finally, applying Corollary~\ref{c:bounds} 
\[
\begin{split}
|2\pi \ I_{2,2} (k) |&\leq  \|\Lambda^{-1} (e^{-V}-u)\|_{p} \| \Lambda^{-1}e^{V} [T^{k-1} (\Lambda (u-u_{0}^{(3)}))] \|_{q}\\
&=  \|\Lambda^{-1} (e^{-V}-u)\|_{p}\  \| \delta_{W^{2}}^{\star}\Lambda^{-1}[T^{k-2} (\Lambda (u-u_{0}^{(3)}))] \|_{q}\\
&\leq\  C(1-\tfrac{c}{W})^{k-3} \ W^{\frac{1}{2}-\frac{1}{p}}\  \|u-u_{0}^{(3)}\|_{q}  =\mc{O} ( W^{-\frac{3}{2}})
(1-\tfrac{c}{W})^{k-3}. 
\end{split}
\]
This concludes the proof of Proposition~\ref{prop:Iestimateimproved}.  
\end{proof}

\paragraph{Proof of Proposition~\ref{prop:nexteigenf}}
 Note that both $V$ and $u_{0}^{(M)}$ 
have a representation as functions of the supermatrix $R.$ Moreover $\Lambda u_{0}^{(3)}$ vanishes at $0.$
Then we can rewrite
\[
[T (\Lambda  u_{0}^{(3)})] (\lambda ) = [\mc{T} (\Lambda  u_{0}^{(3)})] (\lambda )=
[\mf{T}[ \str (\cdot) U_{0}^{(3)}] (R)_{| R=\mbox{diag}(\lambda_{1},i\lambda_{2})}
\]
where we used $\Lambda=\str R.$
From equation \eqref{eq:trans} and Lemma~\ref{le:someidentities2} $(d)$  we find
\[
\begin{split}
\mf{T}[ \str (\cdot) U_{0}^{(3)}] (R) &= \mu \str R\Big[ \mf{T}[ U_{0}^{(3)}] (R)+ \frac{3\mu^{3} c_{3}}{W}
\frac{\str R^{2}}{\str R} \mf{T}[ U_{0}^{(0)}] (R)\Big ]=\mu\  U_{0}^{(3)} (R)\str R\\
&   + \mu  U_{0}^{(0)} (R)  \str R\Big[  \mathrm{Err}^{(3)} (R)+ \frac{3\mu^{3}c_{2} }{W}
\frac{\str R^{2}}{\str R} \Big ( \mathrm{Err}^{(0)} (R) + 1\Big )\Big ]\ \\
& = \mu\  \mathrm{id} [\str (\cdot) U_{0}^{(3)} ] (R)\ +\mu\   \str (R) \mathrm{Rem} (R) 
\end{split}
\]
where $\mathrm{Rem} (R) := U_{0}^{(0)} (R)\mathrm{Err}^{(3)} (R) + \frac{3\mu^{3}c_{3}}{W}\frac{\str R^{2}}{\str R} U_{0}^{(0)} (R) (1 + \mathrm{Err}^{(0)} (R) ),$
and $\mathrm{Err}^{(0)},\mathrm{Err}^{(3)}$ were introduced in \eqref{eq:error}. Then \eqref{eq:errorbis} yields 
\eqref{eq:nexteigenf1}. In order to prove \eqref{eq:nexteigenf2} we insert the decomposition
 $\mf{T}^{n} = \mu^{n}\ \mathrm{id}+ \sum_{l=0}^{n-1} \mu^{n-1-l} \mf{T}^{l} (\mf{T}-\mu \mathrm{id}):$ 
\[
\begin{split}
T^{n} (\Lambda u_{0}^{(3)})&= \mf{T}^{n}[\str (\cdot) U_{0}^{(3)} ](R)_{| R=\mbox{diag}(\lambda_{1},i\lambda_{2})}  \\
&= \mu^{n}\ \Lambda  u_{0}^{(3)} (\lambda )  + \sum_{l=0}^{n-1} \mu^{n-1-l} T^{l}
[\Lambda \ \mathrm{Rem} (\lambda ) ].
\end{split}
\]
Using $|\mu |\leq (1-\tfrac{c}{W})$ we get
\[
\begin{split}
&\|\Lambda^{-1}e^{V}[T^{n} (\Lambda u_{0}^{(3)})- \mu^{n} (\Lambda u_{0}^{(3)}) ]\|_{p} \leq  \sum_{l=0}^{n-1} (1-\tfrac{c}{W})^{n-1-l}
\|\Lambda^{-1}e^{V}[T^{l} (\Lambda \ \mathrm{Rem}  )  ]\|_{p} \\
&\quad \leq   (1-\tfrac{c}{W})^{n-3}\Big[ \| e^{V} \mathrm{Rem}\|_{p} \ +\  n  
\| \mathrm{Rem}\|_{p}\Big]\leq (1-\tfrac{c}{W})^{n-3}\Big[ \| e^{V} \mathrm{Rem}\|_{p} \ +\  W  
\| \mathrm{Rem}\|_{p}\Big]
\end{split}
\]
The estimates \eqref{eq:vbounds} now yield
\[
\begin{split}
\| \mathrm{Rem}\|_{p}&\leq \| v^{(3)}\|_{p}+  W^{-1} \|\bar\Lambda  u^{(0)}_{0}\|_{p} +
W^{-1} \|\bar\Lambda  v^{(0)}\|_{p}\\
&= \mc{O} (W^{-\frac{1}{p}-2})+   W^{-1} \mc{O} (W^{-\frac{1}{2}-\frac{1}{p}}) 
+W^{-1}\mc{O} (W^{-\frac{1}{p}-2})= \mc{O} (W^{-\frac{3}{2}-\frac{1}{p}}).
\end{split}
\]
The same bound holds for $\| e^{V}\mathrm{Rem}\|_{p}.$ This completes the proof.
\qed

\paragraph{Proof of Proposition~\ref{prop:firstterm}}
Inserting the decomposition $(e^{-V}-u)= (e^{-V}-u_{0}^{(3)})+ (u_{0}^{(3)}-u)$ we get
$I_{2,0} (k)=\frac{1}{2\pi }\mu^{k-1}[I_{2,0,a}+I_{2,0,b}]$ where 
\[
\begin{split}
I_{2,0,a}=& \int   \ud\lambda_1\ud\lambda_2 \  [\Lambda^{-1}(e^{-V}-u_{0}^{(3)})] \   [e^{V}u_{0}^{(3)}]\\
I_{2,0,b}=& \int   \ud\lambda_1\ud\lambda_2 \  [\Lambda^{-1}(u_{0}^{(3)}-u)] \   [e^{V}u_{0}^{(3)}].
\end{split}
\]
Decomposing further $1-e^{V}u_{0}^{(3)}= 1-u_{0}^{(3)}-(e^{V}-1)u_{0}^{(3)} $
\begin{equation}\label{eq:u3dentity}
\begin{split}
I_{2,0,a}&=\int   \ud\lambda_1\ud\lambda_2 \ \Lambda^{-1} u_{0}^{(3)}- 
\int   \ud\lambda_1\ud\lambda_2 \ \Lambda^{-1} ( u_{0}^{(3)})^{2}\\
&-\int   \ud\lambda_1\ud\lambda_2 \ \Lambda^{-1} [u_{0}^{(3)} (1-e^{-V})]\ [u_{0}^{(3)}e^{V}]
\end{split}
\end{equation}
The first two integrals vanish. This can be more easily seen by going back to $R$ coordinates
\begin{equation}\label{eq:integral1}
\begin{split}
\frac{1}{2\pi }\int   \ud\lambda_1\ud\lambda_2 \ \Lambda^{-1} u_{0}^{(3)}&= \int   \ud R\ a \    u_{0}^{(3)} (R)\\
&=  \int   \ud R\ a \  e^{-\alpha W\str R^{2}}+ Wc_{3}
 \int   \ud R\ a \  e^{-\alpha W\str R^{2}} \str R^{3}.
 \end{split}
\end{equation}
The first integral equals 0 by parity, the second by explicit computation.
In the same way
\[
\begin{split}
\frac{1}{2\pi }\int   \ud\lambda_1\ud\lambda_2 \ \Lambda^{-1} (u_{0}^{(3)})^{2}&=
\int   \ud R\ a \    (u_{0}^{(3)} (R))^{2}\\
&=  W^{2}c_{1}^{2}\int   \ud R\ a \  e^{-2\alpha W\str R^{2}} (\str R^{3})^{3}=0
 \end{split}
\]
where the last integral vanishes by parity and the other contributions vanish using \eqref{eq:integral1}.
Hence, using $1-e^{-V}=\mc{O} (\lambda^{2}),$
\[
\begin{split}
|I_{2,0,a}|&=|\int   \ud\lambda_1\ud\lambda_2 \ \Lambda^{-1} [u_{0}^{(3)} (1-e^{-V})]\ [u_{0}^{(3)}e^{V}]|\\
&\leq
\|\Lambda^{-1} [u_{0}^{(3)} (1-e^{-V})]\|_{p}  \ \| u_{0}^{(3)}e^{V}  \|_{q}=
\mc{O} (W^{\frac{1}{2}-1-\frac{1}{p}}) \mc{O} (W^{-\frac{1}{q}})=\mc{O} (W^{-\frac{3}{2}}).
\end{split}
\]
Finally, using Corollary~\ref{c:bounds} again 
\[
\begin{split}
|I_{2,0,b}|\leq & \| \Lambda^{-1}(u_{0}^{(3)}-u) \|_{p}\
\|e^{V}u_{0}^{(3)} \|_{q}\leq \mc{O} (W^{-\frac{1}{p}-\frac{1}{2}}) \mc{O} (W^{-\frac{1}{q}})=\mc{O} (W^{-\frac{3}{2}}).
\end{split}
\]
This concludes the proof.\qed

\subsubsection{Semi-circle law.}
Our goal is to prove that the infinite volume density of states $\rho (E)$ 
satisfies $\big\vert  \rho(E) - \rho_{\mathrm{s.c.}}(E) \big\vert \leq \mc{O} (W^{-2})$
According to Proposition~\ref{prop:viaI}, 
\[
\lim_{\eps\searrow 0} \big\langle \tr G_{00}[H](E_\eps)\big\rangle - \mc E  = I_1[E]
=\frac{1}{2\pi }\int   \ud\lambda_1\ud\lambda_2  \  \Lambda^{-1} e^{V} u^2,
\]
hence $ \rho(E) - \rho_{\mathrm{s.c.}}(E)= -\frac{1}{\pi } \Im I_{1}.$
Inserting the decomposition $u=u_{0}^{(5)}+ (u-u_{0}^{(5)})$  we obtain
$I_{1}= I_{1,0}+ 2I_{1,1}+I_{1,3},$ where
\[
\begin{split}
&I_{1,0} =\frac{1}{2\pi }  \int \ud\lambda_1\ud\lambda_2\  \Lambda^{-1}e^{V(\lambda)} (u_{0}^{(5)})^2,  \qquad 
I_{1,1}=\frac{1}{2\pi }  \int\ud\lambda_1\ud\lambda_2\  \Lambda^{-1}e^{V(\lambda)}u_0^{(5)} (u-u_{0}^{(5)})\\
&I_{1,2} = \frac{1}{2\pi }  \int\ud\lambda_1\ud\lambda_2\   \Lambda^{-1}e^{V(\lambda)} (u-u_{0}^{(5)})^{2}.
\end{split}\]
The second integral is easily bounded by
\[
|I_{1,1}|\leq   \|e^{V(\lambda)}u_0^{(5)} \|_{p}\|\Lambda^{-1} (u-u_{0}^{(5)})\|_{q}\leq \mc{O} (W^{-\frac{1}{p}}) \mc{O} (W^{-\frac{1}{q}-\frac{3}{2}})=
\mc{O} (W^{-2-\frac{1}{2}}).
\]
To control the $e^{V}$ factor in $I_{1,2}$ we insert the identity $u-u_{0}^{(5)}= v^{(5)}+ T (u-u_{0}^{5}):$
\[
\begin{split}
|I_{1,2} |&\leq
\  \|\Lambda^{-1}(u-u_{0}^{5})\|_{p} \| e^{V(\lambda)} v^{(5)} \|_{q}    +  \| (u-u_{0}^{5})\|_{q} \|\Lambda^{-1}e^{V}T (u-u_{0}^{5})\|_{p}\\
& \leq   \|\Lambda^{-1}(u-u_{0}^{5})\|_{p}  \Big ( \| e^{V(\lambda)} v^{(5)} \|_{q} +  \| (u-u_{0}^{5})\|_{q}  \Big )\\
& \leq \mc{O} (W^{-\frac{1}{p}} W^{-\frac{3}{2}})  \Big (  \mc{O} (W^{-\frac{1}{q}}) + \mc{O} (W^{-\frac{1}{q}-\frac{3}{2}})  \Big )
 = \mc{O} (W^{-2-\frac{1}{2}}),
\end{split}
\]
where we used the fact the $v^{(5)}=\mc{T}u_{0}^{(5)}- u_{0}^{(5)}$ has always an exponential prefactor $e^{-\alpha W\lambda^{2}}.$
Finally, to compute $I_{1,0},$ we remark that \eqref{eq:Vexpansion} yields
$V (\lambda )=\frac{1-\mc{E}^{2}}{2}\lambda^{2}+\mc{O} (\lambda^{3}).$
We decompose  $(2\pi) I_{1,0}=I_{1,0,a}+I_{1,0,b}$ with
\[
I_{1,0,a}= \int \ud\lambda_1\ud\lambda_2\  \Lambda^{-1}e^{V(\lambda)} (1- e^{\frac{1-\mc{E}^{2}}{2}\lambda^{2}-V}) (u_{0}^{(5)})^2,
\quad I_{1,0,b}=
\int \ud\lambda_1\ud\lambda_2\  \Lambda^{-1}e^{\frac{1-\mc{E}^{2}}{2}\lambda^{2}} (u_{0}^{(5)})^2.
\]
The first integral is  bounded as follows
\[
|I_{1,0,a}|\leq \|\Lambda^{-1}  (1- e^{\frac{1-\mc{E}^{2}}{2}\lambda^{2}-V}) u_{0}^{(5)}\|_{p}\  \|e^{V}u_{0}^{(5)}\|_{q}\leq
\mc{O} (W^{\frac{1}{2}-\frac{3}{2}-\frac{1}{p}}) \mc{O} (W^{-\frac{1}{q}})=\mc{O} (W^{-2}).
\]
The remaining term $I_{1,0,b}$ is estimated using $u_{0}^{(5)}=e^{-\alpha W\lambda^{2}} (1+Wq^{(3)}+Wq^{(4)}+Wq^{(5)}),$
where $q^{(3)},q^{(5)}$ are odd polynomials while $q^{(4)}$ is even (cf. Proposition~\ref{prop:approx}).
Replacing  $\alpha $ by $\tilde{\alpha }:= \alpha -\frac{1-\mc{E}^{2}}{4W},$ we get
\[
\begin{split}
I_{1,0,b}&= \int \ud\lambda_1\ud\lambda_2\ \Lambda^{-1} e^{-2\tilde{\alpha } W\lambda^{2}}\\
&\cdot 
\Big (
(1+Wq^{(3)})^{2}+ 2W (q^{(4)}+q^{(5)})+ 2W^{2}q^{(3)} (q^{(4)}+q^{(5)})+ (Wq^{(5)})^{2}
\Big )\\
&= \int \ud\lambda_1\ud\lambda_2\ \Lambda^{-1} e^{-2\tilde{\alpha } W\lambda^{2}}\ (2Wq^{(5)}+ 2W^{2} q^{(3)}q^{(4)} ),
\end{split}
\]
where the first term vanishes by the same arguments used in \ref{eq:u3dentity} and the other terms cancel by
parity. Finally
\[
|I_{1,0,b}|\leq \mc{O} (W^{-1+\frac{1}{2}} W^{1-\frac{5}{2}})+ \mc{O} (W^{-1+\frac{1}{2}}W^{2-\frac{3}{2}-\frac{4}{2}})=\mc{O} (W^{-2}).
\]
This completes the proof of Theorem~\ref{thm} away from the edges.\qed

\subsection{Proof of Theorem~\ref{thm2}}\label{sectproofthm2}

Our goal is to prove the estimate $|\partial_{E}^{n}\rho_{N} (E)|\leq \ W^{n-1}  C(E)^{n} n!,$ $n\geq 1,$
uniformly in $N.$
Using the supersymmetric representation we have seen that
\[
\begin{split}
-\pi \partial_{E}^{n}\rho_{N} (E)&= \frac{1}{N}\partial_{E}^{n} \lim_{\eps\searrow 0}\Im \big\langle \tr G[H_N](E_\eps)\big\rangle=
 \frac{1}{N}\Im \lim_{\eps\searrow 0} \partial_{E}^{n} \tr \big\langle G[H_N](E_\eps)\big\rangle\\
&=\tfrac{(-1)^{n}}{N}
 \sum_{q=1}^{n} 
\sum_{\substack{n_{1},\dotsc n_{q}\geq 1 \\  n_{1}+\dotsb+ n_{q}=n}} \tfrac{n!}{n_{1}!\cdots n_{q}!}\sum_{m=0}^{q}
\sum_{\substack{1\leq y_{1}< y_{2}< \dotsb  < y_{q}\leq N\\
   y_{m}\leq x<y_{m+1}}}
\hspace{-0.5cm}\Im I_{x, y_{1},\dotsc ,y_{q}}^{m,n_{1},\dotsc ,n_{q}}
 \end{split}
\]
where $ I_{x, y_{1},\dotsc ,y_{q}}^{m,n_{1},\dotsc ,n_{q}}$ was defined in Corollary~\ref{corfinalrep}
and we use the convention $y_{0}=1,$ $y_{q+1}=N+1$.
Using  $\mc{T} (\Lambda f)=T (\Lambda f),$ we can reorganize the integral as follows:
\begin{equation}
\begin{split}
&I_{x, y_{1},\dotsc ,y_{q}}^{m,n_{1},\dotsc ,n_{q}}=
\int \tfrac{\ud\lambda_1\ud\lambda_2}{2\pi }\, 
 \big[  T^{x-y_{m}} \Lambda^{n_{m}}  \prod_{j=m-1}^{1}\Big (T^{y_{j+1}-y_{j}}\Lambda^{n_{j}}\Big )\mc T^{y_{1}-1}e^{-V}](\lambda )\\
&\qquad \cdot [e^{V(\lambda )} \,\Lambda^{-1}] \  \big [ T^{y_{m+1}-x}\Lambda^{n_{m+1}}
\prod_{j=m+1}^{q-1}\Big (T^{y_{j+1}-y_{j}}\Lambda^{n_{j+1}}\Big )   
\mc T^{N-y_{q}}e^{-V}](\lambda ),
\end{split}
\end{equation}
where we use the convention $n_{0}=0=n_{q+1}.$ 
Note that by \eqref{cor:Tnormest} for all $m,n\geq 1$ it holds
\[
\|\Lambda^{-1} (\Lambda^{n}T^{m})f \|_{p}\leq C^{n} \sqrt{n!}\ (1-c/W)^{m-1} \|\Lambda^{-1}f\|_p.
\]
When $f=\Lambda u$ this estimate gives a factor $\|u\|_p=\mc{O} (W^{-\frac{1}{p}}).$ The following lemma
shows the bound can be improved.
\begin{lemma}\label{le:lambdaubound}
For all $n,m\geq 1$
\[
\|\Lambda^{-1} (\Lambda^{n}T^{m})\Lambda u \|_{p}\leq C^{n} \sqrt{n!}\ (1-c/W)^{m-1} W^{-\frac{1}{p}-\frac{1}{2}}.
\]
\end{lemma}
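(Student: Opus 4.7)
The plan is to combine the approximate-eigenfunction property of $\Lambda u_0^{(3)}$ under $T$ from Proposition~\ref{prop:nexteigenf} with the smallness of $u - u_0^{(3)}$ from Corollary~\ref{c:bounds}. Rewriting the left-hand side as $\Lambda^{n-1}T^m(\Lambda u)$, I would decompose
\[ \Lambda u = \Lambda u_0^{(3)} + \Lambda(u-u_0^{(3)}) \]
and treat the two summands separately.

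The easier piece is the error term. The first bound of Corollary~\ref{cor:specbound} applied with $f = \Lambda(u-u_0^{(3)})$, so that $\Lambda^{-1} f = u - u_0^{(3)}$, yields
\[ \|\Lambda^{n-1} T^m \Lambda(u-u_0^{(3)})\|_p \leq C^n\sqrt{n!}\,(1-c/W)^{m-1}\, \|u-u_0^{(3)}\|_p, \]
and Corollary~\ref{c:bounds} gives $\|u-u_0^{(3)}\|_p = \mc{O}(W^{-1/p-1})$, so this contribution is even better than what the lemma asserts.

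For the main piece $\Lambda u_0^{(3)}$, I would iterate \eqref{eq:nexteigenf1} to obtain
\[ T^m(\Lambda u_0^{(3)}) = \mu^m \Lambda u_0^{(3)} + \sum_{j=0}^{m-1} \mu^{m-j}\, T^j(\Lambda\,\mathrm{Rem}). \]
The leading term contributes $|\mu|^m \|\Lambda^n u_0^{(3)}\|_p$; the explicit Gaussian form of $u_0^{(3)}$ and the standard estimate $\|\Lambda^n e^{-\alpha W\lambda^2}\|_p \leq C^n\sqrt{n!}\,W^{-n/2-1/p}$ together with $|\mu|\leq 1-c/W$ deliver the target bound (the factor $W^{-n/2}$ is at least $W^{-1/2}$ because $n\geq 1$). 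For each summand of the remainder sum, the first bound of Corollary~\ref{cor:specbound} (with $f = \Lambda\,\mathrm{Rem}$) gives
\[ \|\Lambda^{n-1} T^j(\Lambda\,\mathrm{Rem})\|_p \leq C^n\sqrt{n!}\,(1-c/W)^{j-1}\,\|\mathrm{Rem}\|_p \qquad (j\geq 1), \]
with a direct Gaussian estimate handling $j=0$; the bound $\|\mathrm{Rem}\|_p = \mc{O}(W^{-3/2-1/p})$ is already recorded in the proof of Proposition~\ref{prop:nexteigenf}.

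The only delicate point, and the main obstacle, is the resulting geometric sum $\sum_{j=0}^{m-1}|\mu|^{m-j}(1-c/W)^{j-1}$, which is bounded by $C\,m\,(1-c/W)^{m-1}$. The apparent extra factor of $m$ would a priori destroy the uniformity in $N$, but is absorbed by trading a sliver of the exponential decay: the elementary inequality $m(1-c/W)^{m-1}\leq C\,W\,(1-c'/W)^{m-1}$, valid for any fixed $c'<c$, converts $m$ into $W$, which then combines precisely with the $W^{-3/2-1/p}$ from $\|\mathrm{Rem}\|_p$ to produce the required $W^{-1/2-1/p}$.
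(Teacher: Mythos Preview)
Your proof is correct and follows essentially the same route as the paper. The paper also splits $\Lambda u=\Lambda u_0^{(3)}+\Lambda(u-u_0^{(3)})$, handles the error piece via Corollary~\ref{cor:specbound} and Corollary~\ref{c:bounds}, and for the main piece writes $T^m(\Lambda u_0^{(3)})=\mu^m\Lambda u_0^{(3)}+[T^m-\mu^m\mathrm{id}]\Lambda u_0^{(3)}$; the only cosmetic difference is that the paper invokes the ready-made bound \eqref{eq:nexteigenf2} for the bracket (whose proof already contains the same geometric sum and the same $m\leq CW$ absorption you spell out), whereas you reproduce that argument inline.
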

\begin{proof}
We decompose $T^{m} \Lambda u= T^{m} \Lambda (u-u_{0}^{(3)})+ \mu^{m} \Lambda u_{0}^{(3)}+ [T^{m}-\mu^{m}\mathrm{id} ]\Lambda u_{0}^{(3)},$
where we recall that $\mu= (1+\tfrac{2\alpha }{W})^{-1}$ and $|\mu|\leq  (1-\tfrac{c}{W})^{-1}.$ Then
\[
\begin{split}
\|\Lambda^{n-1}T^{m}\Lambda u \|_{p} \leq & \|\Lambda^{n-1}T^{m}\Lambda (u-u_{0}^{(3)}) \|_{p}
+|\mu |^{m} \|\Lambda^{n}u_{0}^{(3)} \|_{p}+
 \|\Lambda^{n-1}[T^{m}-\mu^{m}\mathrm{id} ]\Lambda u_{0}^{(3)} \|_{p}\\
 &\leq \  C^{n} \sqrt{n!}\ (1-\tfrac{c}{W})^{m-1}  \| (u -u_{0}^{(3)}) \|_{p}
+|\mu |^{m} \|\Lambda^{n}u_{0}^{(3)} \|_{p}\\
&\quad + \|\Lambda^{n}e^{-V} \|_{\infty}
\|\Lambda^{-1}e^{V}[T^{m}-\mu^{m}\mathrm{id} ]\Lambda u_{0}^{(3)} \|_{p}\\
& \leq  C^{n} \sqrt{n!}\ (1-\tfrac{c}{W})^{m-1} [W^{-\frac{1}{p}-1}+ W^{-\frac{1}{p}-\frac{n}{2}} +W^{-\frac{1}{p}-\frac{1}{2}} ]
\\&=
 {C'}^{n} \sqrt{n!}\ (1-\tfrac{c}{W})^{m-1}W^{-\frac{1}{p}-\frac{1}{2}},
\end{split}
\]
where in the first line we used \eqref{cor:Tnormest} and in the last line Corollary~\ref{c:bounds}, Proposition~\ref{prop:nexteigenf} and
the explicit form of $u_{0}^{(3)},$ together with the constraint $n\geq 1.$ 
\end{proof}

\subsubsection{Case $n=1$}
For $n=1$ we have
$-\pi \partial_{E}\rho_{N} (E)= \frac{1}{N}\Im\lim_{\eps\searrow 0}\sum_{yy'} \langle G_{yy'}[H_N](E_\eps)G_{y'y}[H_N](E_\eps) \rangle $.
From Corollary \ref{corfinalrep} we have the representation
$\lim_{\eps\searrow 0}\langle G_{yy'}[H_N](E_\eps)G_{y'y}[H_N](E_\eps) \rangle =
-J_{yy'} +\sum_{xx'=1}^{N} J_{yx} J_{y'x'}  I_{xx'}$ where for $x' \geq x$
\[
I_{xx'}= \frac{1}{2\pi }
\int \ud\lambda_1\ud\lambda_2 \Lambda^{-1}\, e^{V(\lambda)} 
\big[\mc T^{x-1}e^{-V}\big ](\lambda) \, \big[T^{x'-x} \Lambda \mc T^{N-x'}e^{-V}\big ] (\lambda )
\]
and for $x' < x$ we set $I_{xx'} = I_{N-x+1, N-x'+1}$. We want to prove now $|I_{xx'}|\leq CW^{-1} (1-\frac{c}{W})^{x'-x}.$ 
We insert again the decomposition $e^{-V}=u+ (e^{-V}-u),$ which yields
$I_{xx'}= I_{1} (x'-x)+ I_{2} (x-1,x'-x)+I_{2} (N-x', x'-x)+I_{3} (x-1,x'-x,N-x') $ where
\begin{equation}
\begin{split}
I_{1} (k)& := \int \tfrac{\ud\lambda_1\ud\lambda_2}{2\pi }   \ \Lambda^{-1}   e^{V} u\  [T^{k} (\Lambda u)], \hspace{2,5cm}  k\geq 0 \\
I_{2} (k,k') & := \int\tfrac{\ud\lambda_1\ud\lambda_2}{2\pi }   \Lambda^{-1}   e^{V} \  [T^{k} (e^{-V}-u)]\  [T^{k'} (\Lambda u)], \quad k,k'\geq 0  \\
I_{3} (k,k',k'')& := \int\tfrac{\ud\lambda_1\ud\lambda_2}{2\pi }   \Lambda^{-1}  e^{V}  \  [T^{k} (e^{-V}-u)] \ 
[T^{k'} \Lambda\  T^{k''} (e^{-V}-u)],\quad
\end{split}
\end{equation}
and in $I_{3}$ we have $ k+k'+k''=N-1\gg 1.$ To obtain $I_{2} (N-x', x'-x)$ we used in addition
\eqref{eq:integrationbyparts}.  The first integral is bounded by
\[
\begin{split}
2\pi |I_{1} (k)|&\leq  \| e^{V} u\|_{p}  \| \Lambda^{-1}T^{k} \Lambda u\|_{q} \ \leq \
C (1-\tfrac{c}{W})^{k} \| e^{V} u\|_{p} \|u\|_{q} \leq  \mc{O} (W^{-1})  (1-\tfrac{c}{W})^{k}.
\end{split}
\]
where we used \eqref{cor:Tnormest} and Corollary~\ref{c:bounds}.
The second integral is bounded by
\[
\begin{split}
2\pi |I_{2} (k)|&\leq  \| \Lambda^{-1} T^{k} (e^{-V}-u)\|_{p} \  \| e^{V}T^{k'} \Lambda u\|_{q} \\
&\leq \
C (1-\tfrac{c}{W})^{k+k'}  W^{+\frac{1}{2}-\frac{1}{p}} W^{-\frac{1}{q}-\frac{1}{2}}=\mc{O} (W^{-1})  (1-\tfrac{c}{W})^{k+k'}.
\end{split}
\]
where in the first term we used again \eqref{cor:Tnormest} together with \eqref{eq:uVerrorbound} for $p>2.$
In the second term we used $ \| e^{V}\Lambda u\|_{q}=\mc{O} (W^{-\frac{1}{q}-\frac{1}{2}})$ (cf. Corollary~\ref{c:bounds})
for the case $k'=0$.  When  $k'\geq 1$ we apply \eqref{eq:additionalprop} to get
$ \| e^{V}T^{k'} \Lambda u\|_{q}\leq  \| T^{k'-1} \Lambda u\|_{q}+ \frac{C}{W} \| \Lambda^{-1}T^{k'-1} \Lambda u\|_{q}.$
The estimate now follows from Lemma~\ref{le:lambdaubound} and \eqref{cor:Tnormest}.
Note that we are forced to estimate the factor $e^{V}$ together with $T^{k'} \Lambda u$ since for $k=0$ the term
$e^{V} (e^{-V}-u)$ is not integrable. Finally the constraint $k+k'+k''=N-1$ guarantee that $k\leq 1$ or $k'+k''\geq 1.$
We can assume without loss of generality $k\geq 1.$ Then using  \eqref{eq:uVerrorbound} for $p=2$
\[
\begin{split}
2\pi |I_{3} (k)|&\leq  \| \Lambda^{-1}e^{V} T^{k} (e^{-V}-u)\|_{2} \  \| T^{k'} \Lambda T^{k''} (e^{-V}-u)\|_{2} \\
&\leq \
C (1-\tfrac{c}{W})^{N-1}  (\ln W)^{2} =\mc{O} (W^{-1})  (1-\tfrac{c}{W})^{k+k'+k''}.
\end{split}
\]
if $N\geq C (E)W\ln W,$ for $C (E)>0$ some large constant. This completes the proof of
\eqref{eq:th2eq1}.
Performing the sum over $y,y'$ we obtain $|\partial_{E}\rho_{N} (E)|\leq C.$

\subsubsection{Case $n>1$}
As in the case $n=1$ we insert the decomposition $e^{-V}=u+ (e^{-V}-u),$ and reorganize the integral
(eventually  applying also \eqref{eq:integrationbyparts})
as the sum of three terms of the following form:
\begin{equation}
\begin{split}
I_{1}  :=&\int \tfrac{\ud\lambda_1\ud\lambda_2}{2\pi }\, e^{V}\Lambda^{-1}
\Big [ \prod_{j=0}^{l} (T^{m_{j}}\Lambda^{n_{j}}) u \Big ]
\Big [  \prod_{k=0}^{l'} (T^{m'_{k}}\Lambda^{n'_{k}}) u \Big ]
 \\
I_{2}   := &\int \tfrac{\ud\lambda_1\ud\lambda_2}{2\pi }\, e^{V}\Lambda^{-1}
\Big [ [\prod_{j=1}^{l} (T^{m_{j}}\Lambda^{n_{j}})  T^{m_{0}} (\Lambda^{n_{0}} u) \Big ]
\Big [  T^{m'_{0}}\Lambda^{n'_{0}} \prod_{k=1}^{l'} (T^{m'_{k}}\Lambda^{n'_{k}}) T^{\bar m'} (e^{-V}-u) \Big ]\\
I_{3}  :=& \int \tfrac{\ud\lambda_1\ud\lambda_2}{2\pi }\, e^{V}\Lambda^{-1}
\Big [ \prod_{j=0}^{l} (T^{m_{j}}\Lambda^{n_{j}}) T^{\bar m} (e^{-V}-u)  \Big ]\ 
\Big [ \prod_{k=0}^{l'} (T^{m'_{k}}\Lambda^{n'_{k}}) T^{\bar m'} (e^{-V}-u) \Big ],
\end{split} 
\end{equation} 
where $l\geq 1,$ $l'\geq 0,$  $n_{j},n'_{k}\geq 1$ and  $m_{j},m'_{k}\geq 0$ for all $j,k,$
with the constraint  $\sum_{j=0}^{l}n_{j}+\sum_{k=0}^{l'}n'_{k}=n.$
Finally $\bar m,\bar m'\geq 0 $ but
must satisfy the constraints $\sum_{j=0}^{l}m_{j}+\sum_{k=0}^{l'}m'_{k} +\bar m+\bar m'=N-1,$
The proof now works as in the case $n=1$ and yields
\[
| I_{x, y_{1},\dotsc ,y_{q}}^{m,n_{1},\dotsc ,n_{q}}|\leq \ \frac{1}{W} C^{n} \prod_{j=1}^{q} (\sqrt{n_{j}!})
\Big[ (1-\tfrac{c}{W})^{y_{max}-y_{min}}+ (1-\tfrac{c}{W})^{y_{max}-1}+ (1-\tfrac{c}{W})^{N-y_{min}}+ (1-\tfrac{c}{W})^{N}   \Big]
\]
where $y_{max}:=\max [y_{q},x],$ and  $y_{min}:=\min [y_{1},x].$ Hence
\[
\begin{split}
\pi |\partial_{E}^{n}\rho_{N} (E)|& \leq \ \frac{C^{n}n!}{W}\  \frac{1}{N} 
 \sum_{q=1}^{n}\ (N W^{q}+W^{q+1})\ \prod_{j} \Big (\sum_{n_{j}}\frac{1}{\sqrt{n_{j}!}}\Big )\ \leq 
 {C'}^{n}n! W^{n-1}.
 \end{split}
\]

\subsection{The case $E=0$}\label{sect:E=0}
At $E=0$ the factor $e^{V}$ may develop a pole. To solve the problem we use \eqref{eq:newid} to replace $u$ by
$u=\mc{T}u_{0}+ T (u-u_{0})$ before doing any other manipulation. Formulas become slightly more cumbersome, but
each factor $e^{V}$ now comes with a prefactor $e^{-V}.$

\section{Contour deformation}\label{S:deform}

To extend the proof to the entire range $E \in (-2, 2)$, the contour of integration has to be
deformed.
One possible strategy (followed in \cite{Disertori_GUE}) is to rotate the contour. The rotation angle must ensure that 
$\Re V$ has only one non-degenerate global minimum at the saddle point. This can only be achieved for a rotation angle
close to $\pi/6$ (cf. \cite[5.1.2]{Disertori_GUE}). However the corresponding transfer operator
$e^{-\Re V/2} e^{-W^{2} (\lambda-\lambda')}e^{-\Re V/2} $ is no longer longer self-adjoint.
Another strategy developed in \cite{DS} consists in  performing  a complex  rotation  
that makes the operator approximately normal. The results in \cite{DS} require 
the resulting function $e^{-\Re V}$ to have only one non degenerate
global minimum. In the present case we would need to rotate by
approximately $\pi/8,$ but then $e^{-\ Re V}$  has two minima. 

Here we therefore use the following strategy: first (Section~\ref{s:contour}), we find a 
good contour $\Gamma$ for the bosonic variable. After the contour deformation, 
the operator $\mc T$ is transformed to a new operator $\mc T_\Gamma$.
The main technical difficulty is to find a replacement for the operator
norm bound of Proposition~\ref{specbound}. We show (Proposition~\ref{p:contour})
that a similar bound holds when the operator is replaced with its $k$-th power,
where $k$ is a sufficiently large number, independent of $W$. Having this 
bound at hand, the proof follows the lines of its counterparts for $|E|<\sqrt\frac{32}9$.

We fix an energy $|E|<2$, the dependence on which is omitted from the notation. An
inspection of the argument shows that all the estimates are uniform on
compact subintervals of $(-2, 2)$.

\subsection{Choice of the contour}\label{s:contour}
Decompose
\[ e^{-V(\lambda)}= e^{-V_1(\lambda_1) - V_2(\lambda_2)}~, \quad 
e^{-V_1(\lambda_1)} = e^{-\half \lambda_1^2 - \mc E \lambda_1}\frac{1}{ \ol{\mc E} -\lambda_1}~, \quad
e^{-V_2(\lambda_2)} = e^{-\half \lambda_2^2 + i \mc E\lambda_2} \left( \ol{\mc E}-i\lambda_2\right)~. \]
\begin{lemma}\label{l:contour}
For any $|E| < 2$ there exists a contour $\Gamma$ and numbers $C_\Gamma, c_\Gamma > 0$ such that 
\begin{enumerate}[(1)]
\item $\Gamma$ contains the segments $(-\infty, -C_\Gamma]$, 
$[-c_\Gamma, c_\Gamma]$, $[C_\Gamma, \infty)$; 
\item the angle between $\Gamma$ and the real axis stays in the range $(-\frac\pi4(1-c_\Gamma), \frac\pi4(1-c_\Gamma))$;
\item $e^{-V_1}$ is analytic in $\Gamma^+ = \bigcup\limits_{\substack{a, a' \in \Gamma\\
|a-a'| < c_\Gamma}} [a, a']$;
\item $\Gamma$ is homologous to $(-\infty, \infty)$ in the domain of analyticity of $e^{-V_1}$;
\item $\min_{a \in \Gamma^+}  \Re V_1(a)$ is uniquely attained at $a=0$.
\end{enumerate}
\end{lemma}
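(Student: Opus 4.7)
The plan is to take $\Gamma$ to be the real axis, modified in the bounded region where $\Re V_1$ might fail to be non-negative on $\mathbb{R}$ by a smooth arc dipping into the lower half-plane. The geometry is benign: the only pole of $e^{-V_1}$ is $\ol{\mc E}=E/2+i\sqrt{1-E^2/4}$, which sits in the \emph{upper} half-plane at vertical distance $\sqrt{1-E^2/4}>0$ from $\mathbb{R}$, so the closed lower half-plane and an open strip of width $\sqrt{1-E^2/4}$ above $\mathbb{R}$ both lie in the analyticity domain of $e^{-V_1}$.

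First I would analyse $\Re V_1$ on the real axis. Using $\mc E\,\ol{\mc E}=1$, a direct computation gives $\Re V_1(0)=0$, $V_1'(0)=\mc E-1/\ol{\mc E}=0$, and $V_1''(0)=1-\mc E^2$ with $\Re V_1''(0)=2-E^2/2>0$. Thus $0$ is a non-degenerate local minimum of $\Re V_1|_{\mathbb{R}}$ with value $0$. Combined with $\Re V_1(x)\sim \tfrac12 x^2$ at infinity, this yields constants $C_\Gamma,c_\Gamma,\eta,\eta'>0$ (depending continuously on $E$ on compact subintervals of $(-2,2)$) such that $\Re V_1\ge\eta$ on $\{|x|\ge C_\Gamma\}$, $\Re V_1(x)\ge\eta' x^2$ on $[-2c_\Gamma,2c_\Gamma]$, and the bad set $\{x\ne 0:\Re V_1(x)\le 0\}$ is compactly contained in $[-C_\Gamma,-2c_\Gamma]\cup[2c_\Gamma,C_\Gamma]$. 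Inspecting the real critical points via $x[x^2-(E/2)x+(2-E^2/2)]=0$, together with $\Re V_1(E)=E^2>0$, shows that the bad set (when non-empty) is a compact subinterval of the open interval $(\min(0,E),\max(0,E))$.

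Next I would verify that downward deformation strictly raises $\Re V_1$ on that subinterval. A direct differentiation yields
\[
-\partial_y\Re V_1(x+iy)\Big|_{y=0}=\sqrt{1-E^2/4}\,\frac{x(E-x)}{x^2-Ex+1},
\]
which is positive precisely when $x$ lies strictly between $0$ and $E$, hence uniformly bounded below by some $\gamma(E)>0$ on the bad interval. By continuity this bound, up to a factor $\tfrac12$, persists in a thin strip $\{y\in[-d,0]\}$ for some $d=d(E)>0$. Define $\Gamma$ to coincide with $\mathbb{R}$ on $(-\infty,-C_\Gamma]\cup[-2c_\Gamma,2c_\Gamma]\cup[C_\Gamma,\infty)$, and on the two remaining compact intervals to be a smooth dipping arc into the lower half-plane of depth $\le d$ with slope bounded by $\tfrac\pi4(1-c_\Gamma)$ (easy to arrange since the interval length is fixed and $d$ is at our disposal). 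Shrinking $c_\Gamma$ further so that $2c_\Gamma+d<\sqrt{1-E^2/4}$ puts the tube $\Gamma^+$ inside the analyticity strip and clear of $\ol{\mc E}$, giving (3); (4) follows from the homotopy of $\Gamma$ to $\mathbb{R}$ inside that domain, and (1),(2) hold by construction.

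The \emph{main obstacle} is condition (5). Since $\Re V_1$ is harmonic with $V_1''(0)\ne 0$, the origin is genuinely a \emph{saddle} of $\Re V_1$ as a function on $\mathbb{C}$, so any segment in $\Gamma^+$ that slopes off the real axis near $0$ may a priori cause $\Re V_1$ to become negative. The resolution is to keep $\Gamma$ exactly on $\mathbb{R}$ in the wide window $[-2c_\Gamma,2c_\Gamma]$: then any $[a,a']\subset\Gamma^+$ with $|a-a'|<c_\Gamma$ and $a\in[-c_\Gamma,c_\Gamma]$ automatically has $a'\in[-2c_\Gamma,2c_\Gamma]$ too, so $[a,a']\subset\mathbb{R}$ and the quadratic lower bound $\Re V_1(x)\ge\eta' x^2$ applies. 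On the complement of this neighbourhood of $0$, the bounds $\Re V_1\ge\gamma d/2$ on the dipping arcs and $\Re V_1\ge\eta$ at infinity transfer to $\Gamma^+$ by continuity after a final small shrinkage of $c_\Gamma$, yielding $\Re V_1>0$ throughout $\Gamma^+\setminus\{0\}$ and thereby establishing (5).
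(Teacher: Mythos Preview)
Your overall plan (keep $\Gamma$ on $\mathbb{R}$ except for a bounded dip into the lower half-plane) is reasonable and differs from the paper, which instead cites \cite{Disertori_GUE} for a rotated line $\zeta\mathbb{R}$ with $|\arg\zeta|<\pi/4$ on which $\Re V_1$ has its unique minimum at $0$, and then builds a piecewise linear contour following $\mathbb{R}$ near $0$ and near $\infty$ and $\zeta\mathbb{R}$ in between. Both constructions push the contour into the lower half-plane over the bad interval; the paper's straight-segment route simply offloads the hard quantitative step to an already-established fact.

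However, there is a genuine gap in your argument. You choose $d$ solely by continuity so that the first-order bound $-\partial_y\Re V_1\ge\gamma/2$ persists on the strip $\{-d\le y\le 0\}$; this gives a gain of at most $\gamma d/2$ when you dip by $d$. But on the bad interval $\Re V_1|_{\mathbb{R}}$ is strictly negative (with deficit, say, $\delta(E)=|\min_x\Re V_1(x)|>0$), and nothing in your argument ensures $\gamma d/2>\delta(E)$. Your asserted bound ``$\Re V_1\ge\gamma d/2$ on the dipping arcs'' therefore does not follow: over the bad interval you only get $\Re V_1\ge -\delta(E)+\gamma d/2$, which may still be negative for the small $d$ that continuity hands you. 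You cannot simply enlarge $d$ without losing the derivative bound (since $-\partial_y\Re V_1$ vanishes at finite depth), so the circularity is real.

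The approach is salvageable, but it needs a genuine computation rather than a first-order estimate. For instance, writing $z=x+iy$, one can check directly that along the lower arc of the circle $|z-\ol{\mc E}|=1$ (which meets $\mathbb{R}$ exactly at $0$ and $E$) one has $\Re V_1(z)=(\Re z)^2>0$; in particular, for every $x$ in the bad interval there \emph{is} a depth $y^*(x)$ with $\Re V_1(x+iy^*(x))>0$. One would then still have to produce a contour in the region $\{\Re V_1>0\}$ meeting condition~(2), and this is not automatic: for $|E|$ close to $2$ the bad interval approaches $0$ and the required depth grows, so the slope of any connecting arc must be controlled carefully. The paper's route via the rotated line avoids precisely this bookkeeping.
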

Note that when $|E|\to 2$ we  need to take $c_{\Gamma }\to 0$ too.
The proof is an elementary verification. We reduce it to a similar verification
already performed in \cite{Disertori_GUE}.
\begin{proof}
In \cite[Section 5.1.2]{Disertori_GUE} it is proved that for any $|E| < 2$ there 
exists $\zeta$ with $|\zeta| = 1$, $|\arg \zeta|<\pi/4$ such that 
$\min_{a \in \zeta \mathbb{R}}  \Re V_1(a)$ is uniquely attained at $a=0$
and the singularity of $\Re V_1$ does not lie between $\zeta \mathbb{R}$ and $\mathbb{R}$. For
$C > c > 0$, denote by $\Gamma(c, C)$ the piecewise linear contour
going from $-\infty$ to $\infty$ via the points $-c-2\Re \zeta C$, $-c-\zeta C$, $-c$, $c$, $c + \zeta C$, $c + 2 \Re \zeta C$: 
\[ \begin{split}
\Gamma(c, C) &= (-\infty, -c-2\Re \zeta C] \, (1) + [-c-2\Re\zeta C, -c-\zeta C] \, (2) + 
[-c - \zeta C, -c] \, (3) \\
&+ [-c, c] \, (4)  + [c, c + \zeta C] \, (5) + [c+\zeta C, c + 2 \Re \zeta C] \, (6) + 
[c + 2 \Re \zeta C, \infty) \, (7)~.
\end{split} \]
We first choose a large $C>0$ and then a small $c > 0$. For sufficiently large $C$ one has $\Re V_1 > \const > 0$ in the entire domain
\[ \Big\{ |z| > C~, \quad \text{$z$ lies between $\mathbb{R}$ and $\zeta\mathbb{R}$}\Big\}~.  \]
In particular, one has $\Re V_1 > \const > 0$ on the 
four segments $(1), (2), (6), (7)$. For this value of $C$, one can choose $c>0$
sufficiently small so that, by a continuity argument,the minimum of $\Re V_1$ on the union of the remaining segments $(3), (4), (5)$ is uniquely attained at the origin.
For these values of $C$ and $c$, let $\Gamma = \Gamma(c, C)$.

Then $\Gamma$ satisfies the conditions (1)--(4), and a weakened form of (5) with
$\Gamma$ in place of $\Gamma^+$. By an additional continuity argument, (5) also holds
as stated provided that $c_\Gamma$ is chosen sufficiently small.
\end{proof}

\noindent For a contour $\Gamma$, denote by $K_\Gamma$ the integral operator
with kernel 
\[
K_\Gamma (\lambda,\lambda') = \frac{W^2}{2\pi} \exp\left\{ - \frac{V(\lambda)}{2} - \frac{W^2}{2} (\lambda - \lambda')^2 - 
\frac{V(\lambda')}{2} \right\} \]
acting on $L_p(\Gamma \times \mathbb{R})$. Here we use the convention
$\lambda^2 = \lambda_1^2 + \lambda_2^2$.
The main technical step is the following proposition, the proof of which will be 
the subject of  the next Section~\ref{s:normbd}.
\begin{prop}\label{p:contour}
Let $\Gamma$ be a contour satisfying the conclusions (1)--(5)
of Lemma~\ref{l:contour}. Then there exists $k \geq 1$ such that for any $p \in (1, \infty)$
\[ \| K_\Gamma^k \|_p \leq 1 - \frac{c_p}{W}~. \]
\end{prop}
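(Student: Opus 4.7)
The plan is to follow the strategy of Proposition~\ref{specbound}, adapted to the contour-deformed operator. Two new difficulties arise: the Gaussian factor $e^{-W^2(\lambda_1-\lambda_1')^2/2}$ is complex-valued along the tilted portions of $\Gamma$, and the potential $V_1$ is itself complex on $\Gamma$. As a result, the clean estimate $\|K_\Gamma\|_p \leq 1$ that underpinned Proposition~\ref{specbound} is no longer available: by Schur's bound, together with Lemma~\ref{l:contour}(2)--(5) (which guarantee $\Re[(\lambda_1-\lambda_1')^2] \geq c_\Gamma' |\lambda_1-\lambda_1'|^2$ for $\lambda_1,\lambda_1'\in\Gamma^+$ and $\Re V \geq 0$ on $\Gamma\times\mathbb{R}$), one only obtains a uniform bound $\|K_\Gamma\|_p\leq C_p$ with some $C_p\geq 1$. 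The need for a $k$-th power is precisely to absorb this loss: once we establish $\|K_\Gamma^k\|_2 \leq 1-c/W$, Riesz--Thorin interpolation with the trivial bounds $\|K_\Gamma^k\|_{1,\infty}\leq C^k$ gives $\|K_\Gamma^k\|_p \leq C^{k(1-\theta(p))}(1-c/W)^{k\theta(p)}$, which is $\leq 1-c_p/W$ for $k$ large enough depending on $p$.

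For the $L^2$ bound I would mimic Proposition~\ref{specbound}, choosing a partition of unity $\chi_1^2+\chi_2^2+\chi_3^2 = 1$ with $\chi_1,\chi_2$ supported in small neighbourhoods of the two non-degenerate global minima of $\Re V$ on $\Gamma\times\mathbb{R}$, namely $(0,0)$ and $(0,\,2\sqrt{1-E^2/4})$, and $\chi_3$ supported in the complement, where $\Re V\geq c_0>0$. Expanding $K_\Gamma^k = \sum_i \chi_i K_\Gamma^k \chi_i + A_k$ and invoking the almost-disjointness of supports yields $\|K_\Gamma^k\|_2 \leq \max_i \|\mathbbm{1}_{\supp\chi_i} K_\Gamma^k \mathbbm{1}_{\supp\chi_i}\|_2 + \|A_k\|_2$. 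Writing $A_k$ as a telescoping sum of $k$ terms, each of which carries a commutator $[\chi_i, K_\Gamma]$ whose kernel contains a factor $\chi_i(\lambda)-\chi_i(\lambda')$, and hence gains a $W^{-1}$ from the Gaussian concentration, one obtains $\|A_k\|_2 = O(k^2/W^2)$. The far-minimum piece is controlled by $|e^{-V/2}|\leq e^{-c_0/2}$ on $\supp\chi_3$, giving $\|\mathbbm{1}_{\supp\chi_3} K_\Gamma^k \mathbbm{1}_{\supp\chi_3}\|_2 \leq C e^{-kc_0/2}$, which is negligible for $k$ large.

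The main obstacle is the analysis of the two near-minimum pieces. In a small neighbourhood of each minimum, $V$ is well approximated by its quadratic Taylor expansion, with leading coefficient $2\alpha^2 = (1-\mc E^2)/2$ in the $\lambda_1$ variable; after parameterizing the relevant portion of $\Gamma$ by a real arc-length variable, the local operator becomes a \emph{complex} harmonic Kac operator, whose effective quadratic coefficient has strictly positive real part by the choice of $\Gamma$. The difficulty is that this operator is non-self-adjoint, so its $L^2$-norm exceeds its spectral radius by a condition-number factor, which could in principle destroy the contractive bound. I would handle this by diagonalizing the complex harmonic Kac operator explicitly via Mehler's formula, or equivalently by a complex version of the creation/annihilation algebra: the resulting spectrum is discrete, with top eigenvalue $1$ and gap of order $1/W$, and the eigenbasis has a $W$-independent condition number (the Bergmann-type coherent-state transform conjugates the Kac operator to a scalar multiplication, modulo $W$-independent bounded factors). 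This yields $\|\mathbbm{1}_{\supp\chi_i} K_\Gamma^k \mathbbm{1}_{\supp\chi_i}\|_2 \leq M(1-c/W)^k$ for a $W$-independent constant $M$, and choosing $k$ large enough that $M(1-c/W)^k \leq 1-c/(2W)$ completes the argument.
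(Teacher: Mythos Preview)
Your interpolation scheme does not close. With only the Schur bound $\|K_\Gamma\|_{1,\infty}\leq C$ (and $C>1$ is forced on the tilted portions of $\Gamma$, since $\frac{W}{\sqrt{2\pi}}\int_\Gamma |e^{-W^2(\lambda_1-\lambda_1')^2/2}|\,|d\lambda_1'| = 1/\sqrt{\cos 2\phi}$ at local slope~$\phi$), Riesz--Thorin between $\|K_\Gamma^k\|_{1,\infty}\leq C^k$ and any $L^2$ bound of the form $\|K_\Gamma^k\|_2\leq M(1-c/W)^k$ yields $\|K_\Gamma^k\|_p\leq \big[C^{1-\theta}(1-c/W)^{\theta}\big]^k M^\theta$; for fixed $k$ and $W\to\infty$ the bracket tends to $C^{1-\theta}>1$, so the right-hand side exceeds~$1$. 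The same quantitative obstruction already appears in your $L^2$ step: $M(1-c/W)^k\sim M-Mkc/W$, which stays above $1$ for every $W$-independent $k$ as soon as $M>1$, and your own discussion of the non-self-adjoint harmonic Kac operator concedes only that $M$ is $W$-independent. (Your $\chi_3$ estimate $Ce^{-kc_0/2}$ is also unjustified: the restriction $\mathbbm{1}_{\supp\chi_3}$ constrains only the two endpoints of $K_\Gamma^k$, not the $k-1$ intermediate integration variables, which are free to visit the minima of $\Re V$.)

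The missing ingredient is a Laplace analysis of the composed kernel $K_\Gamma^k(\lambda,\lambda')$ itself (Lemma~\ref{l:lapl} in the paper): for $|\lambda-\lambda'|$ small,
\[
K_\Gamma^k(\lambda,\lambda')=\tfrac{W^2}{2\pi k}\,e^{-\frac{W^2}{2k}(\lambda-\lambda')^2}\,
e^{-\frac{V(\lambda)}2-V(\hat\lambda^1)-\cdots-V(\hat\lambda^{k-1})-\frac{V(\lambda')}2}\,(1+\mc O(W^{-2})),
\]
with $\hat\lambda^j=\lambda+\tfrac jk(\lambda'-\lambda)$. This buys two things you could not obtain. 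First, $\int|K_\Gamma^k(\lambda,\cdot)|$ is now $1+o(W^{-1})$: on the flat part of $\Gamma$ there is no slope loss, while on the tilted part the accumulated potential $\exp(-k\,\Re V_1(c_\Gamma/3))$ beats the fixed factor $1/\sqrt{\cos 2\phi}$ once $k$ is large enough. Hence $\|K_\Gamma^k\|_{1,\infty}\leq 1+o(W^{-1})$, and interpolation with the $L^2$ bound gives the Proposition with a \emph{single} $k$. Second, the $L^2$ bound is proved for the \emph{pointwise modulus} $|K_\Gamma^k(\cdot,\cdot)|$, which by the displayed formula is, near each minimum, a real Gaussian times $e^{-\Re V}$---a genuinely self-adjoint harmonic Kac operator whose norm equals its spectral radius. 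The non-self-adjointness you correctly flagged as the main obstacle is thus bypassed rather than confronted, and the $\chi_3$ piece is handled by the same formula, since for $\lambda,\lambda'\in\supp\chi_3$ all interpolants $\hat\lambda^j$ stay away from the minima.
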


\subsection{Proof of the operator norm bound}\label{s:normbd}

To prove Proposition~\ref{p:contour}, we study the kernel of the operator
$K_\Gamma^k$. 

\begin{lemma}\label{l:lapl} Let $\Gamma$ be a contour satisfying the conclusions (2)--(3)
of Lemma~\ref{l:contour}.  For any  $k$,
\[\begin{split} 
&K_\Gamma^k(\lambda, \lambda') = 
\exp\left\{ -\frac{W^2}{2k}(\lambda - \lambda')^2 - \frac{V(\lambda)}{2} - V(\hat\lambda^1) - 
V(\hat\lambda^2) - \cdots -  V(\hat\lambda^{k-1})  - \frac{V(\lambda')}{2}\right\} \\
&\quad\times \frac{W^2}{2\pi k} \, (1 + \mc O(W^{-2}))~, \qquad \lambda,\lambda' \in \Gamma\times\mathbb{R}~, \, |\lambda - \lambda'| < c_\Gamma/2~,\end{split}\]
where $\hat\lambda^j = \lambda + \frac{j}{k}(\lambda' - \lambda)$, and the asymptotics
is uniform on compact sets.
\end{lemma}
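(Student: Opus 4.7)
The plan is a standard Laplace/saddle-point expansion of the $k$-fold iterated integral representation of $K_\Gamma^k$. Writing out the composition,
\[
K_\Gamma^k(\lambda,\lambda') = \left(\tfrac{W^2}{2\pi}\right)^k \int_{(\Gamma\times\mathbb{R})^{k-1}} \prod_{j=1}^{k-1}d\mu_j \, e^{-f(\mu_1,\ldots,\mu_{k-1})},
\]
where $\mu_0=\lambda$, $\mu_k=\lambda'$, and
\[
f = \tfrac{V(\lambda)}{2} + \sum_{j=1}^{k-1}V(\mu_j) + \tfrac{V(\lambda')}{2} + \sum_{j=0}^{k-1}\tfrac{W^2}{2}(\mu_j - \mu_{j+1})^2.
\]
The leading (quadratic) part of $f$ has a unique critical point obtained by solving the discrete Dirichlet problem $\mu_{j-1}-2\mu_j+\mu_{j+1}=0$ with boundary values $\lambda,\lambda'$, namely the linear interpolant $\hat\lambda^j = \lambda + \tfrac{j}{k}(\lambda'-\lambda)$. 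The hypothesis $|\lambda-\lambda'|<c_\Gamma/2$ together with condition~(3) in Lemma~\ref{l:contour} guarantees that the segment $[\lambda,\lambda']$, and hence each $\hat\lambda^j$, lies in the domain of analyticity $\Gamma^+$ of $e^{-V_1}$.

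The next step is to change variables $\mu_j = \hat\lambda^j + \nu_j/W$, using the analyticity of $V$ in $\Gamma^+$ to deform the contour in each $\mu_j$ so that $\nu_j$ ranges over a straight segment through the origin (for the $a$-coordinate) together with $\mathbb{R}$ (for the $b$-coordinate). The Jacobian contributes $W^{-2(k-1)}$. A direct computation using the telescoping identity $\sum_j(\hat\lambda^j-\hat\lambda^{j+1})(\nu_j-\nu_{j+1})=-\tfrac{\lambda'-\lambda}{k}(\nu_k-\nu_0)=0$ yields
\[
\sum_{j=0}^{k-1}\tfrac{W^2}{2}(\mu_j-\mu_{j+1})^2 = \tfrac{W^2}{2k}(\lambda-\lambda')^2 + \tfrac{1}{2}\nu^\top Q\,\nu,
\]
where $Q$ is the discrete Dirichlet Laplacian on $\{1,\ldots,k-1\}$, with $\det Q=k$ per coordinate. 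Taylor-expanding $V(\hat\lambda^j+\nu_j/W)=V(\hat\lambda^j)+V'(\hat\lambda^j)\nu_j/W + O(\nu_j^2/W^2)$ recovers $\sum_{j=1}^{k-1}V(\hat\lambda^j)$ at leading order, plus a linear term of size $1/W$ and higher-order corrections.

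A Gaussian integration completes the proof. Completing the square absorbs the $O(1/W)$ linear term at the cost of an $O(W^{-2})$ shift in the exponent. The resulting Gaussian integral
\[
\int e^{-\tfrac{1}{2}\nu^\top Q \nu}\,d\nu = \tfrac{(2\pi)^{k-1}}{k}
\]
(in each coordinate, combined $(2\pi)^{2(k-1)}/k^2$) combines with the prefactor $(W^2/2\pi)^k$ and Jacobian $W^{-2(k-1)}$ to give exactly $W^2/(2\pi k)$. Expanding the remaining exponential of higher-order terms and integrating against the Gaussian, the odd-order contributions vanish by parity, leaving an error of size $O(W^{-2})$; smoothness and uniformity follow from boundedness of all derivatives of $V$ on compact subsets of $\Gamma^+$.

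The main obstacle is the rigorous justification of the contour deformation in the intermediate variables $\mu_j$: one must verify that the non-straightness of $\Gamma$ does not obstruct the Laplace argument, and that the tail contribution from $|\mu_j - \hat\lambda^j|\gtrsim 1$ is super-polynomially small. For the former, the bound $|\lambda-\lambda'|<c_\Gamma/2$ and condition~(2) of Lemma~\ref{l:contour} (the angle bound) keep the deformed contour inside $\Gamma^+$, where $V$ is holomorphic. For the latter, condition~(2) ensures $\Re(\mu_j-\mu_{j+1})^2 \geq c|\mu_j-\mu_{j+1}|^2$ along $\Gamma$, so the Gaussian provides genuine exponential decay of size $e^{-cW^2}$ away from the interpolating saddle, and condition~(5) (uniqueness of the minimum of $\Re V_1$) rules out competing saddles.
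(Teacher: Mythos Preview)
Your proposal is correct and follows essentially the same Laplace/saddle-point strategy as the paper: identify the linear interpolants $\hat\lambda^j$ as the unique critical point of the dominant quadratic form, deform the $\lambda_1^j$-contours locally to a straight segment through the saddle, separate a near-saddle piece (where the Gaussian with Hessian the Dirichlet Laplacian $Q$, $\det Q = k$, produces the prefactor $W^2/(2\pi k)$ and parity kills the linear correction) from a tail piece controlled by $\Re\phi \geq c|\boldsymbol\lambda-\hat{\boldsymbol\lambda}|^2$ via the angle condition~(2). One small correction: condition~(5) plays no role in this lemma (the statement only assumes (2)--(3)); the ``competing saddles'' you worry about do not arise here because $V$ is a lower-order perturbation of the Gaussian, and the tail bound comes entirely from the coercivity of $\Re\phi$ guaranteed by~(2).
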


\begin{proof} The proof proceeds by a saddle point analysis. Consider the integral
\[
K_\Gamma^k(\lambda, \lambda') = \int_{(\Gamma \times \mathbb{R})^{k-1}}  K_\Gamma(\lambda, \lambda^1) \cdots K_\Gamma(\lambda^{k-1}, \lambda') \,
\prod_{j=1}^{k-1} \ud\lambda^j_1 \ud\lambda^j_2~.
\]
The saddle point equations 
\[
\lambda^j = \frac{\lambda^{j-1} + \lambda^{j+1}}{2} \quad \text{with} \quad
\lambda^0 = \lambda, \, \lambda^k = \lambda'
\]
have a unique solution given by
$\hat{\boldsymbol \lambda} = (\hat\lambda^1, \cdots, \hat\lambda^{k-1})$.
Extracting the saddle contribution we get
\[
K_\Gamma^k(\lambda, \lambda') = \tfrac{W^2}{2\pi k}
e^{-\frac{W^2}{2k}(\lambda - \lambda')^2}e^{- \frac{V(\lambda)}{2} - V(\hat\lambda^1) - 
V(\hat\lambda^2) - \cdots -  V(\hat\lambda^{k-1})  - \frac{V(\lambda')}{2}}\, \tilde{K}_\Gamma^k(\lambda, \lambda')
\]
where
\[
\begin{split}
 \tilde{K}_\Gamma^k(\lambda, \lambda')&= k \int_{(\Gamma \times \mathbb{R})^{k-1}}   \hspace{-0,4cm}
 e^{-W^2 (\phi(\boldsymbol \lambda_1)  + \phi(\boldsymbol \lambda_2))}
 e^{-\sum_{j=1}^{k-1} [V_\iota(\lambda_\iota^{j})- V_\iota(\hat\lambda_\iota^{j})]}
\prod_{j=1}^{k-1} \tfrac{W^{2}}{2\pi} \ud\lambda^j_1 \ud\lambda^j_2~, \\
\phi(\boldsymbol \lambda_\iota) &=
\frac12 \sum_{j=0}^{k-1} \left[ (\lambda^{j}_\iota-\lambda^{j+1}_\iota)^2 - \frac{(\lambda_\iota'-\lambda_\iota )^2}{k^2} \right]= 
\frac12 \sum_{j=0}^{k-1}\left (\lambda^{j}_\iota-\lambda^{j+1}_\iota+\tfrac{\lambda'-\lambda}{k} \right)^{2}
\end{split} 
\]
To conclude it is enough to prove that  $ \tilde{K}_\Gamma^k(\lambda, \lambda')= (1 + \mc O(W^{-2}))$
for $\lambda,\lambda' \in \Gamma\times\mathbb{R}$ and $ |\lambda - \lambda'| < c_\Gamma/2.$

Perform a contour deformation in each of the variables $\lambda^j_1$, so that $\Gamma$ is replaced
with a homologous contour $\Gamma'(\lambda_1, \lambda_1')$ which contains the straight segment 
\[
L(\lambda_1, \lambda_1') =
[\lambda_1 - 3k W^{-2/5}e^{i \arg(\lambda_1' - \lambda_1)},  \lambda_1' + 3k W^{-2/5}e^{i \arg(\lambda_1' - \lambda_1)}]
\]
and still satisfies the conclusions (2)--(3) of Lemma~\ref{l:contour}. We claim that for $\iota = 1,2$ one has 
\begin{equation}\label{eq:qfbd}
\Re \phi(\boldsymbol \lambda_\iota) \geq c_k |\boldsymbol \lambda_\iota -  {\boldsymbol {\hat \lambda}_\iota}|^2
\end{equation}
on the integration contour.   For $\iota = 2$ 
$\Re \phi(\boldsymbol{\lambda}_{2}) =\phi(\boldsymbol{\lambda}_{2})$ and the result   from the positive definiteness of the quadratic form.  Also, for  $\iota = 1$  we have 
\begin{equation}\label{eq:qfbd2} \Re \phi(\boldsymbol{\lambda}_{1}) \geq
\tilde{c} \sum_{j=0}^{k-1}
\left | (\lambda^{j}_{1}-\hat{\lambda}^{j}_{1})- (\lambda^{j+1}_{1}-\hat{\lambda}^{j+1}_{1}) \right|^{2}\end{equation}
in each of the following regions:  
\begin{equation} \operatorname{reg}_{a}=\left\{\boldsymbol{\lambda}_{1}: \quad \forall j \,  \lambda_1^j \in L(\lambda_1, \lambda_1')\right\}~, \quad \operatorname{reg}_{b}=\left\{\boldsymbol{\lambda}_{1}: \quad \max_j |\lambda_1^j| \geq C_0\right\}~, \end{equation}
when $C_0 = C_0(k)$ is chosen to be sufficiently large.  For these regions, (\ref{eq:qfbd2}) follows   from the condition (2) on the slope.  To prove (\ref{eq:qfbd2}) for the remaining values of $\boldsymbol{\lambda}_{1},$
let $(x(t), y(t))_{t \in \mathbb R}$ be a parametrisation of $\Gamma'(\lambda_1, \lambda_1')$.
Then for $j=1,\dotsc k-1$
\begin{equation}\label{eq:der}
\begin{split}&\frac{\partial}{\partial t_j}  \Re \phi(x(t_1) + i y(t_1), x(t_2) + i y(t_2), \cdots) \\
&\qquad= (x(t_j) - \frac{x(t_{j+1}) + x(t_{j-1})}{2}) x'(t_j) -  (y(t_j) - \frac{y(t_{j+1}) + y(t_{j-1})}{2}) y'(t_j)~.\end{split}
\end{equation}
Taking into account condition (2),  we obtain that  (\ref{eq:der}) has a definite sign when  $\lambda_1^j$ lies outside the curvilinear interval containing $\lambda_1^{j-1}$ to $\lambda_1^{j+1}$: indeed, if, for example, $\lambda_1^j$ lies to the right of this interval, then 
\[\begin{split} &\arg \left[ (x(t_j) - \frac{x(t_{j+1}) + x(t_{j-1})}{2}) + i (y(t_j) - \frac{y(t_{j+1}) + y(t_{j-1})}{2})\right] \in (-\frac\pi4, \frac\pi4)~, \\
& \arg(x'(t_j) - i y'(t_j)) \in (-\frac\pi4, \frac\pi4)~,
\end{split}\]
whence $\eqref{eq:der}>0$. Therefore the minimum of $\phi(\boldsymbol \lambda_1)$ is attained when $\lambda_1^j$ lies between $\lambda_1^{j-1}$ and $\lambda_1^{j+1}$ on the contour. This is true for any $j$, hence the minimum of $\phi$ in the part of the contour defined by $\max_j |\lambda_1^j| \leq C_0$ is attained  when the coordinates $\lambda_1^j$ are ordered, and in particular all lie in $L(\lambda_1, \lambda_1')$.  Hence $\min_{\Gamma'(\lambda_1, \lambda_1')\setminus \operatorname{reg}_{a} }\Re \phi(\boldsymbol{\lambda}_{1})>c>0.$ 
This completes the proof of (\ref{eq:qfbd2}) and hence also of (\ref{eq:qfbd}).

Now split the integral into two pieces,
\[ J_1 = \int_{|\hat{\boldsymbol\lambda} - \boldsymbol \lambda| < W^{-2/5}}~, \quad
 J_2 = \int_{|\hat{\boldsymbol \lambda} - \boldsymbol \lambda| \geq W^{-2/5}}~.\] 
In $J_1$
 we approximate  $e^{-\sum (V(\lambda^j) - V(\hat\lambda_j))}$ by a linear function:
 \[ e^{-\sum_{j=1}^{k-1} V(\lambda^j)} =
 e^{-\sum_{j=1}^{k-1} V(\hat{\lambda}^j)} \left\{ 1 + \sum_{j=1}^{k-1} \sum_{\iota=1,2} \left( \partial_\iota V\right) (\hat{\lambda}^j) (\lambda^j_\iota - \hat\lambda^j_\iota)
 + \mc O(|\hat{\boldsymbol\lambda} - \boldsymbol \lambda|^{2})\right\} \]
 After replacing the left-hand side with  the right-hand side, we may extend the integral to the full straight line containing $L(\lambda_1, \lambda_1')$ (at the expense of adding a negligible term); then the constant term gives the asymptotics, the integral of the 
linear term vanishes by symmetry, and the error term is $\mc O(W^{-2})$. For $J_2$, we insert absolute values and use (\ref{eq:qfbd}).
\end{proof}

\begin{proof}[Proof of Proposition~\ref{p:contour}]
By Riesz--Thorin interpolation, it suffices to show that, for sufficiently large $k$,
\begin{align}
\|K_\Gamma^k\|_1, \|K_\Gamma^k\|_\infty &\leq 1 +\mc o(W^{-1})  \label{eq:bd1}\\
\|K_\Gamma^k\|_2 &\leq 1 - \frac{c}{W} \label{eq:bd2}~.
\end{align}
To prove (\ref{eq:bd1}) we recall (\ref{eq:opnorms}), which implies that
\[\max(\|K_\Gamma^k\|_1, \|K_\Gamma^k\|_\infty)=
\sup_\lambda \int_{\Gamma \times \mathbb{R}} |d \lambda'_1| |d\lambda'_2|  |K_\Gamma^k(\lambda, \lambda')|~. \]
Consider two cases. If $|\lambda| \leq 2 C_\Gamma$, we
split
\[ \int |d \lambda'_1| |d\lambda'_2| |K_\Gamma^k(\lambda, \lambda')| 
= \int_{|\lambda' - \lambda| < c_\Gamma/3} +  \int_{|\lambda' - \lambda| \geq c_\Gamma/3} 
= J_1 + J_2~.\]
According to Lemma~\ref{l:lapl},
\[\begin{split}
|J_1| &\leq  \max_{|\lambda' - \lambda| < c_\Gamma/ 3} \sqrt{\frac{|\lambda_1-\lambda_1'|^2}{\Re (\lambda_1 - \lambda_1')^2}} + \mc o(W^{-1}) \\
&\leq 
\begin{cases} 
1 + \mc o(W^{-1})~, &|\lambda_1|<2 c_\Gamma/3 \\
\cos^{-1/2} \left(\frac\pi2 (1-c_\Gamma)\right) \exp(-k \Re V_1(c_\Gamma/3)) +
\mc o(W^{-1}),&    |\lambda_1|\geq 2 c_\Gamma/3~.
\end{cases}\end{split}\]
Choosing $k$ sufficiently large we can ensure that
\[ J_1 \leq 1 + \mc o(W^{-1})~, \quad |\lambda|\leq 2 C_\Gamma~. \]
The bound obtained by taking absolute values and bounding  
$\Re V \geq 0$ in the definition of $K_\Gamma$ suffices to see that 
$J_2$ is exponentially small in $W^2$.

In the case $|\lambda| \geq 2 C_\Gamma$ we split
\begin{equation}\label{eq:faraway} \int |d \lambda'_1| |d\lambda'_2| |K_\Gamma^k(\lambda, \lambda')| 
= \int_{|\lambda' - \lambda| < C_\Gamma/2} +  \int_{|\lambda' - \lambda| \geq C_\Gamma/2}~.\end{equation}
Arguing as before, with the bounds for the non-deformed case in place of 
Lemma~\ref{l:lapl} for the first integral, we obtain:
that the left-hand side of (\ref{eq:faraway}) is less than one and, in, fact,
decays exponentially with $k$. This concludes the proof of (\ref{eq:bd1}).

\medskip To prove (\ref{eq:bd2}) we apply semiclassical reasoning as
in the proof of Proposition~\ref{specbound}. Let $\chi_1,\chi_2 $ be smooth bump functions in a small neighbourhood of the minima $0$, $(0,\sqrt{1-\frac{E^2}4}) $ of $\Re V $
(we make sure that the radius of the neighbourhood is $\leq c_\Gamma/10$).
Let $\chi_3 $ be such that $\chi_1^2+\chi_2^2 + \chi_3^2=1 $.  Then
\[\begin{split}
|K_\Gamma^k(\lambda, \lambda')| = \sum_{i=1,2,3} \chi_i(\lambda) |K_\Gamma^k(\lambda,\lambda')| \chi_i(\lambda') + \frac12
\sum_{i=1,2,3} |K_\Gamma^k(\lambda, \lambda')|[\chi_i(\lambda) - \chi_i(\lambda')]^2~.
\end{split}\]
The norm of the second term is $\mc O(W^{-2})$. Next,
\[\begin{split}
\left\Vert    \sum_{i=1,2,3} \chi_i |K_\Gamma^k| \chi_i  \right\Vert_2 
&= \sup_{\Vert \phi\Vert_2=1} \sum_{i=1,2,3} (\chi_i\phi, |K_\Gamma^k| \chi_i\phi)_\Gamma\\
&\leq  \max_i \Vert \mathbbm{1}_{\supp \chi_i} |K_\Gamma^k| \mathbbm{1}_{\supp \chi_i}\Vert_2\sup_{\Vert \phi\Vert_2=1} \sum_{i=1,2,3} \Vert \chi_i\phi\Vert_2^2 \\
&= \max_i \Vert \mathbbm{1}_{\supp \chi_i} |K_\Gamma^k| \mathbbm{1}_{\supp \chi_i}\Vert_2~.
\end{split}\]
For $i=1,2$ we use Lemma~\ref{l:lapl} and bound 
\[ \Vert \mathbbm{1}_{\supp \chi_i} |K_\Gamma^k| \mathbbm{1}_{\supp \chi_i}\Vert_2
\leq 1 - c/W\]
as in the proof of Proposition~\ref{specbound}. For $i=3$ we argue as in 
the proof of (\ref{eq:bd1}) above and show that 
\[ \Vert \mathbbm{1}_{\supp \chi_3} |K_\Gamma^k| \mathbbm{1}_{\supp \chi_3}\Vert_2
\leq c < 1\]
for sufficiently large $k$.
\end{proof}

\subsection{Proofs of the Theorems close to the edge}

Let $\Gamma$ be a contour as in Lemma~\ref{l:contour}. Define
an operation $\mc T_\Gamma$  on functions 
$\Gamma \times \mathbb{R} \to \mathbb{C}$ via the same formula
\[ \mc T_\Gamma = e^{-V - W^2 \lambda} \delta_0 + T_\Gamma~,
\quad T_\Gamma = e^{-V} \Lambda \delta_{W^2, \Gamma}^\star \Lambda^{-1}~, \]
where for example
\[
(\delta_{W^2, \Gamma}^\star f)(\lambda) =  \frac {W^2}{2\pi}  \int_{\Gamma \times \mathbb{R}}  e^{-\half W^2  (\lambda'-\lambda)^2 }  f(\lambda')  \ud\lambda_1'\ud\lambda_2 '~.
\]
By analyticity, the formul{\ae} of Corollary~\ref{corfinalrep} remain
valid with $\mc T_\Gamma$ in place of $\mc T$.
For the same reason, in $R$ coordinates all formulas still hold replacing the integration contour for the variable $a$ by $\Gamma.$

Let us show (cf.\ Remark~\ref{rem:allE}) that the conclusion of Proposition~\ref{prop:approx}  remains valid
for the deformed operator. Indeed, take $u_0^{(M)}$ from Proposition~\ref{prop:approx}.
and denote by $u_{0,\Gamma}^{(M)}$ its analytic
continuation on $\Gamma$. By condition (1) of Lemma~\ref{l:contour} the contour $\Gamma$ goes along the real
axis in the vicinity of the origin, whereas, by condition (2)  $u_{0,\Gamma}^{(M)}$ decays
away from the origin. Therefore $u_{0,\Gamma}^{(M)}$ boasts the same
properties as the approximate eigenfunction from Proposition~\ref{prop:approx}. In particular,  setting
$v_\Gamma^{(M)} := \mc T_\Gamma u_{0,\Gamma}^{(M)} - u_{0,\Gamma}^{(M)}$, $ \|v_\Gamma^{(M)}\|_p $ and $\|\Lambda^{-1} v_\Gamma^{(M)}\|_p$
enjoy the same bounds as  $ \|v^{(M)}\|_p $ and $\|\Lambda^{-1} v^{(M)}\|_p.$
By Proposition~\ref{p:contour}, one can choose $k$ so that
$\| (e^{-V/2} \delta^\star_{W^2,\Gamma}e^{-V/2})^k \|_p  \leq 1 - c_p W^{-1}$, hence
$\| (e^{-V} \delta^\star_{W^2,\Gamma})^n\|_p \leq C^{k} (1 - c_p W^{-1})^{\lfloor \frac{n-1}{k}\rfloor}$. Therefore Proposition~\ref{recon}  also
remains valid for $\mc T_\Gamma$, that is, 
 $u_{0,\Gamma}^{(M)}$ can be upgraded to an exact solution of
$\mc T_\Gamma u_\Gamma = u_\Gamma$ with the usual bound on the error $ u_\Gamma - u_{0,\Gamma}^{(M)} .$
From this point the argument proceeds as in Section~\ref{prf}.

%%%%%%%%%%%%%%%%%%%%%%%%%%%%%%%%%%%%%%%%%%%%%%%%%%%%%%
%%%%%%%%%%%%%%%%%%%%%%%%%%%%%%%%%%%%%%%%%%%%%%%%%%%%%%
\begin{appendix}
%%%%%%%%%%%%%%%%%%%%%%%%%%%%%%%%%%%%%%%%%%%%%%%%%%%%%%
%%%%%%%%%%%%%%%%%%%%%%%%%%%%%%%%%%%%%%%%%%%%%%%%%%%%%%
\section{Some additional useful identities}

Recall the definition of $d\mu_{z} (R):=  \ud R\,  e^{-\frac{z}{2 }\str R^{2} }$ from Section~\ref{sec:susyid1},
where  $z\in \mathbb{C},$ with $Re (z)>0.$
The following two lemmas extend Lemma \ref{le:someidentities} to a larger set of polynomials. Their proof
follows the same strategy as the one of Lemma \ref{le:someidentities}.
\begin{lemma}\label{le:someidentities2}
For any  $z\in \mathbb{C},$ with $Re (z)>0$ and supermatrix $R$ the following identities  hold:
\begin{align*}
(a)\qquad & \int d\mu_{z} (R')\ \str (R'+R)^{2} \str (R'+R) = \str R^{2} \str R+ \frac{2}{z}   \str R  \\
(b)\qquad  & \int d\mu_{z} (R')\ \str (R'+R)^{2} [\str (R'+R)]^{n} =  \str R^{2}  (\str R)^{n}+ \frac{2n }{z}   (\str R)^{n} \ \forall n\geq 1 \\
(c)\qquad &\int d\mu_{z} (R')\ [ \str (R'+R)^{2}]^{2}=  [\str R^{2}]^{2} +   \frac{4 }{z}   \str R^{2})\\
(d)\qquad & \int d\mu_{z} (R')\ \str (R'+R)^{3} \str (R'+R) =\str R^{3}  \str R + \frac{3}{z}   \str R^{2} \\
(e)\qquad & \int d\mu_{z} (R')\ \str (R'+R)^{5} = \str R^{5} +\frac{5 }{z}   \str R \str R^{2} +\frac{5 }{z}   \str R  \\
(f)\qquad  & \int d\mu_{z} (R')\ \str (R'+R)^{2} [\str (R'+R)]^{3} =
 \str R^{2} (\str R)^{3} + \frac{6}{z}  (\str R)^{3}\\
(g)\qquad  & \int d\mu_{z} (R')\  \str (R'+R)^{2} [\str ( R'+R)]^{4}= \str R^{2}  (\str R)^{4} 
+  \frac{8}{z}  (\str R)^{4}\\
\end{align*}
\end{lemma}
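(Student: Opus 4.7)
My plan is to follow exactly the same template as the proof of Lemma~\ref{le:someidentities}: reduce to diagonal $R$ by rotation invariance, expand each integrand explicitly in the components of $R'$, and evaluate the resulting Gaussian moments. By superunitary invariance of $d\mu_z$ together with continuity in $(a,b)$ (the same continuity argument used at the start of the proof of Lemma~\ref{le:someidentities}), I may assume $R=\mbox{diag}(\lambda_1,i\lambda_2)$. Writing $\alpha=a'+\lambda_1$, $\beta=b'+\lambda_2$ and applying the polar decomposition \eqref{eqpolar} to $R'+R$, one obtains the clean identity
\[
\str(R'+R)^n=\alpha^n-(i\beta)^n+n\Big(\sum_{j=0}^{n-2}\alpha^{n-2-j}(i\beta)^j\Big)\bar\rho'\rho'\qquad(n\ge 1),
\]
together with $\str(R'+R)=\alpha-i\beta$. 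This reduces every identity to an integral of a polynomial in $(a',b',\bar\rho',\rho')$ against $d\mu_z$, which I would evaluate using the elementary moments $\int d\mu_z(a')^{2k}=(2k-1)!!/z^k$ (same for $b'$; odd moments vanish) and $\int d\mu_z\bar\rho'\rho'=-1/z$, the latter following as in Lemma~\ref{le:someidentities}(b) from the localisation identity $\int d\mu_z\str R'^2=0$, together with the factorisation of the measure over the independent directions $a'$, $b'$, $(\bar\rho',\rho')$.

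Identities (a), (b), (c), (f), (g), all of which involve only $\str(R'+R)^2$ and powers of $\str(R'+R)$, admit a uniform treatment through a supersymmetric Gaussian generating function. Completing the square in $(a',b')$ and using the crucial cancellation $\str\mathbbm{1}_2=0$ (which is what makes the bosonic Gaussian prefactor $z/(z-2t)$ cancel against the Grassmann one $(z-2t)/z$), I would establish, for $\Re(z-2t)>0$,
\[
F(t,\mu):=\int d\mu_z(R')\,e^{t\str(R'+R)^2+\mu\str(R'+R)}=\exp\!\left(\frac{z(t\str R^2+\mu\str R)}{z-2t}\right).
\]
Identities (a), (b), (f), (g) then follow from the elementary computation $\partial_t\partial_\mu^n F|_{(0,0)}=(\str R)^n\str R^2+(2n/z)(\str R)^n$ for $n=1,1,3,4$, and (c) from $\partial_t^2 F|_{(0,0)}=(\str R^2)^2+(4/z)\str R^2$.

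Identities (d) and (e), which involve $\str(R'+R)^3$ and $\str(R'+R)^5$, are not captured by the quadratic generating function $F(t,\mu)$, so I would prove them by direct expansion via the polar formula above, followed by termwise integration using the moments recalled in the first paragraph. The bosonic parts $\alpha^k-(i\beta)^k$ produce the top supertrace terms; the Grassmann cross terms, multiplied by $\int d\mu_z\bar\rho'\rho'=-1/z$, produce the lower-order corrections in $1/z$. The main (mild) obstacle is the bookkeeping at the end: after the Gaussian moments are collected, one must recognise a polynomial in $\lambda_1,\lambda_2$ as a specific combination of supertraces, e.g.\ the factorisation $\lambda_1^4+\lambda_2^4+i\lambda_1\lambda_2(\lambda_2^2-\lambda_1^2)=\str R^3\cdot\str R$ for (d), and the analogous $\lambda_1^3+\lambda_1\lambda_2^2-i\lambda_1^2\lambda_2-i\lambda_2^3=\str R\cdot\str R^2$ that appears when reducing (e). Both factorisations are of the same flavour as the one already used in parts (c)--(d) of Lemma~\ref{le:someidentities}, and the remaining arithmetic is entirely mechanical.
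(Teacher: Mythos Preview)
Your proposal is correct. The paper's own proof is simply ``the same strategy as the proof of Lemma~\ref{le:someidentities}'', i.e.\ reduce to diagonal $R$, expand termwise in the components of $R'$, and compute the Gaussian/Grassmann moments. You follow this verbatim for (d) and (e), and for (a), (b), (c), (f), (g) you instead package the computation into the generating function
\[
F(t,\mu)=\int d\mu_z(R')\,e^{t\,\str(R'+R)^2+\mu\,\str(R'+R)}=\exp\!\left(\frac{z(t\,\str R^2+\mu\,\str R)}{z-2t}\right),
\]
whose derivation you outline correctly (the key point being that the bosonic normalisation $2\pi/(z-2t)$ cancels against the Grassmann one $(z-2t)/(2\pi)$, a manifestation of $\str\mathbbm{1}_2=0$). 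This is a genuine, if modest, improvement over the paper's case-by-case expansion: the single formula $\partial_t\partial_\mu^n F|_{(0,0)}=(\str R)^n\str R^2+(2n/z)(\str R)^n$ yields (b) for all $n\ge1$ at once (hence (a), (f), (g) as special cases), and $\partial_t^2 F|_{(0,0)}$ gives (c) without any bookkeeping. The paper's approach has the advantage of treating all seven identities uniformly and of extending mechanically to the higher cases in Lemma~\ref{le:someidentities3}, whereas your generating function only captures integrands built from $\str(R'+R)^2$ and $\str(R'+R)$; this is why you still need direct expansion for (d) and (e). One small slip: in your list ``$n=1,1,3,4$'' the second entry should presumably read ``general $n\ge1$'' for (b).
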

\begin{lemma}\label{le:someidentities3}
Let $\mu = W/(W+2\alpha)$ and set $W\gg 1$. For any supermatrix $R$ the following identities  hold:
\begin{align*}
(a) & \int d\mu_{\frac{W^{2}}{\mu }} (R') \str (R'+R)^{2} \str (R'+R)^{3} = \str R^{2}  \str R^{3}
+  \frac{6\mu }{W^{2}}  \str R \str R^{2} + \mc{O}(W^{-4}R)  \\
(b) & \int d\mu_{\frac{W^{2}}{\mu }} (R') \str (R'+R)^{3} [\str (R'+R)]^{2} = \str R^{3}  (\str R)^{2}
+  \frac{6\mu  }{W^{2}}  \str R^{3}+ \mc{O}(W^{-4}R) \\
\\
(c)  & \int  d\mu_{\frac{W^{2}}{\mu }} (R')\ 
W \str (R'+R)^{3} \str ( R'+R)^{4}=\\
&\  
W\str R^{3}  \str R^{4} +  \frac{2\mu }{W}  \str R^{3} (\str R)^{2} + \frac{12 \mu }{W}  \str R^{5} 
+ \mc{O}(W^{-5}R)+ \mc{O}( W^{-3}R^{3})\\
(d)\  & \int d\mu_{\frac{W^{2}}{\mu }} (R')\ W^{2}\str (R'+W^{-\frac{1}{2}}R)^{3}]^{3} =\\
&\  
W^{2}  [\str (W^{-\frac{1}{2}}R)^{3}]^{3} +3^{3}\mu  \str (W^{-\frac{1}{2}}R)^{3}  \str (W^{-\frac{1}{2}}R)^{4}+ \mc{O}(W^{-2}\sqrt{W}^{-5})
\end{align*}
\end{lemma}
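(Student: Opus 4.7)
The plan is to follow the same template used for Lemma~\ref{le:someidentities} and Lemma~\ref{le:someidentities2}, but this time keeping track of the sub-leading terms in the expansion in powers of $W^{-1}$. First, by superunitary rotation invariance of the measure $d\mu_z$ (the analogue of \eqref{eq:inv} for the Gaussian weight) together with the rotation invariance of $\str$, and since both sides of each identity are polynomials in the entries of $R$, it suffices to establish the identities for the diagonal supermatrix $R = \mathrm{diag}(\lambda_{1},i\lambda_{2})$. The remaining case $(a,b,\rho,\bar\rho)\neq 0$ with nontrivial Grassmann part is recovered by polynomial continuity, exactly as in the proof of Lemma~\ref{le:someidentities}.

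Next, I would expand each product of supertraces by multilinearity and the binomial expansion $R'+R$, grouping the terms by the total number of $R'$ factors. Under $R'\to -R'$ the measure $d\mu_{W^2/\mu}$ is invariant, so only terms of even total degree in $R'$ survive. The surviving terms are evaluated via Gaussian contractions with variance $\mu/W^{2}$. In polar/diagonal form they reduce to the elementary integrals
\[
\int d\mu_{z}(R')\, a'^{2} = \int d\mu_{z}(R')\, b'^{2} = \int d\mu_{z}(R')\, 2\bar\rho'\rho' = \tfrac{1}{z}~,
\]
together with the basic vanishing identities  $\int d\mu_{z}(R')\,\str (R')^{n} = \str 0 = 0$ for $n\geq 1$ and $\int d\mu_{z}(R')\,(\str R')^{n} = 0$ used repeatedly in Lemmas~\ref{le:someidentities}--\ref{le:someidentities2}. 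The leading term (no $R'$) reproduces the $R$-only piece on the right-hand side; the two-$R'$ contractions produce the order-$\mu/W^{2}$ correction; and four-or-more $R'$ contractions are absorbed into $\mc O(W^{-4})$ remainders (with the precise $R$-dependence stated, which is determined by Weyl-type power-counting on the residual $R$-dependence of each surviving monomial).

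For instance, in (a) one writes $\str(R'+R)^{2}\str(R'+R)^{3}$ as a sum of 12 terms; parity kills the 6 odd terms, and among the 6 even ones, those with zero copies of $R'$ give $\str R^{2}\str R^{3}$; those with two copies reduce, after contraction, to the known integrals $\int d\mu_{z}\,\str(R')^{2}\str(R R') $-type expressions worked out in Lemma~\ref{le:someidentities2}(a)(d), which together reconstruct the coefficient $6\mu/W^{2}$ of $\str R\,\str R^{2}$; four-copy contractions produce $\mc O((\mu/W^{2})^{2}R) = \mc O(W^{-4}R)$. Identities (b) and (c) are handled in the same way, with the quadratic/quartic $R$ part of the correction traced to contractions of the form $(\str R')^{2}\mapsto 0$ versus $\str (R')^{2}\mapsto 0$ versus $[\str(R R')]^{2}\mapsto (\str R)^{2}/z$, exactly as in Lemma~\ref{le:someidentities}(c)(d).

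The main obstacle is the combinatorial bookkeeping in (d), which involves the third power $[\str(R')^{3}]^{3}$ after expansion. Because three cubic factors produce a large number of pairings, and because $\str(R')^{3}$ contains both the bosonic monomials $a'^{3},(ib')^{3}$ and the mixed bosonic--fermionic term $3(a'+ib')\bar\rho'\rho'$, care is required to identify which contractions yield $\str R^{3}\str R^{4}$ (at order $\mu/W^{2}$ after taking into account the prefactor $W^{2}$ and the $W^{-1/2}$ rescaling of $R$) and which are absorbed into the residual $\mc O(W^{-2}W^{-5/2})$. Once the non-vanishing pairings are enumerated, each of them is computed by the same elementary rules as in Lemma~\ref{le:someidentities}(d), and the coefficient $3^{3}\mu$ emerges from the $\binom{3}{1}\binom{3}{1}\binom{3}{1}$ ways of pairing one $R'$ from each of the three $\str(R')^{3}$ factors with the $R$'s of the remaining two such factors. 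No step beyond elementary Gaussian moment computation and careful Grassmann parity tracking is needed.
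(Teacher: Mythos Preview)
Your approach is exactly what the paper does: it states that Lemmas~\ref{le:someidentities2} and~\ref{le:someidentities3} ``follow the same strategy as the one of Lemma~\ref{le:someidentities}'', and gives no further detail. Your outline (reduce to diagonal $R$, expand $(R'+R)$, kill odd-degree terms by parity, evaluate the surviving Gaussian contractions with variance $\mu/W^{2}$, and absorb four-or-more-$R'$ terms into the $\mc O$ remainder) is precisely that strategy.

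One small correction to your explanation of~(d): the combinatorial origin you give for $3^{3}$ cannot be right, since selecting the linear-in-$R'$ term $3\,\str R'R^{2}$ from each of the three factors $\str(R'+R)^{3}$ produces $(3\,\str R'R^{2})^{3}$, which is odd in $R'$ and vanishes by parity. The coefficient $27$ actually arises as $3\times 9$: writing $\str(R'+R)^{3}=\str R^{3}+X$ and expanding $[\str R^{3}+X]^{3}$, the term $3\,[\str R^{3}]\,X^{2}$ contributes, and $\int d\mu_{z}X^{2}=\tfrac{9}{z}\str R^{4}+\tfrac{9}{z^{2}}(\str R)^{2}$ is already computed in Lemma~\ref{le:someidentities}(d). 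The $\tfrac{9}{z^{2}}$ piece and all contributions from $X^{3}$ are $\mc O(z^{-2})$ and land in the stated remainder.
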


\end{appendix}

\vspace{15pt}\noindent
\textbf{Acknowledgement.} M.D. is supported in part by the DFG via CRC 1060, and acknowledges the hospitality of the Institute for Advanced Study in Princeton and the Newton Institute in Cambridge, where part of this work has been carried out. M.L. is partially supported by NSERC. S.S. is supported in part by the European Research Council starting grant 639305 (SPECTRUM) and by a Royal Society Wolfson Research Merit Award.

\renewcommand*{\bibfont}{\footnotesize}

\printbibliography

\end{document}